\documentclass[reqno]{amsart}
\usepackage[margin=1.15in]{geometry}
\usepackage{amsmath,amssymb,mathtools,bm}
\usepackage{microtype}
\usepackage{hyperref}
\usepackage{cite}
\usepackage{enumitem}

\theoremstyle{plain}
\newtheorem{theorem}{Theorem}[section]
\newtheorem{proposition}[theorem]{Proposition}

\newtheorem{lemma}[theorem]{Lemma}

\theoremstyle{definition}

\theoremstyle{remark}
\newtheorem{remark}[theorem]{Remark}

\numberwithin{equation}{section}
\allowdisplaybreaks

\begin{document}

\author{Phuong Le}
\address{Phuong Le$^{1,2}$ (ORCID: 0000-0003-4724-7118)\newline
	$^1$Faculty of Economic Mathematics, University of Economics and Law, Ho Chi Minh City, Vietnam; \newline
	$^2$Vietnam National University, Ho Chi Minh City, Vietnam}
\email{phuongl@uel.edu.vn}

\subjclass[2020]{35J92, 35B08, 35B33, 35B40, 35B53}
\keywords{Anisotropic $p$-Laplacian, Finsler $p$-Laplacian, critical Sobolev exponent, Liouville theorem, Harnack inequality, isolated singularities, Pohozaev identity}

\title[Critical anisotropic $p$-Laplace equations without finite energy]{Critical anisotropic $p$-Laplace equations without finite energy: classification and a Schoen-type Harnack inequality}

\begin{abstract}
Let $N\ge 2$ and $1<p<N$. Let $H$ be a possibly non-reversible Minkowski norm on $\mathbb{R}^N$, with $H\in C^2(\mathbb{R}^N\setminus\{0\})$ and $D^2(H^2/2)$ uniformly positive definite away from the origin. We classify all nonnegative local weak solutions $u\in W^{1,p}_{\rm loc}(\mathbb{R}^N)$ of
\[
-\operatorname{div}\big(H(\nabla u)^{p-1}\nabla H(\nabla u)\big)=u^{p^*-1},\qquad p^*=\frac{Np}{N-p},
\]
without any finite-energy, integrability, or asymptotic assumption. Every nonzero solution is an anisotropic Sobolev bubble. This settles, for the full range $1<p<N$, the anisotropic analogue of Problem A of Antonini--Ciraolo--Gatti, whose Euclidean case was settled by Zhang, and removes the restriction on $p$ in the non-energy anisotropic classification of Chen--Wu--Yan--Yang. We also prove a scale-invariant Schoen-type Harnack inequality for local solutions.

The proof follows the strategy of Zhang for the Euclidean $p$-Laplacian, combining rigidity of bounded blow-up profiles and a first-contact argument with the integral bootstrap of Ciraolo and Gatti. The anisotropic analysis requires two additional ingredients. First, we establish a Bochner identity for the conjugated stress tensor in distributional form on the noncritical set, using only $C^2$ regularity of $H$. Second, we prove a two-term expansion at isolated singularities of Finsler $p$-harmonic functions, including the gradient control needed by the Pohozaev invariant. For non-reversible anisotropies the argument is formulated with the reflected norm. A Morrey-type estimate proved directly for the relevant Leray--Lions operator replaces the potential-theoretic input used in the Euclidean argument.
\end{abstract}

\maketitle

\section{Introduction}\label{sec_intro}

\subsection{The problem}

Let $N\ge 2$ and $1<p<N$. Throughout, by a \emph{norm} we mean a convex, positively one-homogeneous Minkowski norm; reversibility, namely $H(-\xi)=H(\xi)$, is not assumed. Let $H:\mathbb{R}^N\to[0,\infty)$ be such a norm and let
\[
H_0(x)=\sup_{H(\xi)\le 1}\xi\cdot x
\]
be its dual norm. We consider
\begin{equation}\label{eq_main}
-\Delta_p^H u:=-\operatorname{div}A(\nabla u)=u^{p^*-1}\quad\text{in }\mathbb{R}^N,\qquad u\ge 0,\qquad p^*=\frac{Np}{N-p},
\end{equation}
where
\[
A(\xi)=\nabla_\xi\Big(\frac{H(\xi)^p}{p}\Big)=H(\xi)^{p-1}\nabla H(\xi)\quad(\xi\neq 0),\qquad A(0)=0.
\]
Throughout the paper $H$ satisfies the following hypothesis, which is the one used by Ciraolo, Figalli and Roncoroni \cite{CFR20}.

\smallskip
\noindent\textbf{Hypothesis (H).} $H\in C^2(\mathbb{R}^N\setminus\{0\})$, and there are constants $0<\lambda\le\Lambda$ such that
\begin{equation}\label{eq_unif_convex}
\lambda\,\mathrm{Id}\le H(\xi)D^2H(\xi)+\nabla H(\xi)\otimes\nabla H(\xi)\le\Lambda\,\mathrm{Id}\qquad\text{for all }\xi\neq 0.
\end{equation}

\smallskip
We write $\hat H(\xi)=H(-\xi)$ for the reflected norm. Hypothesis (H) holds for $\hat H$ with the same constants, because $\hat HD^2\hat H+\nabla\hat H\otimes\nabla\hat H$ at $\xi$ equals $HD^2H+\nabla H\otimes\nabla H$ at $-\xi$. The dual norm of $\hat H$ is $\hat H_0(x)=\sup_{H(-\xi)\le 1}\xi\cdot x=H_0(-x)$. When $H$ is even, $\hat H=H$ and $\hat H_0=H_0$.

A \emph{local weak solution} of \eqref{eq_main} is a function $u\in W^{1,p}_{\rm loc}(\mathbb{R}^N)$ with $u\ge 0$ and
\[
\int_{\mathbb{R}^N}A(\nabla u)\cdot\nabla\varphi\,dx=\int_{\mathbb{R}^N}u^{p^*-1}\varphi\,dx\qquad\text{for all }\varphi\in C^\infty_c(\mathbb{R}^N).
\]
The right-hand side is finite because $u\in L^{p^*}_{\rm loc}(\mathbb{R}^N)$ by the Sobolev embedding. No global condition is imposed.

Set
\[
a=\frac{N-p}{p},\qquad b=\frac{N(p-1)}{p},\qquad c=a^{1-p},\qquad \alpha=\frac{N-p}{p-1},
\]
\[
p'=\frac{p}{p-1},\qquad K=\frac{1}{p'}\Big(\frac{c}{N}\Big)^{1/(p-1)}.
\]
The \emph{normalized bubble} is
\begin{equation}\label{eq_bubble}
U(x)=\big(1+K\,\hat H_0(x)^{p'}\big)^{-a}=\big(1+K\,H_0(-x)^{p'}\big)^{-a},
\end{equation}
and the \emph{bubbles} are the functions
\begin{equation}\label{eq_bubbles}
U_{z,\lambda}(x)=\lambda^{-a}\,U\Big(\frac{x-z}{\lambda}\Big),\qquad z\in\mathbb{R}^N,\ \lambda>0.
\end{equation}
They solve \eqref{eq_main} (Lemma \ref{lem_bubble}), and up to the choice of parameters they are the functions (1.6) of \cite{CFR20}.

When $H$ is the Euclidean norm, \eqref{eq_main} is the critical $p$-Laplace equation. For $p=2$ the positive solutions were classified by Caffarelli, Gidas and Spruck \cite{CGS89}. For general $p$ the classification of solutions in $D^{1,p}(\mathbb{R}^N)$ is due to Damascelli, Merch\'an, Montoro and Sciunzi \cite{DMMS14}, V\'etois \cite{Vet16} and Sciunzi \cite{Sci16}, by the method of moving planes. Since moving planes use the symmetries of the Euclidean norm, the anisotropic problem required a different approach. Ciraolo, Figalli and Roncoroni \cite{CFR20} found one, based on integral identities, and proved that every positive solution of \eqref{eq_main} in $D^{1,p}(\mathbb{R}^N)$ is a bubble. Antonini, Ciraolo and Gatti \cite{ACG26} recently gave a $P$-function proof together with decomposition and stability results, still in the finite-energy class.

Solutions which are only locally in $W^{1,p}$ are a different matter, because the integral identities of \cite{CFR20} require information at infinity. In the Euclidean case, Catino, Monticelli and Roncoroni \cite{CMR23} obtained the classification without finite energy under restrictions on $N$ and $p$, and Ou \cite{Ou25} extended the classification without further assumptions to exponents $p$ above $(N+1)/3$ (see also V\'etois \cite{Vet24}). Ciraolo and Gatti \cite{CG25} subsequently treated bounded or moderately growing solutions, while Zhang \cite{Zha26} established the unrestricted Euclidean classification, for nonlinearities $u^{p^*-1}h(u)$ with $h$ nonincreasing, for the full range $1<p<N$ together with a scale-invariant Schoen-type Harnack estimate.

For the anisotropic critical equation, Chen--Wu--Yan--Yang \cite{CWYY25} removed the finite-volume condition for $N\ge 3$ and a restricted range of $p$. In particular, their result does not cover the full interval $1<p<N$. Chen, Dai, Gui and Luo \cite{CDGL26} developed non-energy Liouville theory in convex cones for anisotropic subcritical equations, whereas their critical classification is stated for the Euclidean norm and $p>(N+1)/3$. The recent work of Antonini, Ciraolo and Gatti \cite{ACG26} treats nonnegative energy solutions for all $1<p<N$ and explicitly highlights the remaining gap between the energy and local-weak-solution theories \cite[Problem A]{ACG26}. For the Euclidean norm this gap was closed by Zhang \cite{Zha26}. The purpose of the present paper is to close it for every norm satisfying Hypothesis (H), even or not.

\subsection{Why the known arguments do not apply directly}\label{subsec_why}

Four points of the Euclidean argument depend on the Euclidean norm or on its symmetry, and each requires a new argument.

\emph{The pointwise rigidity step.} After the change of variables $w=u^{-p/(N-p)}$, rigidity comes from the sign of $\operatorname{tr}(E^2)$, where $E$ is the trace-free part of the Jacobian $DX$ of the stress field $X$ of $w$. For the anisotropic operator $DX=\mathcal{A}D^2w$, where $\mathcal{A}$ is the Hessian of $\hat H^p/p$ at $\nabla w$ (of $H^p/p$ when $H$ is even), and $E$ is not symmetric. An argument which bounds $\operatorname{tr}(E^2)$ from below by a multiple of the Frobenius norm $|E|^2$ loses a factor comparable to the condition number of $\mathcal{A}$, and this factor is not close to one when the anisotropy is large (Remark \ref{rem_frobenius}). The remedy, already used by Zhang \cite{Zha26} for the $p$-Laplacian, is the similarity $T=\mathcal{A}^{-1/2}E\mathcal{A}^{1/2}$, which is symmetric and trace-free. We observe that nothing in this device uses the Euclidean norm. The Kato inequality and the Bochner identity then hold with dimensional constants only (Lemma \ref{lem_algebra}, Proposition \ref{prop_bochner}).

\emph{Weak regularity.} The Bochner identity involves third derivatives of $w$. Under Hypothesis (H) the field $A$ is only $C^1$ away from the origin. Local $W^{1,2}$ regularity of the stress field is known from the work of Antonini, Ciraolo and Farina \cite{ACF23}, and Zhang uses such regularity in the Euclidean case. We give a self-contained argument that does not need it. The multiplier argument and the rigidity argument only use the identity on the regular set $\{\nabla w\neq 0\}$, where $w\in W^{2,2}_{\rm loc}$ and the identity holds in the sense of distributions. The proof differentiates $A$ only once: it uses continuity and ellipticity of $D A$ on compact subsets of $\mathbb R^N\setminus\{0\}$, but never differentiates $D A$. Thus $H\in C^2(\mathbb R^N\setminus\{0\})$ is enough. The critical set is handled at the end by an elementary topological argument (Step 5 in the proof of Proposition \ref{prop_profile}), rather than by a global second-order regularity theorem for the stress field.

\emph{Isolated singularities.} The first-contact argument blows down the solution at the radius where it first exceeds a multiple of the bubble. The limit has an isolated singularity at the origin, and the Pohozaev identity forces its constant term to vanish. This needs an \emph{additive} expansion $W=\kappa\hat H_0^{-\alpha}+\beta+o(1)$, together with control of the gradient of the remainder, for positive solutions of $\Delta_p^HW=0$ in a punctured Wulff ball. A quotient asymptotic $W/\hat H_0^{-\alpha}\to\kappa$, which for the Finsler $p$-Laplacian follows from the work of Hou \cite{Hou26}, leaves an error $o(\hat H_0^{-\alpha})$ that the Pohozaev identity cannot see. Theorem \ref{thm_pole} supplies the missing expansion. Its proof is self-contained and uses a flux-matching blow-up in place of the isolated-singularity theory.

\emph{Norms which are not even.} When $H$ is not even, $A$ is not odd, and the change of variables $w=u^{-p/(N-p)}$ reverses the direction of the gradient. The transformed function then solves the transformed equation for the reflected norm $\hat H$ (Lemma \ref{lem_transformed}). Accordingly the rigidity argument of Sections \ref{sec_bochner}--\ref{sec_profile} is carried out for $\hat H$, while the bubble and the fundamental solution are radial with respect to $\hat H_0(x)=H_0(-x)$. The Kilpel\"ainen--Mal\'y potential estimate \cite{KM94}, in the form used in the Euclidean argument, is formulated for operators with the real homogeneity $A(t\xi)=|t|^{p-2}tA(\xi)$ for all $t\in\mathbb{R}$, which is unavailable for a non-reversible anisotropy. The first-contact argument needs only a Morrey-type consequence of that estimate, and we prove this consequence directly for the present Leray--Lions operator (Lemma \ref{lem_morrey}).

\emph{Relation to earlier work.} The overall architecture, namely rigidity of bounded normalized profiles, a Schoen-type estimate proved by point selection and first contact, and a final integrability argument, is that of Zhang \cite{Zha26}. The change of variables $w=u^{-p/(N-p)}$ is the one used in \cite{CFR20,Ou25,CG25,Zha26}, the multiplier of Section \ref{sec_Pbound} is the pure-power case of the multiplier of \cite{Zha26}, and the integral bootstrap of Section \ref{sec_global} adapts the argument of Ciraolo and Gatti \cite{CG25}. The new ingredients are the weak Bochner identity on the noncritical set, the topological treatment of the critical set, and the two-term expansion of Theorem \ref{thm_pole}. The passage to the reflected norm follows \cite[Section 3]{CFR20}. Lemma \ref{lem_morrey} is a direct proof, without oddness, of a standard consequence of \cite{KM94}. No result of \cite{Zha26} is used as an input. Every step adapted from \cite{Zha26} is proved here in the anisotropic setting.

\subsection{Main results}

The main result is the following classification.

\begin{theorem}\label{thm_main}
Let $N\ge 2$, $1<p<N$, and let $H$ satisfy Hypothesis \textup{(H)}. If $u$ is a local weak solution of \eqref{eq_main}, then either $u\equiv 0$ or $u=U_{z,\lambda}$ for some $z\in\mathbb{R}^N$ and $\lambda>0$.
\end{theorem}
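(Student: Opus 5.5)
The plan follows the architecture sketched in Section~\ref{subsec_why}: first a scale-invariant Schoen-type Harnack bound, then an integral bootstrap that upgrades this bound to the finite-energy (or moderate-growth) class, where rigidity applies. Concretely, I would proceed in four steps.

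\emph{Step 1 (rigidity of bounded profiles).} Let $u>0$ be a local solution. By the strong maximum principle either $u\equiv0$ or $u>0$, and $C^{1,\beta}_{\rm loc}$ regularity holds. Set $w=u^{-p/(N-p)}$. By Lemma~\ref{lem_transformed}, $w$ solves the transformed equation for $\hat H$. Assume in addition that $u$ is bounded, so that $w$ is bounded below. On the noncritical set $\{\nabla w\ne0\}$ I apply the weak Bochner identity of Proposition~\ref{prop_bochner}, with the symmetrized trace-free tensor $T=\mathcal A^{-1/2}E\mathcal A^{1/2}$ and the Kato inequality of Lemma~\ref{lem_algebra}. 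Testing against a suitable power of $w$ times a cutoff (the multiplier of Section~\ref{sec_Pbound}) shows that the $P$-function is bounded and that $\int T^2$ vanishes. Then $D\hat X$ is a multiple of the identity on the regular set, which forces $\hat H_0$-radial quadratic behaviour of $w^{p'}$-type quantities there. Step~5 of Proposition~\ref{prop_profile} extends this across the critical set by a topological argument. The conclusion is that bounded solutions are bubbles.

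\emph{Step 2 (Schoen-type Harnack estimate).} Next I prove $u(x)\,u(0)\,|x|^{N-p}\le C$, or more precisely $\sup_{B_R}u\cdot\inf_{B_R}u\le CR^{-(N-p)}$, for all solutions. I argue by contradiction with point selection: choose points where $u$ is large relative to the distance to a boundary and rescale. Since the Harnack inequality and $C^{1,\beta}$ estimates give compactness, the blow-up limits are bounded entire solutions, hence bubbles by Step~1. I then compare $u$ with the bubble and move outward to the first radius at which $u$ exceeds a multiple of the bubble (first contact). Blowing down at that radius produces a positive $\hat H$-$p$-harmonic-like limit with an isolated singularity at the origin. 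Theorem~\ref{thm_pole} gives $W=\kappa\hat H_0^{-\alpha}+\beta+o(1)$ with gradient control. The Pohozaev identity on small Wulff annuli then forces $\beta=0$, which contradicts the first-contact configuration. Lemma~\ref{lem_morrey} supplies the Morrey-type decay needed to pass to the limit in the right-hand side, replacing the Kilpel\"ainen--Mal\'y estimate.

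\emph{Step 3 (global integrability).} The Harnack estimate implies $u(x)\lesssim |x|^{-(N-p)/p}$-type control on average, together with comparable sup and inf on dyadic annuli. I feed this into the Ciraolo--Gatti bootstrap of Section~\ref{sec_global}. Testing the equation with $u$ times cutoffs and iterating, I upgrade the decay to $u\lesssim |x|^{-\alpha}$ and $u\in D^{1,p}(\mathbb R^N)$. The classification of \cite{CFR20} in the energy class then concludes; alternatively, boundedness lets Step~1 apply directly.

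\emph{Main obstacle.} I expect Step~2, and specifically the additive two-term expansion with gradient control at the isolated singularity, to be the hardest part. Once Theorem~\ref{thm_pole} is available the remainder is bookkeeping, but the passage from the contact configuration to a genuine Pohozaev contradiction with non-reversible $H$ must be handled carefully, using $\hat H_0$ throughout.
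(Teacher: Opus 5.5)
Your architecture is the paper's: profile rigidity; a Schoen bound via point selection, first contact, Theorem~\ref{thm_pole}, Pohozaev and Lemma~\ref{lem_morrey}; then the Ciraolo--Gatti bootstrap and the energy classification. However, Step~1 is not correct as formulated, and Steps~2--3 depend on how it is fixed. The Bochner/pressure machinery does not show that bounded solutions are bubbles, and it is not an integral argument. Integrating $\mathcal{L}P=N(p-1)w^{1-N}|T|^2$ against cutoffs leaves terms at infinity that a merely bounded solution does not control. That is exactly why earlier integral approaches needed restrictions on $p$.

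What is provable is Theorem~\ref{thm_profile}, under the normalization $0<V\le 1=V(0)$, and both halves of the normalization are used:
\begin{itemize}
\item The bound $V\le 1$ gives the \emph{sharp} pointwise bound $P\le c$, not mere boundedness of $P$. This comes from a maximum principle for $M=P\,\phi(w)$, with translation-compactness when $\sup M$ is not attained.
\item The attained maximum supplies a point where $w=1$, $\nabla w=0$, and hence $P=c$. There $\mathcal{L}$ degenerates. The paper uses Lemma~\ref{lem_tangent} and the barrier $(w-1)^\sigma$ to show that $\Pi=c-P$ vanishes near that point, then propagates $\Pi\equiv 0$ by the strong maximum principle. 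Only then does Bochner give $T=0$.
\end{itemize}
Without an attained maximum equal to the bound, nothing forces $P$ to reach $c$: a rescaled bubble with maximum $\lambda^{-a}<1$ has $P\equiv c/\lambda$.

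Consequently, in Step~2 your selection must produce limits with $V\le 1=V(0)$, not just $V\le C$. The paper maximizes $d_j^{N-p}u_j$ and uses $v_j\le(1-\rho/(8\Gamma_j))^{-(N-p)}$ on $\{H_0\le\rho\}$; a doubling-type selection would not do. Likewise, your alternative ending (``boundedness lets Step~1 apply directly'') fails as stated. It becomes correct once $u\in L^{p^*-1}$ is known. Since $u$ is bounded (Lemma~\ref{lem_decay}) and uniformly $C^1$, $u\in L^{p^*-1}$ forces $u\to 0$ at infinity, so the maximum is attained. Normalizing there, Theorem~\ref{thm_profile} gives $u=U_{z,\lambda}$, which even makes the appeal to the energy classification unnecessary.

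Three further points would break the argument as written.
\begin{itemize}
\item Your Schoen inequality $\sup_{B_R}u\cdot\inf_{B_R}u\le CR^{-(N-p)}$ is the $p=2$ form. For $p\neq 2$ it is not invariant under $u\mapsto\lambda^{-a}u(\cdot/\lambda)$, and for $p>2$ the bubbles $U_{0,\lambda}$ with $\lambda\to 0$ violate it. The correct estimate is $\big(\sup_{\mathcal{W}_R}u\big)\big(\inf_{\mathcal{W}_{2R}}u\big)^{p-1}\le CR^{p-N}$. Combined with $u\ge m_1\mu$ outside $\hat{\mathcal{W}}_1$ and $\alpha(p-1)=N-p$, the power $p-1$ is what gives boundedness of $u$ and $\min_{\overline{\mathcal{W}_R}}u\simeq R^{-\alpha}$, the actual inputs of the bootstrap. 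Your exponent $(N-p)/p$ is not the relevant one.
\item At first contact, $\beta=0$ is not itself contradictory. You still need the blow-down flux $-M_U$ to apply Theorem~\ref{thm_pole} and to fix $\kappa=\kappa_U$. You then need $W\ge\kappa_U\mu$ by comparison with $(1-\delta)\kappa_U(\mu-L_j^{-\alpha})$ on large annuli, hence $W\equiv\kappa_U\mu$. Finally, Lemma~\ref{lem_no_concentration}, fed by Lemma~\ref{lem_morrey}, is needed to carry $W_j(e_j)\to\vartheta\kappa_U$ to the limit. This is because the compactness bound $W_j<\vartheta\kappa_U\mu$ holds only inside $\hat{\mathcal{W}}_1$, while $e_j\in\partial\hat{\mathcal{W}}_1$.
\item In Step~3, testing the original equation with $u$ times cutoffs gives $\nabla u\in L^p$ only after $u\in L^{p^*}$ is known. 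The decisive bound $u^{p^*-1}\in L^1$ comes from testing \eqref{eq_v} with $v^{-b}\eta_R$, where the $\hat H(\nabla v)^p$ terms cancel. This is combined with the weighted estimate from $v^{1-s}\eta_R^p$ and Proposition~\ref{prop_small_exp}.
\end{itemize}
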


For $x_0\in\mathbb{R}^N$ and $R>0$ we write $\mathcal{W}_R(x_0)=\{x:H_0(x-x_0)<R\}$ for the Wulff ball, and $\mathcal{W}_R=\mathcal{W}_R(0)$. We also write $\hat{\mathcal{W}}_R(x_0)=\{x:\hat H_0(x-x_0)<R\}$ and $\hat{\mathcal{W}}_R=\hat{\mathcal{W}}_R(0)=-\mathcal{W}_R$. The first ingredient is a Harnack inequality of Schoen type.

\begin{theorem}\label{thm_schoen}
Let $N\ge 2$, $1<p<N$, and let $H$ satisfy Hypothesis \textup{(H)}. There is a constant $C$ depending only on $N$, $p$ and $H$ such that every nonnegative weak solution $u\in W^{1,p}_{\rm loc}(\mathcal{W}_{3R}(x_0))$ of $-\Delta_p^Hu=u^{p^*-1}$ in $\mathcal{W}_{3R}(x_0)$ satisfies
\[
\Big(\sup_{\mathcal{W}_R(x_0)}u\Big)\Big(\inf_{\mathcal{W}_{2R}(x_0)}u\Big)^{p-1}\le C\,R^{p-N}.
\]
\end{theorem}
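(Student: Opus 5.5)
We prove Theorem \ref{thm_schoen} by contradiction, combining point selection, blow-up, and a first-contact argument in the manner of Zhang. By scaling and translation invariance ($u_R(x)=R^{a}u(x_0+Rx)$ solves the same equation), we may assume $R=1$ and $x_0=0$. Suppose the estimate fails. Then there are solutions $u_k$ on $\mathcal{W}_3$ with $(\sup_{\mathcal{W}_1}u_k)(\inf_{\mathcal{W}_2}u_k)^{p-1}\to\infty$.

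\textbf{Point selection.} Apply a doubling-type selection to the function $x\mapsto u_k(x)^{1/a}\,\mathrm{dist}_{H_0}(x,\partial\mathcal{W}_{3/2})$. This produces points $x_k$ and scales $\lambda_k=u_k(x_k)^{-1/a}\to0$ such that the rescalings $v_k(y)=\lambda_k^{a}u_k(x_k+\lambda_k y)$ satisfy $v_k(0)=1$ and $v_k\le 2^{a}$ on Wulff balls of radius $r_k\to\infty$. Interior $C^{1,\beta}$ estimates for the Leray--Lions operator give a subsequential limit $v_k\to v$ in $C^1_{\rm loc}$. The limit $v$ is a bounded positive entire solution with $v(0)=1$.

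\textbf{Rigidity of the profile.} The rigidity result for bounded profiles (the Bochner-identity argument for the reflected norm $\hat H$, culminating in Proposition \ref{prop_profile}) identifies $v$ as a bubble $U_{z,\mu}$. In particular $v$ decays like $\hat H_0^{-\alpha}$. Consequently, on the annulus where $v_k$ is close to $v$, the function $u_k$ is small compared with $u_k(x_k)$, at the rate $u_k(x_k)^{-1/(p-1)}\lambda_k^{-\alpha}$ times $\hat H_0(x-x_k)^{-\alpha}$. Heuristically this is exactly the balance $\sup u\cdot(\inf u)^{p-1}\lesssim1$. The difficulty is to propagate this decay out to the fixed annulus $\mathcal{W}_2$ without losing control.

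\textbf{First contact.} Compare $u_k$ with a multiple $\theta U_{x_k,\lambda_k}$ of the bubble. Let $\rho_k$ be the first radius, with $\lambda_k\ll\rho_k\le 1$, at which $\min_{\partial\hat{\mathcal{W}}_{\rho}(x_k)}u_k$ exceeds $\theta\,U_{x_k,\lambda_k}$ there. If no such radius exists up to scale $1$, then $u_k\lesssim U_{x_k,\lambda_k}$ on the relevant range. Together with the weak Harnack inequality and the Morrey-type estimate (Lemma \ref{lem_morrey}), which bounds $\inf_{\mathcal{W}_2}u_k$ by averages of the measure $u_k^{p^*-1}$, this yields the desired bound and hence a contradiction. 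If such a radius exists, rescale at scale $\rho_k$, setting $w_k(y)=\rho_k^{\alpha}u_k(x_k)^{1/(p-1)}\lambda_k^{\alpha}\cdots u_k(x_k+\rho_k y)$, normalized so that the bubble tail becomes $\hat H_0^{-\alpha}$. The nonlinearity becomes negligible away from the origin. In the limit we obtain a positive Finsler $p$-harmonic function $W$ on $\mathbb{R}^N\setminus\{0\}$ with an isolated singularity whose flux is fixed by the bubble's mass. Theorem \ref{thm_pole} gives $W=\kappa\hat H_0^{-\alpha}+\beta+o(1)$ with gradient control. The first-contact condition forces $\beta>0$, since $W$ touches $\theta\hat H_0^{-\alpha}$ from above on the unit sphere while it is asymptotically $\kappa\hat H_0^{-\alpha}$ near $0$. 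On the other hand, the Pohozaev identity on small Wulff balls, applied to $u_k$ and passed to the limit, yields a Pohozaev invariant proportional to $\kappa^{p-1}\beta$. This invariant must vanish, because the concentrated part is an exact bubble with zero Pohozaev flux, and the contribution from $u_k^{p^*}$ on the intermediate region tends to zero. Hence $\beta=0$, a contradiction.

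\textbf{Main obstacle.} The hardest step is passing the Pohozaev identity to the limit across the intermediate annulus. This requires showing that the energy of $u_k^{p^*}$ between scales $\lambda_k$ and $\rho_k$ is negligible, and that the gradient terms converge. I would handle it using the definition of first contact, which gives $u_k\lesssim U_{x_k,\lambda_k}$ up to $\rho_k$, combined with Lemma \ref{lem_morrey} to control $\nabla u_k$ in $L^p$ on dyadic annuli. The gradient part of Theorem \ref{thm_pole} then ensures that the boundary terms in the Pohozaev identity for $W$ evaluate to a multiple of $\kappa^{p-1}\beta$.
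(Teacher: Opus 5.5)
There is a genuine gap in the first-contact step: the two properties you draw from ``first contact'' come from incompatible definitions. You define $\rho_k$ through $\min_{\partial\hat{\mathcal W}_\rho(x_k)}u_k\ge\theta U_{x_k,\lambda_k}$. If a limit $W$ exists, this does give $W\ge\theta\kappa\mu$ on all of $\partial\hat{\mathcal W}_1$, and comparison with $(\kappa-\delta)\mu+(\theta-1)\kappa+\delta$ then yields $\beta\ge(\theta-1)\kappa>0$.

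But this definition gives no upper bound for $u_k$ inside $\hat{\mathcal W}_{\rho_k}(x_k)$. It only says that the minimum on each smaller sphere is below $\theta U$, so nothing excludes further concentration there. As a result:
\begin{itemize}
\item the rescalings need not be bounded on compact subsets of $\hat{\mathcal W}_1\setminus\{0\}$;
\item the nonlinearity need not disappear in the limit;
\item the flux need not converge to $-M_U$;
\item the hypothesis $W\le C_0\mu$ of Theorem~\ref{thm_pole} is not available.
\end{itemize}

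The bound ``$u_k\lesssim U_{x_k,\lambda_k}$ up to $\rho_k$'' that you use for compactness requires the max-based definition of the paper, $\rho_j=\inf\{r:\max_{\overline{\hat{\mathcal W}_r}}v_j/U\ge\vartheta\}$. With that definition, however, the rescalings touch $\vartheta\kappa_U\mu$ only at one point $e_j\in\partial\hat{\mathcal W}_1$, where convergence on $\hat{\mathcal W}_1\setminus\{0\}$ gives no information. Moreover $\beta>0$ is not forced: $\kappa_U\mu+(\vartheta-1)\kappa_U$ is compatible with all the contact information.

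The missing idea is Steps 3--6 of Proposition~\ref{prop_firstcontact}:
\begin{enumerate}
\item Pohozaev gives $\beta=0$. This needs no intermediate-annulus estimate, so your ``main obstacle'' is not one. For the critical power $N\mathcal G_j-a\,tf_j\equiv0$, so the Pohozaev flux of $W_j$ through every $\partial\hat{\mathcal W}_r$ is exactly zero.
\item Comparison and Lemma~\ref{lem_strongcomp} give $W\equiv\kappa_U\mu$.
\item One must then control $W_j$ on a full neighborhood of $e_j$, including outside $\hat{\mathcal W}_1$. This is the role of Lemma~\ref{lem_morrey}, which gives $\varpi_j(B_r(e_j))\le Cr^{N-p}W_j(e_j)^{p-1}$, and of Lemma~\ref{lem_no_concentration}, which turns this into $\varepsilon_j^{1/p}W_j^{1/a}\le C_*$ near $e_j$.
\item Then $\lim_jW_j(e_j)$ equals both $\vartheta\kappa_U$ and $\kappa_U$, a contradiction.
\end{enumerate}
Note that Lemma~\ref{lem_morrey} bounds mass by the value of the solution, i.e.\ it is a lower bound for $u$. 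It cannot be used, as you suggest, to bound $\inf u_k$ from above.

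The selection step also needs repair. A doubling selection with $v_k\le 2^a$ gives a limit with $v(0)=1$ but not $\sup v=v(0)$. Theorem~\ref{thm_profile} requires $0<V\le 1=V(0)$, and you have no classification of bounded entire solutions at this stage.

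There is also a scale problem. Lemma~\ref{lem_no_concentration} needs $u_k\le Cu_k(x_k)$ around the contact sphere. Your selection supplies this only within $H_0$-distance comparable to $d_k=\mathrm{dist}_{H_0}(x_k,\partial\mathcal W_{3/2})$, so contact can be sought only at scales $\lesssim d_k$, not up to scale $1$. With the weight $u^{1/a}d$, the resulting bound on $\sup_{\mathcal W_1}u_k\,(\inf_{\mathcal W_2}u_k)^{p-1}$ is of order $d_k^{-a(p-1)}$, which is useless if $d_k\to0$.

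The paper instead maximizes $(2R_j-H_0)^{N-p}u_j$. This has three effects:
\begin{itemize}
\item it gives $v_j\le(1-\rho/(8\Gamma_j))^{-(N-p)}\to1$, so the limit is exactly $U$;
\item the powers of the distance cancel in the final estimate;
\item running first contact at scale $S_j=2\Xi_j^{-1/(2(N-p))}\Gamma_j=o(\Gamma_j)$ leads to $\Xi_j\le C\,\Xi_j^{1/2}$, which contradicts $\Xi_j\to\infty$.
\end{itemize}
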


The second ingredient is the rigidity of bounded profiles, which is where blow-up limits end up.

\begin{theorem}\label{thm_profile}
Let $N\ge 2$, $1<p<N$, and let $H$ satisfy Hypothesis \textup{(H)}. If $V\in W^{1,p}_{\rm loc}(\mathbb{R}^N)$ is a local weak solution of $-\Delta_p^HV=V^{p^*-1}$ in $\mathbb{R}^N$ with $0<V\le 1$ and $V(0)=1$, then $V=U$.
\end{theorem}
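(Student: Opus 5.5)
We prove Theorem \ref{thm_profile} by the rigidity argument sketched in Subsection \ref{subsec_why}, applied to the transformed function
\[
w=V^{-p/(N-p)}=V^{-1/a}.
\]
Since $0<V\le1$, we have $w\ge1$, and $w$ attains its minimum $1$ at the origin. By Lemma \ref{lem_transformed}, $w$ solves the transformed equation for the reflected norm $\hat H$:
\[
\Delta_p^{\hat H}w=\frac{b}{a}\,\frac{\hat H(\nabla w)^p}{w}+\frac{c}{a}\,\frac1w
\]
(up to the precise normalization of the constants). The bubble $U$ corresponds exactly to $w_U=1+K\hat H_0(x)^{p'}$, whose stress field $X=\hat A(\nabla w_U)$ is an affine function of $x$. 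Rigidity therefore amounts to showing that the trace-free part $E$ of $DX$ vanishes.

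Interior regularity comes first. By standard $C^{1,\beta}_{\rm loc}$ estimates for $V$, together with the bound $V\le1$, $w$ is locally Lipschitz with locally bounded gradient. On the regular set $\{\nabla w\ne0\}$ the equation is uniformly elliptic with $C^0$ coefficients $D\hat A$, so $w\in W^{2,2}_{\rm loc}$ there. This is where the weak Bochner identity of Proposition \ref{prop_bochner} applies. Using the symmetric trace-free matrix $T=\mathcal A^{-1/2}E\mathcal A^{1/2}$ and Lemma \ref{lem_algebra}, we obtain in the distributional sense on $\{\nabla w\ne0\}$ a differential inequality of the form
\[
\operatorname{div}\big(w^{1-N}\,\mathcal Z\big)\ \ge\ c_N\,w^{1-N}\,\operatorname{tr}(T^2)\ \ge0 .
\]
Here $\mathcal Z$ is the conjugated stress-tensor vector field, built from $X$, $\nabla w$ and $E$ as in the paper's $P$-function computations.

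The second step, which is the core of the multiplier argument of Section \ref{sec_Pbound}, is to test this inequality against $\eta_R^{m}\,w^{-\gamma}$ for cutoffs $\eta_R$ supported in $\hat{\mathcal W}_{2R}$, with suitable $\gamma,m$. The boundedness $w\ge1$, equivalently $V\le1$, is what makes the boundary terms controllable without any energy assumption. It yields growth bounds $\int_{\hat{\mathcal W}_R}w^{-\gamma}\hat H(\nabla w)^{q}\lesssim R^{N-\theta}$. Feeding these into the integral bootstrap of Ciraolo--Gatti (Section \ref{sec_global}) improves them until the cutoff error terms tend to zero as $R\to\infty$. We conclude $\operatorname{tr}(T^2)=0$, hence $E=0$ on $\{\nabla w\neq0\}$. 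I expect this step to be the main obstacle. The exponents must be chosen so that every error term carries a negative power of $R$ in the whole range $1<p<N$. This is exactly where earlier works needed restrictions on $p$, and where boundedness of $V$ must be used decisively rather than merely through $L^\infty$ bounds on the gradient.

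Once $E=0$ on the regular set, $DX=\frac{\operatorname{div}X}{N}\,\mathrm{Id}$ on each connected component of $\{\nabla w\ne0\}$. The transformed equation then shows that $\operatorname{div}X$ is constant there, so $X$ is affine and $\nabla w=\nabla\hat H^{p'}_{\!0}$-type, giving $w=\mu+\nu\hat H_0(x-z)^{p'}$ on each component. The critical set is handled by the topological Step 5. On a component the critical set of such a $w$ is only the single point $z$. A connectedness and continuity argument then shows that $\{\nabla w=0\}$ is a single point and the representation is global. Finally, the equation fixes $\nu$ in terms of $\mu$, and the normalizations $\min w=w(0)=1$ force $z=0$, $\mu=1$, $\nu=K$. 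Hence $V=U$.
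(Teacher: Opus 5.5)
Your proposal has a genuine gap at the step that carries the theorem. The vanishing of $T$ (equivalently $P\equiv c$) is never proved, and the integral route you sketch for it cannot be completed as stated.

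First, Proposition~\ref{prop_bochner} holds only in $\mathcal D'(\Omega_{\rm reg})$. Your test functions $\eta_R^m w^{-\gamma}$ are supported on large Wulff balls containing the critical point $x=0$, where $w$ attains its minimum. You would therefore also need regularity of the stress field, and control of the degenerate weights, across $\{\nabla w=0\}$; you provide neither.

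Second, and more seriously, the bootstrap of Section~\ref{sec_global} runs on the sharp rates $\int_{\mathcal W_{2R}}v^{-s}\le C_sR^{N-p's}$. These come from the upper bound $\min_{\overline{\mathcal W_R}}u\le CR^{-\alpha}$ of Lemma~\ref{lem_decay}. That bound is derived from Theorem~\ref{thm_schoen}, whose proof uses Theorem~\ref{thm_profile} itself through Proposition~\ref{prop_selection}(iii) and the first-contact argument. So appealing to that bootstrap here is circular. The hypothesis $V\le 1$ alone gives only $w\ge 1$, hence $\int_{\mathcal W_{2R}}w^{-s}\lesssim R^N$ for $s>0$, far from the sharp rate. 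The difficulty you flag yourself, choosing exponents so that every cutoff error decays for all $1<p<N$, is exactly what restricted $p$ in earlier integral approaches. Nothing in your proposal overcomes it.

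The missing idea is that the paper does not integrate at this stage; it argues pointwise with maximum principles. Section~\ref{sec_Pbound} is not a cutoff-testing argument, as you describe it. It proves $P\le c$ (Proposition~\ref{prop_Pbound}) by studying $M=P\phi(w)$:
\begin{itemize}
\item On $\{M>\ell\}$ one has $\mathcal LM-\mathbf d\cdot\nabla M>0$, by the conjugated Kato inequality.
\item The bound $V\le 1$ enters through $w\ge 1$, $H(\nabla w)\le C_*w$ and $\phi(w)\lesssim w^{1-N}$, which force $M\to 0$ as $w\to\infty$.
\item Hence a maximizing sequence has bounded $w$, and its translates converge by the Harnack and $C^{1,\gamma}$ estimates.
\item The strong maximum principle at the limiting interior maximum then gives a contradiction.
\end{itemize}

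Next, $\Pi=c-P\ge 0$ is an $\mathcal L$-supersolution on $\Omega_{\rm reg}$ that vanishes at the degenerate point $0$, where the strong maximum principle cannot be applied directly. A blow-up at any point where $w=1$ (Lemma~\ref{lem_tangent}), combined with the barrier $(w-1)^\sigma$, shows $\Pi\equiv 0$ near such points. Since $P=c/w$ at critical points, $\Pi$ can vanish at a critical point only where $w=1$. So the strong maximum principle propagates $\Pi\equiv 0$ to all of $\mathbb R^N$. Only then does Proposition~\ref{prop_bochner} give $N(p-1)w^{1-N}|T|^2=\mathcal Lc=0$.

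Your final identification step is sound: the affine stress field on each component of $\Omega_{\rm reg}$, the topological argument, and the normalization $w(0)=1=\min w$. But it rests entirely on this unproved rigidity.
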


The third ingredient is an expansion at isolated singularities which we expect to be useful elsewhere. Set $\mu(x)=\hat H_0(x)^{-\alpha}=H_0(-x)^{-\alpha}$. This function is $\Delta_p^H$-harmonic in $\mathbb{R}^N\setminus\{0\}$ (Lemma \ref{lem_bubble}). For a function $W\in C^1(\hat{\mathcal{W}}_R\setminus\{0\})$ with $\Delta_p^HW=0$ weakly, the flux $\int_{\partial\hat{\mathcal{W}}_r}A(\nabla W)\cdot\nu\,dS$ does not depend on $r\in(0,R)$ (Lemma \ref{lem_flux}).

\begin{theorem}\label{thm_pole}
Let $N\ge 2$, $1<p<N$, let $H$ satisfy Hypothesis \textup{(H)}, and let $W\in C^1(\hat{\mathcal{W}}_R\setminus\{0\})$ be a positive weak solution of $\Delta_p^HW=0$ in $\hat{\mathcal{W}}_R\setminus\{0\}$. Assume that $W\le C_0\,\mu$ in $\hat{\mathcal{W}}_R\setminus\{0\}$ and that the flux $\mathcal{F}=\int_{\partial\hat{\mathcal{W}}_r}A(\nabla W)\cdot\nu\,dS$ is not zero. Then $\mathcal{F}<0$ and there are $\kappa>0$ and $\beta\in\mathbb{R}$ such that
\begin{enumerate}[label=\textup{(\roman*)}]
\item $N|\hat{\mathcal{W}}_1|(\alpha\kappa)^{p-1}=-\mathcal{F}$;
\item $W(x)-\kappa\mu(x)\to\beta$ as $x\to 0$;
\item with $z=W-\kappa\mu-\beta$ and $z_r(y)=z(ry)$,
\[
\lim_{r\to 0}\Big(\sup_{1/2\le \hat H_0(y)\le 4}|z_r(y)|+\int_{\{1\le \hat H_0\le 2\}}|\nabla z_r|^2\,dy+\sup_{\hat H_0(y)=1}r^{\alpha+1}|\nabla z(ry)|\Big)=0;
\]
\item for every $r\in(0,R)$,
\[
\int_{\partial\hat{\mathcal{W}}_r}\Big[\big(aW+x\cdot\nabla W\big)A(\nabla W)\cdot\nu-\frac{x\cdot\nu}{p}H(\nabla W)^p\Big]dS=a\,\beta\,\mathcal{F}.
\]
\end{enumerate}
\end{theorem}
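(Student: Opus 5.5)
The plan is to first obtain the leading term by a blow-up at the pole, then upgrade it to an additive expansion by studying the remainder through a linearized equation with zero flux, and finally read off the Pohozaev constant by letting $r\to0$.

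\emph{Step 1: scaling and leading term.} For $r\in(0,R)$ set $W_r(y)=r^{\alpha}W(ry)$. By the homogeneity $A(t\xi)=t^{p-1}A(\xi)$ for $t>0$, $W_r$ is a positive weak solution of $\Delta_p^HW_r=0$ in $\hat{\mathcal W}_{R/r}\setminus\{0\}$. Since $\mu$ is homogeneous of degree $-\alpha$, the bound $W_r\le C_0\mu$ persists, and the flux of $W_r$ through $\partial\hat{\mathcal W}_1$ equals $\mathcal F$ (Lemma \ref{lem_flux}). Interior $C^{1,\gamma}$ estimates for the Leray--Lions operator on the annuli $\{1/8\le\hat H_0\le 8\}$ give precompactness of $\{W_r\}$ in $C^1_{\rm loc}(\mathbb R^N\setminus\{0\})$. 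Every limit $V$ is nonnegative and $\Delta_p^H$-harmonic in $\mathbb R^N\setminus\{0\}$, satisfies $V\le C_0\mu$, and has flux $\mathcal F\ne0$, so $V\not\equiv0$. I would then invoke the quotient asymptotic $W/\mu\to\kappa\in[0,\infty)$, which the introduction attributes to Hou \cite{Hou26}. Alternatively, one can classify such $V$ directly by comparison with multiples of $\mu$, using the strong comparison principle on annuli together with the decay $V\le C_0\mu$ at infinity; this also yields $V=\kappa\mu$. In either case $W_r\to\kappa\mu$ in $C^1_{\rm loc}$. Passing to the limit in the flux and computing $A(\nabla\mu)\cdot\nu$ on $\partial\hat{\mathcal W}_1$ gives $\mathcal F=-N|\hat{\mathcal W}_1|(\alpha\kappa)^{p-1}$. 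Hence $\kappa>0$ and $\mathcal F<0$, which is (i).

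\emph{Step 2: the remainder (main obstacle).} Put $z=W-\kappa\mu$, so that $z=o(\mu)$ and $\nabla z=o(|\nabla\mu|)$ by Step 1. The function $z$ solves the linear equation $\operatorname{div}(B\nabla z)=0$, where
\[
B=\int_0^1DA\big(\nabla\mu+t\nabla z\big)\,dt .
\]
Near $0$, $\nabla\mu+t\nabla z$ stays in a cone bounded away from the origin after normalization. After rescaling, the coefficients $B_r$ of $z_r$, divided by $r^{(\alpha+1)(p-2)}$, are therefore uniformly elliptic and converge to $DA(\nabla\mu)$ suitably normalized. The key structural fact is that the flux $\int B\nabla z\cdot\nu$ vanishes, because $W$ and $\kappa\mu$ have the same flux.

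I would prove a three-annulus, or oscillation-decay, estimate: on dyadic annuli $\mathcal A_k=\{2^{-k-1}\le\hat H_0\le2^{-k}\}$, $\operatorname{osc}_{\mathcal A_k}z\le\theta\,\operatorname{osc}_{\mathcal A_{k-1}}z+$ (errors that are summable). I would argue by contradiction and blow-up. A normalized sequence violating the estimate converges to a solution $\zeta$ of the limiting homogeneous linear equation $L_\mu\zeta=0$ on $\mathbb R^N\setminus\{0\}$, with zero flux and with growth $o(\hat H_0^{-\alpha})$ at $0$, so that $\zeta$ grows strictly slower than the fundamental mode. The limiting operator is invariant under dilations, so such solutions decompose into homogeneous modes. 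The only mode of degree $0$ is the constants. The mode of degree $-\alpha$ (the derivative of $\mu$ in $\kappa$) is excluded by the zero-flux condition and by the growth bound. The mode corresponding to $\partial_\lambda$ of the dilation family is also excluded. The remaining difficulty, which I expect to be the hardest point, is to show that no mode of degree in $(-\alpha,0)$ exists; I would try to derive this from an energy or monotonicity argument on annuli exploiting the divergence structure.

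Granting this gap, the oscillation of $z$ decays geometrically. Hence $z\to\beta$, which is (ii), and $\sup_{1/2\le\hat H_0\le4}|z_r-\beta|\to0$. The $W^{1,2}$ bound on $\{1\le\hat H_0\le2\}$ then follows from Caccioppoli's inequality for the uniformly elliptic rescaled equation. The sup bound on $r^{\alpha+1}|\nabla z(ry)|$ follows from the $C^{1,\gamma}$ convergence in Step 1 combined with interpolation, which gives (iii).

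\emph{Step 3: Pohozaev identity (iv).} For $\Delta_p^H$-harmonic $W$, the vector field
\[
P=\big(aW+x\cdot\nabla W\big)A(\nabla W)-\frac{x}{p}H(\nabla W)^p
\]
is divergence free on $\{\nabla W\ne0\}$. Here I use $A(\xi)\cdot\xi=H(\xi)^p$, the Euler identity, and $a=(N-p)/p$. Using $W\in W^{2,2}_{\rm loc}$ away from critical points, or approximating, I obtain the distributional version. Consequently the boundary integral in (iv) is independent of $r$. I then evaluate it as $r\to0$ by substituting $W=\kappa\mu+\beta+z$ and using (iii). The pure $\kappa\mu$ part vanishes identically by homogeneity, a direct computation on $\partial\hat{\mathcal W}_r$. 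The term $a\beta A(\nabla(\kappa\mu))\cdot\nu$ integrates to $a\beta\mathcal F+o(1)$. All terms involving $z$ are $o(1)$ by the rescaled bounds in (iii), since each carries the scaling $r^{N-1}\cdot r^{-(\alpha+1)(p-1)}\cdot o(1)=o(1)$. This proves (iv).
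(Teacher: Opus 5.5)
\textbf{The central step (ii) is not proved.} Your Steps 1 and 3 point in the right direction, but the additive expansion (ii) is left open, and (iii), (iv) both depend on it. You grant the absence of homogeneous modes of degree in $(-\alpha,0)$ for $\operatorname{div}(DA(\nabla\mu)\nabla\cdot)$. The oscillation-decay scheme also has two further defects.
\begin{enumerate}[label=\textup{(\alph*)}]
\item It has no a priori control of $W-\kappa\mu$ across scales. Once you normalize by the oscillation on one annulus, the information $W-\kappa\mu=o(\mu)$ does not pass to the normalized sequence. So you cannot conclude that the blow-up limit grows more slowly than the fundamental mode.
\item A decomposition into homogeneous modes, with a spectral gap around $0$ and $-\alpha$, would itself need a proof for this non-separable anisotropic operator.
\end{enumerate}
As written, nothing excludes a remainder that oscillates, or that grows like $\hat H_0^{-\gamma}$ with $0<\gamma<\alpha$.

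\textbf{The missing idea.} Constants are $\Delta_p^H$-harmonic and adding a constant commutes with the operator, so the functions $\kappa\mu+C$ serve as barriers. This gives (ii) in three steps, with no mode analysis.
\begin{enumerate}[label=\textup{(\arabic*)}]
\item \emph{Boundedness.} Put $\bar z=W-\kappa\mu$ and $\varepsilon(r)=\max_{\partial\hat{\mathcal W}_r}|W/\mu-\kappa|\to0$. Compare $W$ with $(\kappa\pm\varepsilon(r))\mu+C_\pm$ on $\hat{\mathcal W}_{R_0}\setminus\overline{\hat{\mathcal W}_r}$ using Lemma \ref{lem_comparison}, where $C_\pm$ bound $\bar z$ on $\partial\hat{\mathcal W}_{R_0}$. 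Fixing $x$ and letting $r\to0$ shows that $\bar z$ is bounded near $0$.
\item \emph{Limits of sphere extrema.} Let $M(r)=\max_{\partial\hat{\mathcal W}_r}\bar z$. Comparing with $\kappa\mu+\max\{M(r),M(s)\}$ on annuli gives $M(t)\le\max\{M(r),M(s)\}$ for $r<t<s$. This forces $M(r)\to\beta_+$. In the same way the sphere minima satisfy $m(r)\to\beta_-$.
\item \emph{Strong maximum principle.} The rescalings $\bar z(r\,\cdot)$ are bounded solutions of uniformly elliptic linear equations $\operatorname{div}(\mathcal B_r\nabla\cdot)=0$, with $\mathcal B_r\to DA(\nabla(\kappa\mu))$ locally uniformly. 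Any limit has maximum exactly $\beta_+$ on every Wulff sphere, hence an interior maximum. The strong maximum principle makes it constant, so $\beta_-=\beta_+$.
\end{enumerate}

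\textbf{Step 1.} Classifying all blow-up limits $0\le V\le C_0\mu$ is itself a Liouville statement, since $\sup V/\mu$ need not be attained. Citing Hou for a possibly non-reversible, merely $C^2$ norm requires checking hypotheses you have not checked. It is simpler to blow up along sequences realizing $\limsup W/\mu$ and $\liminf W/\mu$. Each limit then touches $\kappa_\pm\mu$ at a point of $\partial\hat{\mathcal W}_1$. Apply Lemma \ref{lem_strongcomp}, which is legitimate because $\nabla\mu\ne0$, and then match fluxes to get $\kappa_+=\kappa_-$.

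\textbf{Step 3.} Two corrections are needed.
\begin{itemize}
\item Constancy of the Pohozaev flux for every $r\in(0,R)$ requires $\operatorname{div}\mathbf P=0$ in $\mathcal D'$ across the critical set of $W$. The $W^{2,2}$ argument on $\{\nabla W\ne0\}$ does not give this; use the Pucci--Serrin identity for $C^1$ solutions, as in Lemma \ref{lem_pohozaev}.
\item After rescaling, the terms involving $z$ have size $|\nabla z_r|+|z_r|$. On spheres, (iii) only gives $|\nabla z_r|=o(r^{-\alpha})$, not $o(1)$; smallness of $\nabla z_r$ holds in $L^2$ on the annulus $\{1\le\hat H_0\le2\}$. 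So you must first average the constant flux over $\{r<\hat H_0<2r\}$ and only then let $r\to0$.
\end{itemize}
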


\begin{remark}
The positive constant $\kappa$ in (i) and (ii) is also the limit of $W/\mu$ at the origin. For comparison, quotient asymptotics for Finsler $p$-harmonic singularities (and more general Fuchsian equations) have recently been obtained by Hou \cite{Hou26}. That result is not used here. In the proof, conservation of the flux identifies every blow-up limit. A comparison argument then bounds the remainder, and a linearized Liouville argument on annuli produces the additive constant that the Pohozaev identity requires. Only the continuity of $DA$ away from the origin is used. For the Euclidean $p$-Laplacian the behavior of solutions at isolated singularities was described by Serrin \cite{Ser65} and Kichenassamy and V\'eron \cite{KV86}. For the Finsler $p$-Laplacian with a Hardy term, Esposito, Montoro, Sciunzi and Vuono \cite[Theorem 1.3]{EMSV24} classify, for even norms, positive solutions in the whole punctured space which satisfy two-sided bounds of fundamental type. That is a global statement, while Theorem \ref{thm_pole} is local.
\end{remark}

\subsection{Organization}

Section \ref{sec_prelim} collects the properties of $H$, the regularity and comparison tools, the Pohozaev identity and a Morrey-type estimate for supersolutions, all in the form in which they are used. Section \ref{sec_bochner} introduces the transformed equation and proves the weak Bochner identity on the regular set. Section \ref{sec_Pbound} proves the sharp bound for the pressure $P$ by an explicit multiplier. Section \ref{sec_profile} proves Theorem \ref{thm_profile}. Section \ref{sec_pole} proves Theorem \ref{thm_pole}. Section \ref{sec_schoen} proves the first-contact proposition and Theorem \ref{thm_schoen}. Section \ref{sec_global} derives global integrability and finite energy and proves Theorem \ref{thm_main}.

\section{Preliminaries}\label{sec_prelim}

Throughout this section $N\ge 2$, $1<p<N$, and $H$ satisfies Hypothesis (H). Constants which depend only on $N$, $p$ and $H$ are called \emph{structural}. We write $\hat A(\xi)=\hat H(\xi)^{p-1}\nabla\hat H(\xi)$ for the vector field of $\hat H$. Since $\hat H$ also satisfies (H), every statement of this section remains true with $H$, $H_0$, $A$, $\mathcal{W}$ replaced by $\hat H$, $\hat H_0$, $\hat A$, $\hat{\mathcal{W}}$. In particular Lemma \ref{lem_norm}(e), Lemma \ref{lem_flux} and the flux form of Lemma \ref{lem_pohozaev} hold for the spheres $\partial\hat{\mathcal{W}}_r$, whatever the operator.

\subsection{The norm and the operator}

\begin{lemma}\label{lem_norm}
The following hold.
\begin{enumerate}[label=\textup{(\alph*)}]
\item There are $0<m_H\le M_H$ with $m_H|\xi|\le H(\xi)\le M_H|\xi|$ and $M_H^{-1}|x|\le H_0(x)\le m_H^{-1}|x|$. With $c_H=\max_{x\neq 0}H_0(x)/H_0(-x)\ge 1$ one has $\hat H_0\le c_HH_0$ and $H_0\le c_H\hat H_0$. For all $\xi,x$ one has $\xi\cdot x\le H(\xi)H_0(x)$. If $\xi\neq 0$ and $x\neq 0$, equality holds if and only if $x=H_0(x)\nabla H(\xi)$.
\item $H_0\in C^1(\mathbb{R}^N\setminus\{0\})$, and for $x\neq 0$, $\xi\ne 0$,
\[
H(\nabla H_0(x))=1,\qquad \nabla H(\nabla H_0(x))=\frac{x}{H_0(x)},\qquad H_0(\nabla H(\xi))=1.
\]
\item $A$ is continuous on $\mathbb{R}^N$, of class $C^1$ on $\mathbb{R}^N\setminus\{0\}$, positively homogeneous of degree $p-1$, and $A(-\xi)=-\hat A(\xi)$. For $\xi\neq 0$,
\[
A(\xi)\cdot\xi=H(\xi)^p,\qquad H_0(A(\xi))=H(\xi)^{p-1},\qquad DA(\xi)\xi=(p-1)A(\xi).
\]
Moreover $A$ is a bijection of $\mathbb{R}^N$ with inverse $A^{-1}(x)=H_0(x)^{p'-1}\nabla H_0(x)$ for $x\neq 0$.
\item For $\xi\neq 0$ the matrix $DA(\xi)=D^2(H^p/p)(\xi)$ is symmetric and, with $\lambda_p=\min\{1,p-1\}\lambda$ and $\Lambda_p=\max\{1,p-1\}\Lambda$,
\begin{equation}\label{eq_ellipticity}
\lambda_pH(\xi)^{p-2}|\eta|^2\le DA(\xi)\eta\cdot\eta\le\Lambda_pH(\xi)^{p-2}|\eta|^2\qquad(\eta\in\mathbb{R}^N).
\end{equation}
In particular $(A(\xi)-A(\eta))\cdot(\xi-\eta)>0$ for $\xi\neq\eta$, and $A(\xi)\cdot\xi\ge m_H^p|\xi|^p$, $|A(\xi)|\le C|\xi|^{p-1}$ with $C$ structural.
\item For $r>0$ the Wulff sphere $\partial\mathcal{W}_r$ is a $C^1$ hypersurface with outer unit normal $\nu=\nabla H_0/|\nabla H_0|$, and on $\partial\mathcal{W}_r$
\[
x\cdot\nu=\frac{r}{|\nabla H_0(x)|},\qquad H(\nu)=\frac{1}{|\nabla H_0(x)|},\qquad\int_{\partial\mathcal{W}_r}\frac{dS}{|\nabla H_0|}=N|\mathcal{W}_1|r^{N-1}.
\]
\item Let $0\le r_1<r_2\le\infty$ and $f\in C^1((r_1,r_2))$. If $f'\ge 0$ and $v(x)=f(H_0(x))$, or if $f'\le 0$ and $v(x)=f(\hat H_0(x))$, then, with $G=H_0$ in the first case and $G=\hat H_0$ in the second,
\[
A(\nabla v(x))=|f'(G(x))|^{p-2}f'(G(x))\,\frac{x}{G(x)}\qquad\text{on }\{r_1<G<r_2\}.
\]
If moreover $t\mapsto t^{N-1}|f'(t)|^{p-2}f'(t)$ is of class $C^1$, then $\Delta_p^Hv=t^{1-N}\big(t^{N-1}|f'|^{p-2}f'\big)'(t)$ at $t=G(x)$, pointwise and in the weak sense.
\end{enumerate}
\end{lemma}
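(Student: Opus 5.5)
The lemma is a collection of standard facts about Minkowski norms satisfying Hypothesis (H). I would prove them in order, each part relying on the earlier ones.

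\emph{Part (a).} The two-sided comparison with $|\cdot|$ follows from continuity and positivity of $H$ on the unit sphere, together with homogeneity. The bounds for $H_0$ then follow by duality from the definition $H_0(x)=\sup_{H(\xi)\le1}\xi\cdot x$. The constant $c_H$ is finite because $H_0(x)/H_0(-x)$ is continuous and positive on the compact unit sphere. The inequality $\xi\cdot x\le H(\xi)H_0(x)$ is the definition of $H_0$ applied to $\xi/H(\xi)$.

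For the equality case, note that (H) makes $H^2/2$ strictly convex with $D^2(H^2/2)\ge\lambda\,\mathrm{Id}$. Hence the unit ball $\{H\le1\}$ is strictly convex with a $C^1$ boundary. The supremum defining $H_0(x)$ is therefore attained at a unique point $\xi$, characterized by the Lagrange condition $x=t\nabla H(\xi)$. Pairing with $\xi$ and using Euler's identity $\nabla H(\xi)\cdot\xi=H(\xi)$ gives $t=H_0(x)$.

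\emph{Part (b).} Since the maximizer $\xi(x)$ in the definition of $H_0$ is unique, Danskin's theorem shows that $H_0$ is differentiable with $\nabla H_0(x)=\xi(x)$. This gives $H(\nabla H_0(x))=1$, and the equality case of (a) gives $\nabla H(\nabla H_0(x))=x/H_0(x)$. Continuity of $\xi(x)$ follows from uniqueness and compactness. Finally, $H_0(\nabla H(\xi))=1$ comes from $\nabla H(\xi)\cdot\xi=H(\xi)$, which is equality in the Cauchy--Schwarz type bound, together with $\nabla H(\xi)\cdot\eta\le H(\eta)$ by convexity.

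\emph{Part (c).} Homogeneity and the $C^1$ regularity of $A$ away from the origin are immediate, and continuity at $0$ follows from $|A(\xi)|\le C|\xi|^{p-1}$. The identity $\hat A(\xi)=-A(-\xi)$ follows from $\nabla\hat H(\xi)=-\nabla H(-\xi)$. Euler's identity gives $A(\xi)\cdot\xi=H^p$, and differentiating the homogeneity relation gives $DA(\xi)\xi=(p-1)A(\xi)$. By (b), $H_0(A(\xi))=H^{p-1}H_0(\nabla H(\xi))=H^{p-1}$. For bijectivity, I would define $B(x)=H_0(x)^{p'-1}\nabla H_0(x)$ and check $A(B(x))=x$ using (b). Then $B(A(\xi))=\xi$ by the symmetric argument, in which the dual norm of $H_0$ is $H$.

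\emph{Part (d).} This is the only genuine computation. Write $D^2(H^p/p)=H^{p-2}\big[HD^2H+(p-1)\nabla H\otimes\nabla H\big]$ and split it as
\[
H^{p-2}\big[(HD^2H+\nabla H\otimes\nabla H)+(p-2)\nabla H\otimes\nabla H\big].
\]
For the extreme constants I would decompose $\eta=s\xi+\eta'$ with $\nabla H(\xi)\cdot\eta'=0$. Then $D^2H\,\xi=0$, and on the two directions the quadratic form equals
\[
H^{p-2}\Big[\big(HD^2H+\nabla H\otimes\nabla H\big)\eta'\cdot\eta'+(p-1)(\nabla H\cdot\eta)^2\Big].
\]
Comparing with $HD^2H+\nabla H\otimes\nabla H$, which has no cross terms in this decomposition since $(HD^2H+\nabla H\otimes\nabla H)\xi=H\nabla H$, the form is bounded between $\min\{1,p-1\}$ and $\max\{1,p-1\}$ times $H^{p-2}$ multiplied by the (H)-form evaluated at $\eta$. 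Applying (H) then gives the claimed bounds. Keeping this decomposition consistent is the main obstacle, though it is routine.

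Strict monotonicity follows by integrating $DA$ along the segment from $\eta$ to $\xi$. If the segment passes through $0$, one uses positive homogeneity instead. The remaining bounds follow from (a).

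\emph{Part (e).} Here $\partial\mathcal W_r$ is a level set of the $C^1$ function $H_0$, with $\nabla H_0\neq0$ there. Euler's identity gives $x\cdot\nabla H_0=H_0=r$, and homogeneity gives $H(\nu)=H(\nabla H_0)/|\nabla H_0|$. The integral identity comes from the coarea formula applied to $|\mathcal W_r|=r^N|\mathcal W_1|$ by differentiating in $r$.

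\emph{Part (f).} Using (b), $\nabla v=f'(G)\nabla G$. The sign condition on $f'$ in each case is what makes the homogeneity of degree $p-1$ usable. In the first case $f'\ge0$ and $A(f'\nabla H_0)=f'^{\,p-1}x/H_0$. In the second case $f'\le0$, so
\[
A(f'\nabla\hat H_0)=-|f'|^{p-1}\hat A(\nabla\hat H_0),
\]
and we apply (b) for $\hat H$. The divergence of $g(G)x/G$ with $g=|f'|^{p-2}f'$ is $t^{1-N}(t^{N-1}g)'$, using $x\cdot\nabla G=G$. This holds pointwise, and also weakly, after integrating by parts against test functions supported away from the origin.
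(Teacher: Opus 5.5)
Your proposal is correct and follows essentially the same route as the paper, part by part. The only differences are in which standard tool you use for individual steps: Lagrange multipliers and Danskin's theorem where the paper uses the subdifferential of a support function and Rockafellar's singleton-subdifferential criterion; an explicit two-sided inverse for $A$ where the paper gets injectivity from strict monotonicity; and a direct homogeneity computation for segments through the origin where the paper integrates $DA$ across the single bad point. None of these changes the argument.
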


\begin{proof}
(a) The two-sided bounds follow from the positivity and continuity of $H$ on the Euclidean unit sphere. The inequality $\xi\cdot x\le H(\xi)H_0(x)$ is the definition of $H_0$. Since $H$ is convex and positively homogeneous, it is the support function of the convex body $\{H_0\le 1\}$, and its subdifferential at $\xi\neq 0$ is the set of $y$ with $H_0(y)\le 1$ and $y\cdot\xi=H(\xi)$. As $H$ is differentiable at $\xi$, this set is $\{\nabla H(\xi)\}$. If $\xi\cdot x=H(\xi)H_0(x)$ with $x\neq 0$, the point $y=x/H_0(x)$ belongs to the subdifferential, hence $x=H_0(x)\nabla H(\xi)$. Conversely, if $x=H_0(x)\nabla H(\xi)$, Euler's identity $\nabla H(\xi)\cdot\xi=H(\xi)$ gives equality.

(b) By \eqref{eq_unif_convex}, the unit ball $\{H\le 1\}$ is strictly convex. The dual norm $H_0$ is the support function of this ball, so its subdifferential at $x\neq 0$ is the set of maximizers of $\xi\mapsto\xi\cdot x$ on $\{H\le 1\}$. Strict convexity makes this set a single point, which lies on $\{H=1\}$. A convex function whose subdifferential is a singleton at every point of an open set is differentiable there and of class $C^1$ \cite[Theorems 25.1 and 25.5]{Roc70}. Hence $H_0\in C^1(\mathbb{R}^N\setminus\{0\})$ and $H(\nabla H_0(x))=1$. Since $\nabla H_0(x)\cdot x=H_0(x)$, part (a) with $\xi=\nabla H_0(x)$ gives $x=H_0(x)\nabla H(\nabla H_0(x))$. Finally $\nabla H(\xi)$ lies in the subdifferential of $H$ at $\xi$, so $H_0(\nabla H(\xi))\le 1$, and $\nabla H(\xi)\cdot\xi=H(\xi)$ forces $H_0(\nabla H(\xi))=1$.

(c) Since $\nabla\hat H(\xi)=-\nabla H(-\xi)$, we have $\hat A(\xi)=-A(-\xi)$. The homogeneity follows from that of $H$. Continuity at $0$ follows from $|A(\xi)|\le C|\xi|^{p-1}$. The first identity is Euler's identity. The second follows from (b), since $H_0(H^{p-1}\nabla H)=H^{p-1}H_0(\nabla H)$. The third is Euler's identity for the $(p-1)$-homogeneous map $A$. For $x\neq 0$, by homogeneity and (b),
\[
A\big(H_0(x)^{p'-1}\nabla H_0(x)\big)=H_0(x)^{(p'-1)(p-1)}H(\nabla H_0(x))^{p-1}\nabla H(\nabla H_0(x))=x,
\]
because $(p'-1)(p-1)=1$. Injectivity of $A$ follows from the strict monotonicity in (d).

(d) A direct computation gives $D^2(H^p/p)=H^{p-2}\big[(p-1)\nabla H\otimes\nabla H+HD^2H\big]$. Both $\nabla H\otimes\nabla H$ and $HD^2H$ are nonnegative matrices, the latter by convexity of $H$. Therefore the bracket lies between $\min\{1,p-1\}$ and $\max\{1,p-1\}$ times $\nabla H\otimes\nabla H+HD^2H$, and \eqref{eq_unif_convex} gives \eqref{eq_ellipticity}. For $\xi\neq\eta$ the segment $\eta+t(\xi-\eta)$ meets the origin for at most one $t\in[0,1]$, so
\[
(A(\xi)-A(\eta))\cdot(\xi-\eta)=\int_0^1DA\big(\eta+t(\xi-\eta)\big)(\xi-\eta)\cdot(\xi-\eta)\,dt>0.
\]
The last two bounds follow from (a) and (c).

(e) Since $x\cdot\nabla H_0(x)=H_0(x)>0$, the gradient of $H_0$ does not vanish, and $\partial\mathcal{W}_r=\{H_0=r\}$ is a $C^1$ hypersurface. The formulas for $x\cdot\nu$ and $H(\nu)$ follow from Euler's identity and (b). The coarea formula gives $|\mathcal{W}_r|=\int_0^r\int_{\partial\mathcal{W}_s}|\nabla H_0|^{-1}dS\,ds$. As $\nabla H_0$ is homogeneous of degree zero, the inner integral equals $s^{N-1}$ times its value at $s=1$, hence is continuous in $s$. Differentiating $|\mathcal{W}_r|=r^N|\mathcal{W}_1|$ gives the last identity.

(f) By (b), $A(\nabla H_0(y))=\nabla H(\nabla H_0(y))=y/H_0(y)$ for $y\neq 0$. In the first case $\nabla v=f'(H_0)\nabla H_0$ with $f'\ge 0$, so $A(\nabla v)=(f')^{p-1}x/H_0$. In the second case $\nabla\hat H_0(x)=-\nabla H_0(-x)$ gives $\nabla v(x)=|f'(\hat H_0(x))|\,\nabla H_0(-x)$, hence $A(\nabla v(x))=|f'|^{p-1}(-x)/H_0(-x)=|f'|^{p-2}f'\,x/\hat H_0(x)$. For $\psi\in C^1$ one has $\operatorname{div}(\psi(G)x)=N\psi(G)+G\psi'(G)$, because $x\cdot\nabla G=G$. With $\psi(t)=|f'(t)|^{p-2}f'(t)/t$ this equals $t^{1-N}(t^N\psi)'$, which is the claim. The vector field $A(\nabla v)$ is then $C^1$, so the pointwise divergence is also the weak one.
\end{proof}

The next lemma turns weak divergence identities into identities for fluxes through Wulff spheres. We use it repeatedly.

\begin{lemma}\label{lem_flux}
Let $D\subset\mathbb{R}^N$ be open and let $G\in C(D;\mathbb{R}^N)$ satisfy $\operatorname{div}G=g$ in $\mathcal{D}'(D)$ for some $g\in L^1_{\rm loc}(D)$. If $0<t<s$ and $\overline{\mathcal{W}_s}\setminus\mathcal{W}_t\subset D$, then
\[
\int_{\partial\mathcal{W}_s}G\cdot\nu\,dS-\int_{\partial\mathcal{W}_t}G\cdot\nu\,dS=\int_{\mathcal{W}_s\setminus\mathcal{W}_t}g\,dx.
\]
If $\overline{\mathcal{W}_s}\subset D$, then $\int_{\partial\mathcal{W}_s}G\cdot\nu\,dS=\int_{\mathcal{W}_s}g\,dx$.
\end{lemma}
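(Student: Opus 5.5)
The approach is to mollify the distributional identity and pass to the limit, using the coarea formula for the function $H_0$ to turn volume integrals into fluxes through the spheres $\partial\mathcal{W}_r$.

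Fix a cutoff in the radial variable $H_0$. For $0<t<s$ and small $\varepsilon>0$, let $\eta_\varepsilon:\mathbb{R}\to[0,1]$ be the piecewise linear function that equals $1$ on $[t+\varepsilon,s-\varepsilon]$, vanishes outside $(t,s)$, and is linear on the two transition intervals. Set $\varphi_\varepsilon=\eta_\varepsilon(H_0(x))$. This is a Lipschitz function with compact support in $\overline{\mathcal{W}_s}\setminus\mathcal{W}_t\subset D$. The distributional identity $\operatorname{div}G=g$ extends from $C^\infty_c(D)$ to compactly supported Lipschitz test functions. To see this, mollify $\varphi_\varepsilon$: the mollified functions converge uniformly, with gradients bounded and converging a.e. Since $G$ is continuous, hence locally bounded, and $g\in L^1_{\rm loc}$, dominated convergence applies. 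This gives
\[
-\int_D G\cdot\nabla\varphi_\varepsilon\,dx=\int_D g\,\varphi_\varepsilon\,dx .
\]

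Now compute the left-hand side. We have $\nabla\varphi_\varepsilon=\eta_\varepsilon'(H_0)\nabla H_0$, and this is supported on the two thin shells $\{s-\varepsilon<H_0<s\}$ and $\{t<H_0<t+\varepsilon\}$. By the coarea formula with respect to $H_0$, which is $C^1$ with nonvanishing gradient by Lemma \ref{lem_norm}(b),(e), and using $\nu=\nabla H_0/|\nabla H_0|$,
\[
-\int G\cdot\nabla\varphi_\varepsilon\,dx=\frac1\varepsilon\int_{s-\varepsilon}^{s}\Phi(r)\,dr-\frac1\varepsilon\int_t^{t+\varepsilon}\Phi(r)\,dr,\qquad \Phi(r)=\int_{\partial\mathcal{W}_r}G\cdot\nu\,dS .
\]

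The remaining point is that $\Phi$ is continuous in $r$ on a neighbourhood of $[t,s]$ contained in $D$. To check this, parametrize $\partial\mathcal{W}_r=r\,\partial\mathcal{W}_1$. Since $\nu$ is homogeneous of degree zero, $\Phi(r)=r^{N-1}\int_{\partial\mathcal{W}_1}G(ry)\cdot\nu(y)\,dS(y)$, and continuity follows from the uniform continuity of $G$ on compacts. The inclusion $\overline{\mathcal{W}_s}\setminus\mathcal{W}_t\subset D$ together with openness of $D$ gives a slightly larger compact annulus inside $D$. Letting $\varepsilon\to0$, the averages converge to $\Phi(s)-\Phi(t)$. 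On the right-hand side, $\varphi_\varepsilon\to\chi_{\mathcal{W}_s\setminus\mathcal{W}_t}$ pointwise except on the null set $\partial\mathcal{W}_s\cup\partial\mathcal{W}_t$, and dominated convergence gives $\int_{\mathcal{W}_s\setminus\mathcal{W}_t}g$. This proves the first identity.

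For the second statement, use the cutoff $\eta_\varepsilon(H_0)$ with $\eta_\varepsilon=1$ on $[0,s-\varepsilon]$. This is Lipschitz near the origin because $H_0$ is Lipschitz, and it is supported in $\overline{\mathcal{W}_s}\subset D$. The same computation then produces only the outer boundary term.

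The only delicate point is justifying Lipschitz test functions and the continuity of $\Phi$. Both are routine given the continuity of $G$ and the homogeneity of $\nu$.
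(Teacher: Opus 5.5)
Your proposal is correct and follows the paper's proof essentially verbatim. You test with a piecewise linear cutoff of $H_0$, convert the shell integrals into averages of $\Phi(r)=\int_{\partial\mathcal{W}_r}G\cdot\nu\,dS$ by the coarea formula, and obtain continuity of $\Phi$ from the scaling $\Phi(r)=r^{N-1}\int_{\partial\mathcal{W}_1}G(ry)\cdot\nu(y)\,dS_y$. The only differences are cosmetic: you place the inner transition layer in $(t,t+\varepsilon)$ rather than $(t-\varepsilon,t)$, so the test function stays supported in $\overline{\mathcal{W}_s}\setminus\mathcal{W}_t$ itself, and you spell out the density argument for Lipschitz test functions that the paper leaves implicit.
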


\begin{proof}
For small $\varepsilon>0$ let $\psi_\varepsilon$ be the piecewise linear function equal to $1$ on $[t,s-\varepsilon]$, to $0$ outside $(t-\varepsilon,s)$, and linear in between, and test the equation with $\psi_\varepsilon(H_0(x))$ (a Lipschitz function with compact support in $D$). This gives
\[
\frac1\varepsilon\int_{\{s-\varepsilon<H_0<s\}}G\cdot\nabla H_0\,dx-\frac1\varepsilon\int_{\{t-\varepsilon<H_0<t\}}G\cdot\nabla H_0\,dx=\int g\,\psi_\varepsilon(H_0)\,dx.
\]
By the coarea formula, the first term equals $\frac1\varepsilon\int_{s-\varepsilon}^s\Phi(\tau)\,d\tau$ with $\Phi(\tau)=\int_{\partial\mathcal{W}_\tau}G\cdot\nu\,dS$. Writing $x=\tau y$ with $y\in\partial\mathcal{W}_1$ shows $\Phi(\tau)=\tau^{N-1}\int_{\partial\mathcal{W}_1}G(\tau y)\cdot\nu(y)\,dS_y$, which is continuous in $\tau$ because $G$ is continuous. Letting $\varepsilon\to 0$ gives the first identity. The second is proved in the same way with $\psi_\varepsilon=1$ on $[0,s-\varepsilon]$.
\end{proof}

\subsection{Regularity, Harnack inequalities and comparison}

We use the following known estimates. Their structural hypotheses are those listed in Lemma \ref{lem_norm}(d), namely $A(\xi)\cdot\xi\ge m_H^p|\xi|^p$, $|A(\xi)|\le C|\xi|^{p-1}$ and \eqref{eq_ellipticity}. We write $B_r(x_0)$ for Euclidean balls.

\begin{lemma}\label{lem_harnack}
Let $\Omega\subset\mathbb{R}^N$ be open and $B_{4r}(x_0)\subset\Omega$.
\begin{enumerate}[label=\textup{(\roman*)}]
\item \textup{(Weak Harnack inequality \cite{Tru67}.)} If $u\in W^{1,p}_{\rm loc}(\Omega)$ is a nonnegative weak supersolution, $-\Delta_p^Hu\ge 0$, and $0<\gamma<N(p-1)/(N-p)$, then
\[
\Big(\frac{1}{|B_{2r}|}\int_{B_{2r}(x_0)}u^\gamma dx\Big)^{1/\gamma}\le C\inf_{B_r(x_0)}u,
\]
with $C$ depending on $N,p,H,\gamma$.
\item \textup{(Harnack inequality \cite{Ser64,Tru67}.)} If $u\in W^{1,p}_{\rm loc}(\Omega)$ is a positive weak solution of $-\Delta_p^Hu=\mathcal{V}u^{p-1}$ with $0\le\mathcal{V}\le\Lambda_0$ in $B_{4r}(x_0)$, then $\sup_{B_r(x_0)}u\le C\inf_{B_r(x_0)}u$ with $C$ depending on $N,p,H$ and $\Lambda_0r^p$.
\item \textup{($C^{1,\gamma}$ estimates \cite{DiB83,Tol84,Lie91}.)} If $u\in W^{1,p}(B_{2r}(x_0))\cap L^\infty(B_{2r}(x_0))$ is a weak solution of $-\Delta_p^Hu=f$ with $f\in L^\infty(B_{2r}(x_0))$, then $\nabla u\in C^{0,\gamma}(\overline{B_r(x_0)})$, where $\gamma\in(0,1)$ is structural and the $C^{1,\gamma}$ norm of $u$ on $B_r(x_0)$ is bounded in terms of $N,p,H,r$, $\|u\|_{L^\infty(B_{2r}(x_0))}$ and $\|f\|_{L^\infty(B_{2r}(x_0))}$.
\end{enumerate}
\end{lemma}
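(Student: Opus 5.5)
The lemma collects known results, so the plan is to check that $A$ satisfies the structural hypotheses of each cited theorem and then apply it. None of the cited results uses oddness of $A$ or any symmetry of the operator; they only use growth, coercivity and (for (iii)) ellipticity of $DA$. So the non-reversibility of $H$ plays no role here. I will work throughout with Euclidean balls $B_r(x_0)$, since that is the setting of the cited references; Wulff balls enter only later through Lemma \ref{lem_norm}(a).

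\emph{Step 1: structure conditions.} By Lemma \ref{lem_norm}(c),(d), the field $A$ is continuous and satisfies
\[
A(\xi)\cdot\xi\ge m_H^p|\xi|^p,\qquad |A(\xi)|\le C|\xi|^{p-1}.
\]
These are exactly Serrin's and Trudinger's structure conditions for quasilinear operators $\operatorname{div}\mathcal{A}(x,u,\nabla u)+\mathcal{B}(x,u,\nabla u)$ with $p$-growth. Hence (i) follows from the weak Harnack inequality of \cite{Tru67}, in the sharp range $0<\gamma<N(p-1)/(N-p)$.

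\emph{Step 2: the Harnack inequality (ii).} Here the lower-order term is $\mathcal{B}=\mathcal{V}u^{p-1}$ with $0\le\mathcal{V}\le\Lambda_0$, so $|\mathcal{B}|\le\Lambda_0|u|^{p-1}$. This is admissible in \cite{Ser64,Tru67} with a bounded coefficient. To see that the constant depends only on $N$, $p$, $H$ and $\Lambda_0r^p$, rescale: $u_r(y)=u(x_0+ry)$ solves the same equation on $B_4(0)$ with potential $r^p\mathcal{V}(x_0+ry)$. This works because $A$ is positively $(p-1)$-homogeneous, and only positive scalings are used, so the lack of evenness does not matter.

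\emph{Step 3: the $C^{1,\gamma}$ estimates (iii).} This is the step requiring the most care. DiBenedetto and Tolksdorf assume structure conditions on $DA$, and Lieberman's general framework \cite{Lie91} asks for $A\in C^1(\mathbb{R}^N\setminus\{0\})$ with
\[
\lambda|\xi|^{p-2}|\eta|^2\le DA(\xi)\eta\cdot\eta,\qquad |DA(\xi)|\le\Lambda|\xi|^{p-2}.
\]
By \eqref{eq_ellipticity} together with $m_H|\xi|\le H(\xi)\le M_H|\xi|$, we have $H(\xi)^{p-2}$ comparable to $|\xi|^{p-2}$, for either sign of $p-2$. This gives both conditions with structural constants, using only $H\in C^2(\mathbb{R}^N\setminus\{0\})$. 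Lieberman's theorem with a bounded right-hand side $f$ then yields the local $C^{1,\gamma}$ bound in terms of $\|u\|_{L^\infty(B_{2r})}$ and $\|f\|_{L^\infty(B_{2r})}$. The dependence on $r$ is again obtained by scaling, and $\gamma$ is structural.

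The only point I expect to need checking is that \cite{Lie91} requires no more regularity of $A$ than continuity of $DA$ away from the origin, and in particular no evenness. I would verify this against the statement of his structure condition; if it failed, I would fall back on \cite{DiB83,Tol84}.
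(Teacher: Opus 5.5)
Your proposal is correct and takes the paper's approach. The paper gives no separate proof: it cites Trudinger, Serrin, DiBenedetto, Tolksdorf and Lieberman after noting that the only structural hypotheses needed are the coercivity, growth and ellipticity bounds of Lemma \ref{lem_norm}(d). That is exactly what you verify, together with the comparison $H(\xi)^{p-2}\simeq|\xi|^{p-2}$ and the positive-scaling argument for the dependence on $r$.
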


A consequence of (i) is the strong minimum principle: a continuous nonnegative weak supersolution in a connected open set which vanishes at one point vanishes identically, since the set of its zeros is then open and closed.

\begin{lemma}\label{lem_locreg}
Let $\Omega\subset\mathbb{R}^N$ be open and let $u\in W^{1,p}_{\rm loc}(\Omega)$, $u\ge 0$, be a weak solution of $-\Delta_p^Hu=u^{p^*-1}$ in $\Omega$. Then $u\in L^\infty_{\rm loc}(\Omega)\cap C^{1,\gamma}_{\rm loc}(\Omega)$ with $\gamma$ structural. On each connected component of $\Omega$, either $u\equiv 0$ or $u>0$.
\end{lemma}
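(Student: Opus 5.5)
The plan is to first obtain local boundedness, then $C^{1,\gamma}$ regularity from Lemma \ref{lem_harnack}(iii), and finally positivity from the strong minimum principle.

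\emph{Step 1: local boundedness.} This is the main obstacle, because $p^*-1$ is the critical exponent, so $f=u^{p^*-1}$ is a priori only in $L^{p^*/(p^*-1)}_{\rm loc}$. We write the equation as $-\Delta_p^Hu=\mathcal{V}u^{p-1}$ with $\mathcal{V}=u^{p^*-p}\in L^{N/p}_{\rm loc}(\Omega)$, since $(p^*-p)N/p=p^*$. We then run a Moser-type iteration (in the style of Serrin \cite{Ser64} and Trudinger \cite{Tru67}). Fix a ball $B_{2r}(x_0)\Subset\Omega$ and test with $\eta^p\,\min\{u,k\}^{p(\beta-1)}u$, where $\beta\ge 1$ and $\eta$ is a cutoff. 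The structural bounds $A(\xi)\cdot\xi\ge m_H^p|\xi|^p$ and $|A(\xi)|\le C|\xi|^{p-1}$ from Lemma \ref{lem_norm}(d) give a Caccioppoli inequality for $v=\eta\,\min\{u,k\}^{\beta-1}u$. The potential term is $\int\mathcal{V}v^p\le\|\mathcal{V}\|_{L^{N/p}(\operatorname{supp}\eta)}\|v\|_{L^{p^*}}^p$. Absolute continuity of the integral lets us shrink the ball until $\|\mathcal{V}\|_{L^{N/p}}$ is small, so this term is absorbed by the Sobolev inequality. This yields $u\in L^{q}$ near $x_0$ for some $q>p^*$, after which $\mathcal{V}\in L^{s}$ with $s>N/p$. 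Standard Moser iteration (or \cite[Theorem 1]{Ser64} directly) then gives $u\in L^\infty_{\rm loc}$. A covering argument makes this hold on all of $\Omega$.

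\emph{Step 2: $C^{1,\gamma}$ regularity.} With $u$ locally bounded, $f=u^{p^*-1}\in L^\infty_{\rm loc}$. Lemma \ref{lem_harnack}(iii) applied on small balls then gives $u\in C^{1,\gamma}_{\rm loc}(\Omega)$ with structural $\gamma$.

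\emph{Step 3: dichotomy.} Since $u\ge0$, we have $-\Delta_p^Hu=u^{p^*-1}\ge0$, so $u$ is a continuous nonnegative weak supersolution. By the strong minimum principle stated after Lemma \ref{lem_harnack}, on each connected component either $u$ has a zero, in which case $u\equiv0$ there, or $u>0$ there. Alternatively, Lemma \ref{lem_harnack}(ii) with $\mathcal{V}=u^{p^*-p}$, now bounded, gives the same conclusion.
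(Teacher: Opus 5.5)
Your proposal is correct and follows the paper's proof essentially step by step. The shared route is: Brezis--Kato absorption with the test function $\eta^p u\min\{u,k\}^{p(\beta-1)}$ on balls small enough that $\|u^{p^*-p}\|_{L^{N/p}}$ can be absorbed via Sobolev, then Serrin's local boundedness theorem, then Lemma \ref{lem_harnack}(iii), and finally the strong minimum principle. The only difference is that the paper iterates to reach $u\in L^s_{\rm loc}$ for every $s<\infty$, whereas you rightly note that one step (e.g.\ $\beta=N/(N-p)$, giving $\mathcal{V}\in L^t$ with $t>N/p$) already suffices for \cite[Theorem 1]{Ser64}.
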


\begin{proof}
Write $\mathcal{V}=u^{p^*-p}$, so that $-\Delta_p^Hu=\mathcal{V}u^{p-1}$ and $\mathcal{V}\in L^{N/p}_{\rm loc}(\Omega)$ because $(p^*-p)N/p=p^*$. The difficulty is that $L^{N/p}$ is the critical space for this potential, so we first improve the integrability of $u$ by the absorption argument of Brezis and Kato \cite{BK79}.

Let $B_{2\rho}(x_0)\Subset\Omega$, let $\eta\in C^\infty_c(B_{2\rho}(x_0))$ with $0\le\eta\le 1$, and let $\beta\ge 1$, $k>0$, $u_k=\min\{u,k\}$. The function $\varphi=\eta^pu\,u_k^{p(\beta-1)}$ belongs to $W^{1,p}_0(B_{2\rho}(x_0))\cap L^{p^*}$, and the weak formulation extends to it by density since $u^{p^*-1}\in L^{(p^*)'}_{\rm loc}$. We have $A(\nabla u)\cdot\nabla u=H(\nabla u)^p$, $A(\nabla u)\cdot\nabla u_k=H(\nabla u)^p\chi_{\{u<k\}}$, and $A(\nabla u)\cdot\zeta\le H(\nabla u)^{p-1}H(\zeta)$ by Lemma \ref{lem_norm}(a),(c). Hence
\[
\int\eta^pu_k^{p(\beta-1)}H(\nabla u)^p\big(1+p(\beta-1)\chi_{\{u<k\}}\big)\le\int\mathcal{V}\eta^pu^pu_k^{p(\beta-1)}+p\int\eta^{p-1}u\,u_k^{p(\beta-1)}H(\nabla u)^{p-1}H(-\nabla\eta).
\]
By Young's inequality and Lemma \ref{lem_norm}(a), the last term is at most half of the left-hand side plus $C\int|\nabla\eta|^pu^pu_k^{p(\beta-1)}$. Since $\nabla(u\,u_k^{\beta-1})=u_k^{\beta-1}\big(1+(\beta-1)\chi_{\{u<k\}}\big)\nabla u$ and $(1+(\beta-1)\chi)^p\le\beta^{p-1}(1+p(\beta-1)\chi)$ for $\chi\in\{0,1\}$, the function $v=\eta\,u\,u_k^{\beta-1}$ satisfies
\[
\int H(\nabla v)^p\le C\beta^{p-1}\Big(\int\mathcal{V}v^p+\int |\nabla\eta|^pu^pu_k^{p(\beta-1)}\Big).
\]
By the Sobolev inequality, Lemma \ref{lem_norm}(a) and H\"older's inequality,
\[
\|v\|_{L^{p^*}}^p\le C_1\beta^{p-1}\Big(\|\mathcal{V}\|_{L^{N/p}(B_{2\rho}(x_0))}\|v\|_{L^{p^*}}^p+\int |\nabla\eta|^pu^pu_k^{p(\beta-1)}\Big),
\]
with $C_1$ structural. The norm $\|v\|_{L^{p^*}}$ is finite because $u_k\le k$. If $C_1\beta^{p-1}\|\mathcal{V}\|_{L^{N/p}(B_{2\rho}(x_0))}\le 1/2$, we absorb the first term and let $k\to\infty$ to get
\begin{equation}\label{eq_bk}
\|\eta\,u^\beta\|_{L^{p^*}}^p\le 2C_1\beta^{p-1}\int |\nabla\eta|^pu^{p\beta}\,dx.
\end{equation}
Fix $s<\infty$ and a compact set $\mathcal{K}\subset\Omega$. Let $\beta_0=N/(N-p)$ and $\beta_{i+1}=\beta_iN/(N-p)$, and let $n$ be the first index with $p^*\beta_n\ge s$. Since $\int_{B_{2\rho}(x)}\mathcal{V}^{N/p}=\int_{B_{2\rho}(x)}u^{p^*}\to 0$ uniformly for $x\in\mathcal{K}$ as $\rho\to 0$, we may choose $\rho$ so small that the smallness condition holds for $\beta=\beta_n$, hence for all $\beta_i$, $i\le n$, on all balls centered in $\mathcal{K}$. Applying \eqref{eq_bk} $n+1$ times on shrinking concentric balls, starting from $u^{\beta_0}\in L^p_{\rm loc}$ (that is, $u\in L^{p^*}_{\rm loc}$), gives $u\in L^s$ near every point of $\mathcal{K}$. Hence $u\in L^s_{\rm loc}(\Omega)$ for every $s<\infty$.

Consequently $\mathcal{V}\in L^t_{\rm loc}(\Omega)$ for some $t>N/p$, and the local boundedness theorem of Serrin \cite[Theorem 1]{Ser64} gives $u\in L^\infty_{\rm loc}(\Omega)$. The right-hand side $u^{p^*-1}$ is then locally bounded, and Lemma \ref{lem_harnack}(iii) gives $u\in C^{1,\gamma}_{\rm loc}(\Omega)$. The last assertion is the strong minimum principle following Lemma \ref{lem_harnack}.
\end{proof}

\begin{lemma}[Comparison]\label{lem_comparison}
Let $D$ be a bounded open set and let $u,v\in W^{1,p}(D)\cap C(\overline D)$ satisfy $-\Delta_p^Hu\ge-\Delta_p^Hv$ weakly in $D$, that is $\int_DA(\nabla u)\cdot\nabla\varphi\ge\int_DA(\nabla v)\cdot\nabla\varphi$ for all $0\le\varphi\in W^{1,p}_0(D)$. If $u\ge v$ on $\partial D$, then $u\ge v$ in $D$.
\end{lemma}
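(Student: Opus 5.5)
The plan is to test the weak inequality with the positive part of $v-u$, the classical weak comparison argument, and then use the strict monotonicity of $A$ from Lemma \ref{lem_norm}(d).

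Fix $\varepsilon>0$ and set $\varphi_\varepsilon=(v-u-\varepsilon)^+$.

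\emph{Admissibility.} Since $u,v\in C(\overline D)$ and $u\ge v$ on $\partial D$, the set $\{v-u\ge\varepsilon\}$ is a closed subset of $\overline D$ disjoint from $\partial D$, hence a compact subset of $D$. Therefore $\varphi_\varepsilon\in W^{1,p}(D)$ is nonnegative and has compact support in $D$, so $\varphi_\varepsilon\in W^{1,p}_0(D)$. This is the role of the $\varepsilon$-shift: it avoids any trace argument on a possibly irregular boundary.

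\emph{Testing.} With $\nabla\varphi_\varepsilon=(\nabla v-\nabla u)\chi_{\{v-u>\varepsilon\}}$ a.e., the hypothesis gives
\[
\int_{\{v-u>\varepsilon\}}\big(A(\nabla v)-A(\nabla u)\big)\cdot(\nabla v-\nabla u)\,dx\le 0.
\]
By Lemma \ref{lem_norm}(d) the integrand is nonnegative, and it vanishes only where $\nabla u=\nabla v$. Hence $\nabla u=\nabla v$ a.e. on $\{v-u>\varepsilon\}$, which means $\nabla\varphi_\varepsilon=0$ a.e. in $D$.

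\emph{Conclusion.} Extend $\varphi_\varepsilon$ by zero to $\mathbb{R}^N$. The extension lies in $W^{1,p}(\mathbb{R}^N)$, has compact support and has zero gradient a.e. A Sobolev function on $\mathbb{R}^N$ with vanishing gradient is constant, so $\varphi_\varepsilon\equiv 0$, and then $v\le u+\varepsilon$ in $D$. Letting $\varepsilon\to 0$ gives $u\ge v$.

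There is no real obstacle here. The only points needing care are the membership $\varphi_\varepsilon\in W^{1,p}_0(D)$, handled by the $\varepsilon$-shift and compact support, and the use of strict rather than mere monotonicity. The strict monotonicity holds even when the segment between $\nabla u$ and $\nabla v$ passes through the origin, as shown in Lemma \ref{lem_norm}(d). Non-reversibility of $H$ plays no role.
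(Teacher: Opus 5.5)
Your proof is correct and follows the paper's argument: test the inequality with the positive part of $v-u$ and use the strict monotonicity of $A$ from Lemma \ref{lem_norm}(d) to force the gradient of the test function to vanish. The only difference is your $\varepsilon$-shift, which makes explicit why the test function lies in $W^{1,p}_0(D)$ (the paper asserts $(v-u)^+\in W^{1,p}_0(D)$ directly from continuity and the boundary inequality), and your extension by zero settles the conclusion even if $D$ is not connected.
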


\begin{proof}
The function $\varphi=(v-u)^+$ belongs to $W^{1,p}_0(D)$ because $v-u$ is continuous on $\overline D$ and nonpositive on $\partial D$. Testing gives $\int_D(A(\nabla v)-A(\nabla u))\cdot\nabla(v-u)^+\le 0$. By strict monotonicity (Lemma \ref{lem_norm}(d)), $\nabla(v-u)^+=0$ a.e., so $(v-u)^+=0$.
\end{proof}

\begin{lemma}[Strong comparison at a noncritical contact]\label{lem_strongcomp}
Let $D$ be a connected open set and let $u,v\in C^1(D)$ be weak solutions of $\Delta_p^Hu=\Delta_p^Hv=0$ in $D$ with $u\ge v$ in $D$. Assume that at every point $x$ of $D$ with $u(x)=v(x)$ one has $\nabla v(x)\neq 0$. Then either $u\equiv v$ or $u>v$ in $D$.
\end{lemma}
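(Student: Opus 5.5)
The argument linearizes the equation near contact points and then uses an open–closed argument. Let $Z=\{x\in D:u(x)=v(x)\}$. It is relatively closed in $D$ because $u,v$ are continuous. Since $D$ is connected, it suffices to show that $Z$ is open: then either $Z=\emptyset$, in which case $u>v$, or $Z=D$, in which case $u\equiv v$.

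Fix $x_0\in Z$. By hypothesis $\nabla v(x_0)\neq 0$. Since $u-v\ge 0$ attains its minimum $0$ at the interior point $x_0$, we also have $\nabla u(x_0)=\nabla v(x_0)$. Because $u,v\in C^1$, there is a ball $B=B_\rho(x_0)\Subset D$ on which
\[
\tfrac12|\nabla v(x_0)|\le|\nabla u|,\ |\nabla v|\le 2|\nabla v(x_0)|.
\]
Moreover, every segment $\nabla v(x)+t(\nabla u(x)-\nabla v(x))$ with $t\in[0,1]$ stays in a compact set $\mathcal K\subset\mathbb{R}^N\setminus\{0\}$; shrinking $\rho$ if needed, this follows from the uniform closeness of both gradients to $\nabla v(x_0)$.

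Set $w=u-v\ge 0$. Subtracting the weak equations gives
\[
\int_B \mathcal{M}(x)\nabla w\cdot\nabla\varphi\,dx=0\quad\text{for all }\varphi\in C^\infty_c(B),
\qquad
\mathcal{M}(x)=\int_0^1 DA\big(\nabla v+t\nabla w\big)\,dt.
\]
Since $A\in C^1(\mathbb{R}^N\setminus\{0\})$ by Lemma \ref{lem_norm}(c), the coefficients $\mathcal{M}$ are measurable, in fact continuous, and they are symmetric. By \eqref{eq_ellipticity} and the two-sided bounds on $H$ over $\mathcal K$, the matrix $\mathcal{M}$ is uniformly elliptic and bounded on $B$. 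Thus $w$ is a nonnegative weak solution, in particular a supersolution, of a linear uniformly elliptic equation in divergence form with bounded measurable coefficients.

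The weak Harnack inequality (the linear case $p=2$ of \cite{Tru67}, i.e.\ De Giorgi--Moser) then gives, for a smaller ball $B_r(x_0)$, the bound $\big(\fint_{B_{2r}}w\big)\le C\inf_{B_r}w=C\,w(x_0)=0$. Hence $w\equiv 0$ near $x_0$, and $Z$ is open.

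The only delicate point is the nondegeneracy of $\mathcal{M}$. It requires that the whole segment between $\nabla u$ and $\nabla v$ avoid the origin, which is where both $\nabla v(x_0)\neq0$ and $\nabla u(x_0)=\nabla v(x_0)$ at the contact point are used. Away from contact points nothing needs to be checked, since there $u>v$ by definition.
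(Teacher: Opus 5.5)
Your proof is correct and follows the paper's argument. Both use the open--closed argument on the contact set and the segment linearization $\int_0^1 DA\big(\nabla v+t\nabla(u-v)\big)\,dt$, which stays uniformly elliptic near a contact point because both gradients are close to $\nabla v(x_0)\neq 0$. The only difference is that you conclude with the linear weak Harnack inequality directly, while the paper cites the strong maximum principle \cite[Theorem 8.19]{GT01}, which is itself a consequence of that inequality.
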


\begin{proof}
Let $Z=\{x\in D:u(x)=v(x)\}$, a relatively closed set. At $x_0\in Z$ the function $u-v\ge 0$ has a minimum, so $\nabla u(x_0)=\nabla v(x_0)\neq 0$. By continuity there is a ball $B\ni x_0$ on which all segments $[\nabla v(x),\nabla u(x)]$ lie in a compact subset of $\mathbb{R}^N\setminus\{0\}$. On $B$,
\[
A(\nabla u)-A(\nabla v)=\mathcal{B}(x)\nabla(u-v),\qquad \mathcal{B}(x)=\int_0^1DA\big(\nabla v+t(\nabla u-\nabla v)\big)\,dt,
\]
and $\mathcal{B}$ is bounded, measurable and uniformly elliptic on $B$ by \eqref{eq_ellipticity}. Thus $u-v\ge 0$ is a weak solution of the linear equation $\operatorname{div}(\mathcal{B}\nabla(u-v))=0$ vanishing at $x_0$. The strong maximum principle for linear equations in divergence form \cite[Theorem 8.19]{GT01} gives $u-v\equiv 0$ in $B$. Hence $Z$ is open, and connectedness gives the claim.
\end{proof}

\begin{remark}\label{rem_no_global_scp}
Lemma \ref{lem_strongcomp} is deliberately local. We never invoke a global nonlinear strong-comparison theorem for the anisotropic $p$-Laplacian. At a contact point with a noncritical comparator, both gradients are close to the same nonzero vector, so the segment linearization stays in a compact subset of $\mathbb R^N\setminus\{0\}$ and the problem reduces to the classical linear strong maximum principle. Hence no global strong comparison principle for $\Delta_p^H$ is needed, for any $1<p<N$.
\end{remark}

\subsection{The Pohozaev identity}

For a $C^1$ function $u$ and a continuous function $\mathcal{G}$ set
\begin{equation}\label{eq_pohozaev_field}
\mathbf{P}[u,\mathcal{G}](x)=\big(a\,u+x\cdot\nabla u\big)A(\nabla u)-x\Big(\frac{H(\nabla u)^p}{p}-\mathcal{G}(u)\Big).
\end{equation}

\begin{lemma}\label{lem_pohozaev}
Let $D$ be open, $f\in C(\mathbb{R})$, $\mathcal{G}(t)=\int_0^tf$, and let $u\in C^1(D)$ be a weak solution of $-\Delta_p^Hu=f(u)$ in $D$. Then
\[
\operatorname{div}\mathbf{P}[u,\mathcal{G}]=N\mathcal{G}(u)-a\,u\,f(u)\qquad\text{in }\mathcal{D}'(D).
\]
Consequently, by Lemma \ref{lem_flux}, for every Wulff annulus or ball as in that lemma,
\[
\int_{\partial\mathcal{W}_s}\mathbf{P}[u,\mathcal{G}]\cdot\nu\,dS-\int_{\partial\mathcal{W}_t}\mathbf{P}[u,\mathcal{G}]\cdot\nu\,dS=\int_{\mathcal{W}_s\setminus\mathcal{W}_t}\big(N\mathcal{G}(u)-a\,u\,f(u)\big)\,dx,
\]
with the inner boundary term absent for a ball.
\end{lemma}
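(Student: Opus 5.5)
The plan is to avoid second derivatives of $u$ entirely, since $u$ is only $C^1$ and $A$ is only $C^1$ away from the origin. Instead I would derive the identity by a domain variation (inner variation) argument whose limit is justified by convexity of $F=H^p/p$. Fix $\psi\in C^\infty_c(D)$ and the vector field $Y(x)=\psi(x)x$, and set $\Phi_t(x)=x+tY(x)$, a $C^\infty$ diffeomorphism equal to the identity near $\partial D$ for $|t|$ small. Then $u_t=u\circ\Phi_t\in C^1$, and $(u_t-u)/t$ is an admissible test function (compactly supported, $C^1$). Testing the equation gives
\[
\int_D A(\nabla u)\cdot\frac{\nabla u_t-\nabla u}{t}\,dx=\int_D f(u)\frac{u_t-u}{t}\,dx .
\]
As $t\to0$ the right-hand side tends to $\int f(u)\,Y\cdot\nabla u=\int Y\cdot\nabla\mathcal{G}(u)=-\int\mathcal{G}(u)\operatorname{div}Y$, by uniform convergence of difference quotients of a $C^1$ function.

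For the left-hand side, write $\nabla u_t=(I+tDY)^T(\nabla u)\circ\Phi_t$. The term $\int A(\nabla u)\cdot DY^T(\nabla u\circ\Phi_t)$ converges to $\int A(\nabla u)\cdot DY^T\nabla u$ by continuity. The remaining term is
\[
\frac1t\int A(\nabla u)\cdot\big((\nabla u)\circ\Phi_t-\nabla u\big).
\]
It cannot be differentiated directly, so I would sandwich it using convexity of $F$: $A(\xi)\cdot(\eta-\xi)\le F(\eta)-F(\xi)$. This bounds the term above by $t^{-1}\int\big(F(\nabla u\circ\Phi_t)-F(\nabla u)\big)$ for $t>0$, and below by the same quantity for $t<0$. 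By the change of variables $y=\Phi_t(x)$,
\[
\int F(\nabla u\circ\Phi_t)\,dx=\int F(\nabla u)\,\big|\det D\Phi_t^{-1}\big|\circ\Phi_t^{-1}\,dy,
\]
whose $t$-derivative at $0$ is $-\int F(\nabla u)\operatorname{div}Y$ exactly. Because the left-hand side of the tested equation has a limit (the right-hand side converges), the two one-sided inequalities force
\[
\lim_{t\to0}\frac1t\int A(\nabla u)\cdot\big(\nabla u\circ\Phi_t-\nabla u\big)=-\int F(\nabla u)\operatorname{div}Y .
\]
This yields the weak Pohozaev-type identity
\[
\int\Big[A(\nabla u)\cdot DY^T\nabla u-F(\nabla u)\operatorname{div}Y+\mathcal{G}(u)\operatorname{div}Y\Big]=0 .
\]
I expect this convexity sandwich to be the main obstacle. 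The naive route of mollifying $u$ and testing with $\psi\,x\cdot\nabla u_\delta$ leaves a commutator $(A(\nabla u)-A(\nabla u_\delta))\cdot D^2u_\delta x$ that the $C^{1,\gamma}$ bounds do not control for all $p$. The sandwich, by contrast, uses only that $u\in C^1$ and that $F$ is convex and $C^1$.

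It then remains to expand with $Y=\psi x$: $DY=\psi I+x\otimes\nabla\psi$ and $\operatorname{div}Y=N\psi+x\cdot\nabla\psi$. Using $A(\nabla u)\cdot\nabla u=pF(\nabla u)$ (Lemma \ref{lem_norm}(c)), the identity becomes
\[
\int\Big[(x\cdot\nabla u)A(\nabla u)\cdot\nabla\psi-\Big(F(\nabla u)-\mathcal{G}(u)\Big)x\cdot\nabla\psi\Big]=\int\Big[(N-p)F(\nabla u)-N\mathcal{G}(u)\Big]\psi .
\]
Separately, I would test the equation with $a\,u\,\psi$, which is admissible, to obtain
\[
\int a\,u\,A(\nabla u)\cdot\nabla\psi=\int\big(a\,u\,f(u)-a\,pF(\nabla u)\big)\psi .
\]
Adding the two identities, the gradient terms cancel since $ap=N-p$. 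The result is $-\int\mathbf{P}[u,\mathcal{G}]\cdot\nabla\psi=\int\big(N\mathcal{G}(u)-a\,u\,f(u)\big)\psi$, which is the distributional statement. Finally, $\mathbf{P}[u,\mathcal{G}]$ is continuous and the right-hand side is continuous, hence locally integrable, so Lemma \ref{lem_flux} applies directly and gives the flux form on Wulff annuli and balls.
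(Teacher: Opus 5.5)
Your argument is correct, but it takes a different route from the paper. The paper does the formal computation for smooth $u$ and then appeals, as a black box, to the Pucci--Serrin identity of Degiovanni, Musesti and Squassina. That result covers locally Lipschitz weak solutions of Euler--Lagrange equations whose Lagrangian, here $L(s,\xi)=H(\xi)^p/p-\mathcal{G}(s)$, is $C^1$ and strictly convex in $\xi$.

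You instead prove the needed special case directly, in four moves:
\begin{enumerate}
\item The inner variation $u\circ\Phi_t$ with $Y=\psi x$ yields the admissible $C^1_c$ test function $(u_t-u)/t$.
\item The change of variables moves all $t$-dependence of $\int F(\nabla u\circ\Phi_t)$ onto the smooth Jacobian, so this quantity is genuinely differentiable at $t=0$.
\item Convexity of $F$ gives opposite one-sided bounds for $t>0$ and $t<0$.
\item The equation itself guarantees that the two-sided limit of the tested left-hand side exists, and this pins the limit down.
\end{enumerate}
The $a\,u$ contribution then comes from the separate test function $a\,u\,\psi$. Your expansion with $DY=\psi\,\mathrm{Id}+x\otimes\nabla\psi$ and the cancellation via $ap=N-p$ are correct. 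The final passage to the flux form through Lemma \ref{lem_flux} is the same as in the paper.

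Your route is self-contained and needs slightly less. You use only that $F=H^p/p$ is $C^1$ and convex; strict convexity never enters. You also use that $u\in C^1$, which is what makes the difference quotients and $\nabla u\circ\Phi_t$ converge uniformly. The citation buys brevity and generality, such as Lagrangians with general $x$- and $s$-dependence and merely Lipschitz solutions, none of which is needed here.
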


\begin{proof}
For smooth $u$ this is a direct computation. Using $\nabla_x\big(H(\nabla u)^p/p\big)=D^2u\,A(\nabla u)$ one finds
\[
\operatorname{div}\big((x\cdot\nabla u)A(\nabla u)\big)-\operatorname{div}\Big(x\frac{H(\nabla u)^p}{p}\Big)=\Big(1-\frac Np\Big)H(\nabla u)^p+(x\cdot\nabla u)\operatorname{div}A(\nabla u),
\]
and $\operatorname{div}(a\,u\,A(\nabla u))=aH(\nabla u)^p+a\,u\operatorname{div}A(\nabla u)$, $\operatorname{div}(x\mathcal{G}(u))=N\mathcal{G}(u)+f(u)\,x\cdot\nabla u$. Since $a=(N-p)/p$ and $\operatorname{div}A(\nabla u)=-f(u)$, the terms involving $H(\nabla u)^p$ and $x\cdot\nabla u$ cancel. For $u\in C^1$ the identity is the Pucci--Serrin identity with the vector field $x$, which Degiovanni, Musesti and Squassina proved for $C^1$ (indeed locally Lipschitz) weak solutions of Euler--Lagrange equations of Lagrangians $L(s,\xi)$ of class $C^1$ and strictly convex in $\xi$ \cite[Theorem 1]{DMS03}. Here $L(s,\xi)=H(\xi)^p/p-\mathcal{G}(s)$, which satisfies these conditions by Lemma \ref{lem_norm}(d). A Pohozaev identity for weak solutions of Finsler problems is also proved by Montoro and Sciunzi \cite{MS23}. We use the Pucci--Serrin identity because it requires no regularity beyond Hypothesis~(H).
\end{proof}

\subsection{A Morrey-type estimate for supersolutions}

The following estimate is the consequence of the lower Wolff potential bound of Kilpel\"ainen and Mal\'y \cite{KM94} that the first-contact argument needs. Their framework assumes $A(t\xi)=|t|^{p-2}tA(\xi)$ for all real $t$, which fails when $H$ is not even, so we give a direct proof. It uses only the structural bounds of Lemma \ref{lem_norm}(d) and the weak Harnack inequality.

\begin{lemma}\label{lem_morrey}
Let $B_{4\rho}(x_0)\subset\Omega$ and let $u\in C(\Omega)\cap W^{1,p}_{\rm loc}(\Omega)$ be a positive weak solution of $-\Delta_p^Hu=g$ in $\Omega$ with $0\le g\in L^1_{\rm loc}(\Omega)$. Then
\[
\int_{B_\rho(x_0)}g\,dx\le C\rho^{N-p}u(x_0)^{p-1},
\]
with $C$ structural.
\end{lemma}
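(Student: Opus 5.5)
Test the equation with the function $\varphi=\eta^p u^{1-p}$ — no, rather $\varphi=\eta^p$ times a truncation — and rely on the weak Harnack inequality of Lemma \ref{lem_harnack}(i). I would take a cutoff $\eta\in C^\infty_c(B_{2\rho}(x_0))$ with $0\le\eta\le1$, $\eta=1$ on $B_\rho(x_0)$ and $|\nabla\eta|\le C/\rho$, and set $m=u(x_0)>0$ and $\varphi=\eta^p\min\{u,m\}/m$. This is a legitimate nonnegative test function: it lies in $W^{1,p}_0(B_{2\rho}(x_0))\cap L^\infty$ and $g\ge0$. The weak formulation gives
\[
\int g\,\eta^p\frac{\min\{u,m\}}{m}\,dx=\frac1m\int_{\{u<m\}}\eta^pH(\nabla u)^p\,dx+\frac pm\int\eta^{p-1}\min\{u,m\}\,A(\nabla u)\cdot\nabla\eta\,dx .
\]

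Rather than bounding the left side from below directly, I would use the standard Caccioppoli computation for the truncation $w=\min\{u,m\}$, which is itself a supersolution. Testing with $\eta^p(m-w)$ — nonnegative, $W^{1,p}_0$ — gives
\[
\int g\eta^p(m-w)+\int_{\{u<m\}}\eta^pH(\nabla u)^p=p\int\eta^{p-1}(m-w)A(\nabla u)\cdot\nabla\eta,
\]
and Young's inequality, $|A(\xi)|\le C|\xi|^{p-1}$ and $m-w\le m$ yield $\int_{\{u<m\}}\eta^pH(\nabla u)^p\le Cm^p\rho^{N-p}$. Then test with $\eta^p$ itself applied to the truncated problem. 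More cleanly: for the supersolution $w$, the measure $\mu_w=-\Delta_p^Hw\ge0$ satisfies $\mu_w\ge g$ on the open set $\{u<m\}$. Testing $u$'s equation with $\eta^p w/m$ and using the gradient bound gives
\[
\int g\eta^p\frac wm\le Cm^{p-1}\rho^{N-p},
\]
since the cross term is bounded by $\frac pm\, m\,(\int_{\{u<m\}}\eta^pH(\nabla u)^p)^{(p-1)/p}(C\rho^{N-p})^{1/p}$, plus the part on $\{u\ge m\}$.

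The main obstacle is that on $\{u\ge m\}$ the cross term involves $\nabla u$ where $u$ is large, which is not controlled by the first computation. I would handle this by instead choosing the truncation level $M=C_H\inf_{B_{2\rho}}u$ and invoking Lemma \ref{lem_harnack}(i) to compare. Actually a better route: use $\varphi=\eta^p\big(1-(u/m)^{... }\big)$ with the test function $\eta^p\min\{u,m\}^{1-p+\epsilon}$ type weights (Trudinger's $\varphi=\eta^pu^{1-p}$ argument for supersolutions gives $\int\eta^pH(\nabla\log u)^p\le C\rho^{N-p}$). With $\varphi=\eta^pu^{1-p}\cdot k^{p-1}$, where $k=\inf_{B_{2\rho}}u\le m$, one gets
\[
k^{p-1}\int g\eta^pu^{1-p}\le Ck^{p-1}\rho^{N-p}.
\]
On $B_\rho$, however, $u^{1-p}$ may be small where $u$ is large. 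So the final step is to apply instead $\varphi=\eta^p\min\{u,k\}^{\,\cdot}$, and use $\min\{u,k\}=k$ on $B_{2\rho}$, giving exactly $\int g\eta^p\le \frac{p}{k}\int\eta^{p-1}k\,|A(\nabla u)||\nabla\eta|$. Bounding $\int_{B_{2\rho}}\eta^{p}H(\nabla u)^{p-1}\rho^{-1}$ by $Ck^{p-1}\rho^{N-p}$ via a Caccioppoli inequality for $\log$ or for $u^{\gamma}$ with $\gamma<1$ ($\int\eta^pu^{\gamma-1-p}\ldots$), combined with Lemma \ref{lem_harnack}(i) to bound averages of $u^{\gamma}$ by $k^\gamma$ on a slightly larger ball, is the crux. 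Finally $k\le u(x_0)$ gives the claim.
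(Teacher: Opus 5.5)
Your final route is the skeleton of the paper's proof: test with $\eta^p$ (your $\eta^p\min\{u,k\}/k$ with $k=\inf_{B_{2\rho}}u$ is literally $\eta^p$ on its support), bound $\rho^{-1}\int\eta^{p-1}|\nabla u|^{p-1}$ by a weighted Caccioppoli inequality and H\"older, and close with Lemma \ref{lem_harnack}(i). But the step you call the crux is where all the content lies, you do not carry it out, and the weights you suggest for it fail in part of the range $1<p<N$.

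Suppose the Caccioppoli inequality controls $\int\eta^pu^{\gamma-p}|\nabla u|^p$ by $C\rho^{-p}\int_{B_{2\rho}}u^{\gamma}$; this needs $\gamma<p-1$. H\"older with exponents $p/(p-1)$ and $p$ then leaves the factor $\big(\int_{B_{2\rho}}u^{(p-\gamma)(p-1)}\big)^{1/p}$. Lemma \ref{lem_harnack}(i) controls this factor only if $(p-\gamma)(p-1)<N(p-1)/(N-p)$, that is, $\gamma>p-\frac{N}{N-p}$.
\begin{itemize}
\item The logarithmic estimate is $\gamma=0$. It needs $\int u^{p(p-1)}$, which is admissible only when $N(p-1)<p^2$. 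It fails, for example, for $p=2$ and $N\ge 4$.
\item Your restriction $\gamma<1$ points the wrong way. For $p>2$ and $N\ge p(p-1)/(p-2)$ there is no $\gamma<1$ with $\gamma>p-\frac{N}{N-p}$.
\end{itemize}

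The missing choice is $\gamma$ just below $p-1$, as in the paper. Test with $\eta^pu^{-\varepsilon}$, $0<\varepsilon<\min\{p-1,\,p/(N-p)\}$, to get $\int\eta^pu^{-1-\varepsilon}|\nabla u|^p\le C\rho^{-p}\int_{B_{2\rho}}u^{p-1-\varepsilon}$. Then split
\[
\int\eta^{p-1}|\nabla u|^{p-1}\le\Big(\int\eta^pu^{-1-\varepsilon}|\nabla u|^p\Big)^{\frac{p-1}{p}}\Big(\int_{B_{2\rho}}u^{(1+\varepsilon)(p-1)}\Big)^{\frac1p}.
\]
Both exponents $\gamma_1=p-1-\varepsilon$ and $\gamma_2=(1+\varepsilon)(p-1)$ lie in $(0,N(p-1)/(N-p))$. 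Each $\int_{B_{2\rho}}u^{\gamma_i}$ is then at most $C\rho^N(\inf_{B_\rho}u)^{\gamma_i}\le C\rho^Nu(x_0)^{\gamma_i}$. Since $(\gamma_1(p-1)+\gamma_2)/p=p-1$, the powers combine to exactly $\rho^{N-p}u(x_0)^{p-1}$.

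Two smaller points. First, Lemma \ref{lem_harnack}(i) with $B_{4\rho}(x_0)\subset\Omega$ yields $\inf_{B_\rho}u$, not your $k=\inf_{B_{2\rho}}u$. Passing to a ``slightly larger ball'' would require $B_{8\rho}(x_0)\subset\Omega$. Since only $\inf\le u(x_0)$ is used, simply drop $k$. Second, because $g$ is only in $L^1_{\rm loc}$, you must justify that $\eta^pu^{-\varepsilon}$ is an admissible test function. It is continuous, since $u$ is continuous and positive, so its mollifications converge to it uniformly and in $W^{1,p}$ with supports in a fixed compact set.
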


\begin{proof}
Let $C_A$ be a structural constant with $|A(\xi)|\le C_A|\xi|^{p-1}$, and fix $\varepsilon=\frac12\min\{p-1,\,p/(N-p)\}$. Then the two exponents $\gamma_1=p-1-\varepsilon$ and $\gamma_2=(1+\varepsilon)(p-1)$ lie in $(0,N(p-1)/(N-p))$. Let $\eta\in C^\infty_c(B_{2\rho}(x_0))$ with $0\le\eta\le 1$, $\eta=1$ on $B_\rho(x_0)$ and $|\nabla\eta|\le 2/\rho$. All integrals below are over $B_{2\rho}(x_0)$.

Testing the equation with $\eta^p$ gives
\begin{equation}\label{eq_morrey1}
\int_{B_\rho(x_0)}g\le\int g\,\eta^p=p\int\eta^{p-1}A(\nabla u)\cdot\nabla\eta\le\frac{2pC_A}{\rho}\int\eta^{p-1}|\nabla u|^{p-1}.
\end{equation}
Since $u$ is continuous and positive, $\varphi=\eta^pu^{-\varepsilon}$ is a continuous function in $W^{1,p}_0(B_{2\rho}(x_0))$. Its mollifications converge to it in $W^{1,p}$ and uniformly, with supports in a fixed compact set, so the weak formulation holds for $\varphi$ because $g\in L^1_{\rm loc}$. Thus $\varphi$ is an admissible nonnegative test function, and $\int A(\nabla u)\cdot\nabla(\eta^pu^{-\varepsilon})=\int g\eta^pu^{-\varepsilon}\ge 0$. Using $A(\nabla u)\cdot\nabla u\ge m_H^p|\nabla u|^p$ we get
\[
\varepsilon\,m_H^p\int\eta^pu^{-1-\varepsilon}|\nabla u|^p\le pC_A\int\eta^{p-1}u^{-\varepsilon}|\nabla u|^{p-1}|\nabla\eta|.
\]
Writing the integrand on the right as $\big(\eta^{p-1}u^{-(1+\varepsilon)(p-1)/p}|\nabla u|^{p-1}\big)\big(u^{(1+\varepsilon)(p-1)/p-\varepsilon}|\nabla\eta|\big)$ and applying Young's inequality with exponents $p/(p-1)$ and $p$, we obtain
\begin{equation}\label{eq_morrey2}
\int\eta^pu^{-1-\varepsilon}|\nabla u|^p\le C\int u^{\gamma_1}|\nabla\eta|^p\le C\rho^{-p}\int u^{\gamma_1}.
\end{equation}
By H\"older's inequality with the same exponents,
\[
\int\eta^{p-1}|\nabla u|^{p-1}\le\Big(\int\eta^pu^{-1-\varepsilon}|\nabla u|^p\Big)^{\frac{p-1}{p}}\Big(\int u^{\gamma_2}\Big)^{\frac1p}.
\]
By Lemma \ref{lem_harnack}(i), $\int_{B_{2\rho}(x_0)}u^{\gamma_i}\le C\rho^N\big(\inf_{B_\rho(x_0)}u\big)^{\gamma_i}\le C\rho^Nu(x_0)^{\gamma_i}$ for $i=1,2$. Combining this with \eqref{eq_morrey1} and \eqref{eq_morrey2},
\[
\int_{B_\rho(x_0)}g\le C\rho^{-1}\big(\rho^{N-p}u(x_0)^{\gamma_1}\big)^{\frac{p-1}{p}}\big(\rho^Nu(x_0)^{\gamma_2}\big)^{\frac1p}=C\rho^{N-p}u(x_0)^{p-1},
\]
because $-1+\frac{(N-p)(p-1)+N}{p}=N-p$ and $\frac{\gamma_1(p-1)+\gamma_2}{p}=p-1$.
\end{proof}

\subsection{Bubbles and the fundamental solution}

Recall $\mu(x)=\hat H_0(x)^{-\alpha}$.

\begin{lemma}\label{lem_bubble}
\begin{enumerate}[label=\textup{(\alph*)}]
\item The function $U$ in \eqref{eq_bubble} belongs to $C^1(\mathbb{R}^N)$, satisfies $A(\nabla U(x))=-\frac1N(1+K\hat H_0(x)^{p'})^{-b}x$, and solves $-\Delta_p^HU=U^{p^*-1}$ in $\mathbb{R}^N$. The same holds for every $U_{z,\lambda}$.
\item With $\kappa_U=K^{-a}$, one has $0<U\le\kappa_U\mu$ in $\mathbb{R}^N\setminus\{0\}$, $\hat H_0(x)^{\alpha}U(x)\to\kappa_U$ as $|x|\to\infty$, and
\[
M_U:=\int_{\mathbb{R}^N}U^{p^*-1}dx=|\mathcal{W}_1|K^{-b}=N|\mathcal{W}_1|(\alpha\kappa_U)^{p-1}.
\]
\item For $\kappa>0$ the function $\kappa\mu$ is a weak solution of $\Delta_p^H(\kappa\mu)=0$ in $\mathbb{R}^N\setminus\{0\}$, with
\[
A(\nabla(\kappa\mu))=-(\alpha\kappa)^{p-1}\hat H_0^{-N}x,\qquad\int_{\partial\hat{\mathcal{W}}_r}A(\nabla(\kappa\mu))\cdot\nu\,dS=-N|\mathcal{W}_1|(\alpha\kappa)^{p-1}.
\]
\end{enumerate}
\end{lemma}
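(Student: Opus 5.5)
The whole lemma reduces to the radial calculus of Lemma \ref{lem_norm}(f), applied in its second case, with $G=\hat H_0$ and a nonincreasing profile $f$. Recall that $\hat H$ satisfies (H), so Lemma \ref{lem_norm}(e),(f) apply to $\hat{\mathcal{W}}_r$. Also $|\hat{\mathcal{W}}_1|=|-\mathcal{W}_1|=|\mathcal{W}_1|$.

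For (a), write $U=f(\hat H_0)$ with $f(t)=(1+Kt^{p'})^{-a}$, and set $s=Kt^{p'}$. Then $f'(t)=-aKp'\,t^{p'-1}(1+s)^{-a-1}\le 0$, so Lemma \ref{lem_norm}(f) gives $A(\nabla U)=-|f'|^{p-1}x/\hat H_0$. We need three exponent facts: $(p'-1)(p-1)=1$, $(a+1)(p-1)=N(p-1)/p=b$, and $(aKp')^{p-1}=a^{p-1}\,c/N=1/N$ by the definitions of $K$ and $c$. With these,
\[
A(\nabla U)=-\tfrac1N(1+K\hat H_0^{p'})^{-b}x .
\]
To compute the divergence I will not invoke the last sentence of Lemma \ref{lem_norm}(f), which is only stated away from the origin. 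Instead I observe regularity directly. Since $p'>1$ and $\hat H_0\in C^1(\mathbb{R}^N\setminus\{0\})$ is $1$-homogeneous, $\hat H_0^{p'}$ is $C^1$ on all of $\mathbb{R}^N$, with gradient $p'\hat H_0^{p'-1}\nabla\hat H_0$ tending to $0$ at the origin. Hence both $U$ and the field $-\frac1N(1+K\hat H_0^{p'})^{-b}x$ are $C^1$ on $\mathbb{R}^N$, and the pointwise divergence is also the weak one. Using $\operatorname{div}(\psi(G)x)=N\psi(G)+G\psi'(G)$ from the proof of Lemma \ref{lem_norm}(f), I get
\[
\operatorname{div}A(\nabla U)=-(1+s)^{-b-1}\Big[(1+s)-\tfrac bN p'\,s\Big]=-(1+s)^{-b-1},
\]
because $\frac bNp'=1$. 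Finally $a(p^*-1)=\frac{N(p-1)+p}{p}=b+1$, so the right-hand side equals $-U^{p^*-1}$.

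For $U_{z,\lambda}$, translation invariance is clear. For scaling, the $(p-1)$-homogeneity of $A$ shows that $\operatorname{div}A(\nabla U_{z,\lambda})$ picks up the factor $\lambda^{-(a+1)(p-1)-1}=\lambda^{-b-1}$. The nonlinearity $U_{z,\lambda}^{p^*-1}$ picks up $\lambda^{-a(p^*-1)}=\lambda^{-b-1}$, so the two sides scale identically.

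For (b), the pointwise bound comes from $(1+s)^{-a}\le s^{-a}=K^{-a}\hat H_0^{-ap'}$ together with $ap'=\alpha$. The same computation gives the limit $\hat H_0^\alpha U\to\kappa_U$. For the mass, I apply Lemma \ref{lem_flux} on $\hat{\mathcal{W}}_s$ to the $C^1$ field $A(\nabla U)$, whose divergence is $-U^{p^*-1}$. On $\partial\hat{\mathcal{W}}_s$, Lemma \ref{lem_norm}(e) gives $\int x\cdot\nu\,dS=s\cdot N|\mathcal{W}_1|s^{N-1}$. Therefore
\[
\int_{\hat{\mathcal{W}}_s}U^{p^*-1}=|\mathcal{W}_1|\,s^N(1+Ks^{p'})^{-b}\ \longrightarrow\ |\mathcal{W}_1|K^{-b}
\]
as $s\to\infty$, since $p'b=N$; monotone convergence then gives $M_U=|\mathcal{W}_1|K^{-b}$. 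The second expression for $M_U$ reduces to $K^{-b+a(p-1)}=K^{-(p-1)}$ and $N(\alpha K)^{p-1}=1$. The latter holds because $\alpha=ap'$, so it is the identity $(aKp')^{p-1}=1/N$ already established in (a).

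For (c), take $f(t)=\kappa t^{-\alpha}$, so $f'=-\alpha\kappa t^{-\alpha-1}$. Since $(\alpha+1)(p-1)+1=N$, Lemma \ref{lem_norm}(f) yields $A(\nabla(\kappa\mu))=-(\alpha\kappa)^{p-1}\hat H_0^{-N}x$. Moreover $t^{N-1}|f'|^{p-2}f'$ is constant, so the same lemma gives $\Delta_p^H(\kappa\mu)=0$ in $\mathbb{R}^N\setminus\{0\}$. The flux follows again from Lemma \ref{lem_norm}(e):
\[
\int_{\partial\hat{\mathcal{W}}_r}A(\nabla(\kappa\mu))\cdot\nu\,dS=-(\alpha\kappa)^{p-1}r^{-N}\cdot rN|\mathcal{W}_1|r^{N-1}=-N|\mathcal{W}_1|(\alpha\kappa)^{p-1}.
\]
The only point needing care is the weak equation for $U$ across the origin, handled by the $C^1$ regularity of $\hat H_0^{p'}$; everything else is exponent bookkeeping.
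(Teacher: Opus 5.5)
Your proposal is correct and follows essentially the same route as the paper. Both proofs apply the second case of Lemma \ref{lem_norm}(f) with $G=\hat H_0$, use the same exponent identities, and invoke the $C^1$ regularity of $\hat H_0^{p'}$ to justify the equation across the origin. Both compute the mass through the flux on $\partial\hat{\mathcal{W}}_s$, and both obtain (c) from the constancy of $t^{N-1}|f'|^{p-2}f'$.
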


\begin{proof}
(a) Let $f(t)=(1+Kt^{p'})^{-a}$. Then $f'(t)=-aKp'\,t^{p'-1}(1+Kt^{p'})^{-a-1}$. Since $(p'-1)(p-1)=1$, $(a+1)(p-1)=b$ and $(aKp')^{p-1}=a^{p-1}c/N=1/N$,
\[
|f'|^{p-2}f'(t)=-\frac1N\,t\,(1+Kt^{p'})^{-b}.
\]
The second case of Lemma \ref{lem_norm}(f) gives the formula for $A(\nabla U)$. This vector field is $C^1$ in $\mathbb{R}^N$ because $\hat H_0^{p'}\in C^1(\mathbb{R}^N)$. Moreover
\[
t^{1-N}\Big(-\frac1N\,t^N(1+Kt^{p'})^{-b}\Big)'=-(1+Kt^{p'})^{-b-1}\Big(1+Kt^{p'}-\frac{bp'}{N}Kt^{p'}\Big)=-(1+Kt^{p'})^{-b-1},
\]
because $bp'=N$. Since $(b+1)/a=p^*-1$, this is $-f^{p^*-1}$. The equation is invariant under translations and under $u\mapsto\lambda^{-a}u(\cdot/\lambda)$, because $(a+1)(p-1)+1=b+1=a(p^*-1)$.

(b) The bound follows from $1+K\hat H_0^{p'}\ge K\hat H_0^{p'}$ and $ap'=\alpha$, and the limit is immediate. By (a), and by Lemma \ref{lem_flux} and Lemma \ref{lem_norm}(e) for the spheres $\partial\hat{\mathcal{W}}_R$ (note $|\hat{\mathcal{W}}_1|=|\mathcal{W}_1|$),
\[
\int_{\hat{\mathcal{W}}_R}U^{p^*-1}=-\int_{\partial\hat{\mathcal{W}}_R}A(\nabla U)\cdot\nu\,dS=\frac{R}{N}(1+KR^{p'})^{-b}N|\mathcal{W}_1|R^{N-1}\longrightarrow|\mathcal{W}_1|K^{-b}
\]
as $R\to\infty$, since $bp'=N$. Finally $a(p-1)=b-(p-1)$ and $(\alpha K)^{p-1}=(aKp')^{p-1}=1/N$, so $N(\alpha\kappa_U)^{p-1}=N(\alpha K)^{p-1}K^{-b}=K^{-b}$.

(c) With $f(t)=\kappa t^{-\alpha}$ one has $|f'|^{p-2}f'=-(\alpha\kappa)^{p-1}t^{-(\alpha+1)(p-1)}=-(\alpha\kappa)^{p-1}t^{1-N}$, so $t^{N-1}|f'|^{p-2}f'$ is constant and the second case of Lemma \ref{lem_norm}(f) applies. The flux follows from Lemma \ref{lem_norm}(e) for $\hat H_0$.
\end{proof}

\section{The transformed equation and a weak Bochner identity}\label{sec_bochner}

\emph{Convention for Sections \ref{sec_bochner}--\ref{sec_profile}.} In these three sections $H$ denotes an arbitrary norm satisfying Hypothesis (H), and $\check H(\xi)=H(-\xi)$ is its reflection, with vector field $\check A(\xi)=-A(-\xi)$. The reason is Lemma \ref{lem_transformed} below: if $V$ solves the critical equation for $\check H$, then $w=V^{-p/(N-p)}$ solves an equation driven by $H$ itself, and all the analysis of $w$ is then written for $H$. At the end of Section \ref{sec_profile} the results are applied with $H$ replaced by $\hat H$, whose reflection is the norm of Theorem \ref{thm_main}. When $H$ is even, $\check H=H$.

Throughout these sections $\Omega\subset\mathbb{R}^N$ is open and $V\in W^{1,p}_{\rm loc}(\Omega)$ is a positive weak solution of $-\Delta_p^{\check H}V=V^{p^*-1}$ in $\Omega$. By Lemma \ref{lem_locreg} applied to $\check H$, $V\in C^{1,\gamma}_{\rm loc}(\Omega)$. We set
\begin{equation}\label{eq_transform}
w=V^{-1/a}=V^{-p/(N-p)},\qquad X=A(\nabla w),\qquad h=H(\nabla w)^p,\qquad P=\frac{b\,h+c}{w},
\end{equation}
and
\[
\Omega_{\rm reg}=\{x\in\Omega:\nabla w(x)\neq 0\}=\{x\in\Omega:\nabla V(x)\neq 0\}.
\]
The functions $w$, $X$, $h$ and $P$ are continuous on $\Omega$, and $\Omega_{\rm reg}$ is open. On $\Omega_{\rm reg}$ we write $\mathcal{A}=DA(\nabla w)$, a continuous field of symmetric positive definite matrices.

\begin{lemma}\label{lem_transformed}
In the sense of distributions, $\operatorname{div}X=P$ in $\Omega$.
\end{lemma}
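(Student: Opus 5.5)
The plan is a direct computation: express $X=A(\nabla w)$ in terms of $\check A(\nabla V)$ and a power of $V$, and then apply a product rule for distributional divergences. By Lemma \ref{lem_locreg} (applied to $\check H$), $V\in C^{1,\gamma}_{\rm loc}(\Omega)$ and $V>0$. Hence $w=V^{-1/a}\in C^1(\Omega)$ and
\[
\nabla w=-\tfrac1a V^{-1/a-1}\nabla V .
\]

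\emph{Step 1: rewrite $X$.} Positive homogeneity of degree $p-1$ and the relation $A(-\xi)=-\check A(\xi)$ (Lemma \ref{lem_norm}(c), with $\check A$ in place of $\hat A$) give
\[
X=a^{1-p}V^{-(1+a)(p-1)/a}A(-\nabla V)=-c\,V^{-b/a}\,\check A(\nabla V),
\]
because $(1+a)(p-1)=b$. In the same way $h=a^{-p}V^{-p(1+a)/a}\check H(\nabla V)^p$, using $H(-\xi)=\check H(\xi)$. Both identities hold also where $\nabla V=0$, since there both sides vanish.

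\emph{Step 2: product rule.} Set $F=\check A(\nabla V)$, which is continuous on $\Omega$ and satisfies $\operatorname{div}F=-V^{p^*-1}\in C(\Omega)$ in $\mathcal D'(\Omega)$. Set $\phi=V^{-b/a}\in C^1(\Omega)$. I will check that
\[
\operatorname{div}(\phi F)=\phi\operatorname{div}F+\nabla\phi\cdot F\quad\text{in }\mathcal D'(\Omega).
\]
For $\psi\in C_c^\infty(\Omega)$, the function $\phi\psi$ is $C^1_c$. The weak formulation for $V$ extends to $C^1_c$ test functions by mollification, since $F$ and $V^{p^*-1}$ are locally bounded and the mollified functions converge in $C^1$ with supports in a fixed compact set. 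Testing with $\phi\psi$ gives
\[
\int F\cdot\nabla(\phi\psi)=\int V^{p^*-1}\phi\psi .
\]
Expanding $\nabla(\phi\psi)=\phi\nabla\psi+\psi\nabla\phi$ yields exactly the product rule. This justification is the only step needing care; the rest is algebra.

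\emph{Step 3: algebra.} From Steps 1 and 2,
\[
\operatorname{div}X=c\,V^{-b/a}V^{p^*-1}+c\,\tfrac ba V^{-b/a-1}\check A(\nabla V)\cdot\nabla V .
\]
Since $p^*-1=(b+1)/a$, the first term equals $cV^{1/a}=c/w$. For the second term, use $\check A(\nabla V)\cdot\nabla V=\check H(\nabla V)^p$ and compare with $bh/w=b\,a^{-p}V^{(1-p-pa)/a}\check H(\nabla V)^p$. The exponents agree because $1-p-pa=1-N=-(a+b)$, so $(1-p-pa)/a=-b/a-1$. The coefficients agree because $cb/a=a^{-p}b$. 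Hence
\[
\operatorname{div}X=\frac{bh+c}{w}=P
\]
in $\mathcal D'(\Omega)$.
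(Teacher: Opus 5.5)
Your proof is correct and is essentially the paper's argument: since $V^{-b/a}=w^b$, testing the equation for $V$ with $\phi\psi$ is exactly the paper's choice of test function $c\,w^b\varphi$. Your bookkeeping ($X=-c\,V^{-b/a}\check A(\nabla V)$, $p^*-1=(b+1)/a$, $1-N=-(a+b)$) is the same computation the paper writes in the $w$ variable via $X\cdot\nabla w=h$.
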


\begin{proof}
Since $V=w^{-a}$, we have $\nabla V=-a\,w^{-a-1}\nabla w$. As $\check A(\xi)=-A(-\xi)$ and $A$ is positively homogeneous of degree $p-1$,
\[
\check A(\nabla V)=-A\big(a\,w^{-a-1}\nabla w\big)=-a^{p-1}w^{-(a+1)(p-1)}X=-c^{-1}w^{-b}X.
\]
For $\varphi\in C^1_c(\Omega)$ the function $c\,w^b\varphi$ is an admissible test function: on its compact support, $V$ is bounded away from zero, so $w$ and $\nabla w$ are bounded, and the weak formulation extends from $C_c^\infty$ to this $W^{1,p}_0$ test function by density. Moreover $V^{p^*-1}=w^{-a(p^*-1)}=w^{-b-1}$. Testing the equation for $V$ with it gives
\[
-\int w^{-b}X\cdot\big(w^b\nabla\varphi+b\,w^{b-1}\varphi\nabla w\big)=c\int w^{-1}\varphi.
\]
Since $X\cdot\nabla w=h$, this is $-\int X\cdot\nabla\varphi=\int(bh+c)w^{-1}\varphi$, which is the claim.
\end{proof}

\begin{lemma}\label{lem_reg}
We have $w\in W^{2,2}_{\rm loc}(\Omega_{\rm reg})$, $X\in W^{1,2}_{\rm loc}(\Omega_{\rm reg})$ and $P\in W^{1,2}_{\rm loc}(\Omega_{\rm reg})$. Almost everywhere in $\Omega_{\rm reg}$,
\begin{equation}\label{eq_chain}
DX=\mathcal{A}D^2w,\qquad \nabla h=p\,D^2w\,X,\qquad \operatorname{tr}(DX)=P,\qquad \mathcal{A}\nabla w=(p-1)X.
\end{equation}
\end{lemma}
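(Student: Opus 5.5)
The plan is to localize to a ball $B\Subset\Omega_{\rm reg}$ and use the classical difference-quotient method. On $B$ the equation is uniformly elliptic, because $|\nabla w|$ stays in a compact subset of $(0,\infty)$. First observe that $w=V^{-1/a}\in C^{1,\gamma}(\overline B)$ and $V$ is bounded below on $\overline B$. So $w$ and $\nabla w$ are bounded, and $\nabla w$ takes values in a compact set $\mathcal K\subset\mathbb{R}^N\setminus\{0\}$. Shrinking $B$ if needed, a neighbourhood $\mathcal K_\delta$ of $\mathcal K$ avoids the origin, and on $\mathcal K_\delta$ the map $A$ is $C^1$ with $DA$ uniformly elliptic and bounded, by \eqref{eq_ellipticity} and continuity. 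By Lemma \ref{lem_transformed}, $\operatorname{div}A(\nabla w)=P$ weakly, and $P=(bh+c)/w$ is continuous, hence bounded on $B$.

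Next, for a unit vector $e$ and small $t$, set $\Delta_t w(x)=(w(x+te)-w(x))/t$. Subtracting the weak equations at $x+te$ and $x$ gives
\[
\int \mathcal B_t\,\nabla\Delta_t w\cdot\nabla\varphi=-\int \Delta_t P\,\varphi ,\qquad \mathcal B_t(x)=\int_0^1DA\big(\nabla w(x)+s(\nabla w(x+te)-\nabla w(x))\big)ds.
\]
For $|t|$ small, uniform continuity of $\nabla w$ puts the segments inside $\mathcal K_\delta$, so $\mathcal B_t$ is uniformly elliptic and bounded. Test with $\varphi=\eta^2\Delta_t w$ and move the difference quotient of $P$ onto $\varphi$ by summation by parts: $\int\Delta_tP\,\varphi=-\int P\,\Delta_{-t}\varphi$. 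Then use $\|\Delta_{-t}\varphi\|_{L^2}\le\|\nabla\varphi\|_{L^2}$ together with $P\in L^\infty$. The standard Caccioppoli argument then bounds $\|\eta\nabla\Delta_tw\|_{L^2}$ uniformly in $t$, so $w\in W^{2,2}_{\rm loc}(\Omega_{\rm reg})$.

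For the identities in \eqref{eq_chain}:
\begin{itemize}
\item \emph{$DX=\mathcal A D^2w$ and $\nabla h=pD^2w\,X$.} Since $\nabla w\in W^{1,2}\cap C^0$ with values in $\mathcal K_\delta$, and $A$ and $H^p$ are $C^1$ there with bounded derivatives, the chain rule for Sobolev functions composed with $C^1$ maps gives $X=A(\nabla w)\in W^{1,2}_{\rm loc}$ with $DX=DA(\nabla w)D^2w=\mathcal A D^2w$. It also gives $h=H(\nabla w)^p\in W^{1,2}_{\rm loc}$ with $\nabla h=D^2w\,\nabla_\xi(H^p)(\nabla w)=pD^2w\,X$, using the symmetry of $D^2w$.
\item \emph{$\operatorname{tr}(DX)=P$.} Here $\operatorname{div}X$ is now an $L^2$ function. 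It agrees with the distributional divergence, which equals $P$ by Lemma \ref{lem_transformed}.
\item \emph{$\mathcal A\nabla w=(p-1)X$.} This is Euler's identity in Lemma \ref{lem_norm}(c).
\item \emph{$P\in W^{1,2}_{\rm loc}$.} This follows from $h\in W^{1,2}_{\rm loc}$ and $w\in C^1$ bounded away from $0$, by the product and quotient rules.
\end{itemize}

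The main obstacle is the difference-quotient step. We must make sure the linearized coefficient $\mathcal B_t$ stays uniformly elliptic using only $C^1$ regularity of $A$, that is, that the segments between $\nabla w(x)$ and $\nabla w(x+te)$ avoid the origin. This is handled by the uniform continuity of $\nabla w$ on compact subsets of $\Omega_{\rm reg}$. The step needs no differentiation of $DA$, which is consistent with requiring only $H\in C^2(\mathbb{R}^N\setminus\{0\})$.
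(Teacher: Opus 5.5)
Your proposal is correct and follows essentially the paper's proof. You localize to a region where $\nabla w$ stays in a compact set away from the origin, linearize with the segment-averaged coefficient $\mathcal B_t$, test the differenced equation with $\eta^2\Delta_t w$ to obtain $W^{2,2}_{\rm loc}$, and then read off the identities from the Sobolev chain rule and Lemma \ref{lem_transformed}. The only cosmetic difference is that you bound the $P$-term with the $L^2$ difference-quotient estimate instead of the paper's $L^1$ one, which changes nothing.
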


\begin{proof}
Let $D\Subset\Omega_{\rm reg}$ be open. The set $\nabla w(\overline D)$ is a compact subset of $\mathbb{R}^N\setminus\{0\}$, so there is $\delta>0$ such that its closed $\delta$-neighborhood $\mathcal{K}$ does not contain the origin. On $\mathcal{K}$ the map $A$ is $C^1$ and $DA$ is uniformly elliptic and bounded by \eqref{eq_ellipticity}.

\emph{Step 1: $w\in W^{2,2}_{\rm loc}(D)$.} Let $e$ be a unit vector and $\Delta_\tau g(x)=(g(x+\tau e)-g(x))/\tau$. Fix $D'\Subset D$. By uniform continuity of $\nabla w$, for $|\tau|$ small the segment joining $\nabla w(x)$ and $\nabla w(x+\tau e)$ lies in $\mathcal{K}$ for $x\in D'$, so
\[
\Delta_\tau X=\mathcal{B}_\tau\nabla\Delta_\tau w,\qquad\mathcal{B}_\tau(x)=\int_0^1DA\big(\nabla w(x)+t(\nabla w(x+\tau e)-\nabla w(x))\big)dt,
\]
with $\mathcal{B}_\tau$ uniformly elliptic and bounded on $D'$, uniformly in $\tau$. Testing Lemma \ref{lem_transformed} with $\Delta_{-\tau}\varphi$, initially for $\varphi\in C_c^\infty(D')$, gives
\[
\int_{D'}\mathcal{B}_\tau\nabla\Delta_\tau w\cdot\nabla\varphi\,dx=\int_{D'}P\,\Delta_{-\tau}\varphi\,dx.
\]
For fixed sufficiently small $\tau$, both sides are continuous with respect to $\varphi$ in $W^{1,2}_0(D')$: the left-hand side because $\mathcal{B}_\tau$ is bounded and $\nabla\Delta_\tau w\in L^2(D')$, and the right-hand side because $P\in L^\infty(D')$ and the difference-quotient map is bounded from $W^{1,2}_0(D')$ to $L^2(D')$. Hence the identity extends by density to $W^{1,2}_0(D')$. We may therefore choose $\varphi=\eta^2\Delta_\tau w$, with $\eta\in C^\infty_c(D')$; choose $D''$ with $\operatorname{supp}\eta\Subset D''\Subset D'$ so that all translated supports remain in $D'$ for $|\tau|$ small. Since $P$ is continuous, $\|P\|_{L^\infty(D')}<\infty$. If $\lambda_D,\Lambda_D>0$ are ellipticity bounds for $\mathcal{B}_\tau$ on $D'$, then
\[
\begin{aligned}
\lambda_D\int \eta^2|\nabla\Delta_\tau w|^2
&\le 2\Lambda_D\int \eta|\nabla\eta|\,|\Delta_\tau w|\,|\nabla\Delta_\tau w|
   +\Big|\int P\,\Delta_{-\tau}(\eta^2\Delta_\tau w)\Big|.
\end{aligned}
\]
For compactly supported $g\in W^{1,1}$, the difference-quotient estimate $\|\Delta_{-\tau}g\|_{L^1}\le\|\partial_e g\|_{L^1}$ gives
\[
\Big|\int P\,\Delta_{-\tau}(\eta^2\Delta_\tau w)\Big|
\le \|P\|_{L^\infty(D')}
\left(2\int \eta|\nabla\eta|\,|\Delta_\tau w|+\int\eta^2|\nabla\Delta_\tau w|\right).
\]
Young's inequality therefore yields
\[
\int\eta^2|\nabla\Delta_\tau w|^2
\le C_D\left(1+\int_{D''}|\Delta_\tau w|^2\right)
\le C_D\left(1+\int_{D'}|\nabla w|^2\right),
\]
where $C_D$ is independent of $\tau$; in the last step we used the standard $L^2$ difference-quotient bound. Applying this estimate to each coordinate direction and using \cite[Lemma 7.24]{GT01} gives $\nabla w\in W^{1,2}_{\rm loc}(D)$.

\emph{Step 2.} Since $\nabla w\in W^{1,2}_{\rm loc}(D)\cap C(D)$ takes values in $\mathcal{K}$, where $A$ and $H^p$ are $C^1$, the chain rule for Sobolev functions gives $X\in W^{1,2}_{\rm loc}(D)$ with $DX=\mathcal{A}D^2w$, and $h\in W^{1,2}_{\rm loc}(D)$ with $\nabla h=pD^2w\,A(\nabla w)=pD^2w\,X$, because $\nabla_\xi(H^p)=pA$. Then $P\in W^{1,2}_{\rm loc}(D)$. By Lemma \ref{lem_transformed}, $\operatorname{tr}(DX)=\operatorname{div}X=P$ almost everywhere. The last identity in \eqref{eq_chain} is Lemma \ref{lem_norm}(c).
\end{proof}

On $\Omega_{\rm reg}$ we define, almost everywhere,
\[
E=DX-\frac PN\,\mathrm{Id},\qquad T=\mathcal{A}^{-1/2}E\,\mathcal{A}^{1/2}.
\]

\begin{lemma}\label{lem_algebra}
Almost everywhere in $\Omega_{\rm reg}$ the following hold.
\begin{enumerate}[label=\textup{(\roman*)}]
\item $T=\mathcal{A}^{1/2}D^2w\,\mathcal{A}^{1/2}-\frac PN\mathrm{Id}$. Thus $T$ is symmetric and trace-free, and $\operatorname{tr}(E^2)=|T|^2$.
\item $w\nabla P=N(p-1)D^2w\,X-P\nabla w$, equivalently $N E^{\mathsf T}\nabla w=w\nabla P$, and
\begin{equation}\label{eq_EX}
EX=\frac{w}{N(p-1)}\,\mathcal{A}\nabla P.
\end{equation}
\item \textup{(Kato inequality)} $\displaystyle\operatorname{tr}(E^2)\ge\frac{w^2\,(X\cdot\nabla P)^2}{N(N-1)\,h^2}$.
\end{enumerate}
\end{lemma}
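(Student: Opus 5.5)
The plan is to work pointwise at a.e. point of $\Omega_{\rm reg}$ where the identities \eqref{eq_chain} of Lemma \ref{lem_reg} hold, and treat everything as linear algebra with $\mathcal A$ symmetric positive definite.

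\emph{Part (i).} Since $DX=\mathcal A D^2w$, we have $E=\mathcal A D^2w-\frac PN\mathrm{Id}$. Conjugation then gives
\[
T=\mathcal A^{-1/2}\mathcal A D^2w\,\mathcal A^{1/2}-\tfrac PN\mathrm{Id}=\mathcal A^{1/2}D^2w\,\mathcal A^{1/2}-\tfrac PN\mathrm{Id}.
\]
This matrix is symmetric because $D^2w$ is symmetric a.e. (as $w\in W^{2,2}_{\rm loc}$). Its trace is $\operatorname{tr}(\mathcal AD^2w)-P=\operatorname{tr}(DX)-P=0$. Trace is similarity invariant, so $\operatorname{tr}(E^2)=\operatorname{tr}(T^2)=|T|^2$, the last step by symmetry.

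\emph{Part (ii).} Differentiate $P=(bh+c)/w$, which is legitimate since $h,w\in W^{1,2}_{\rm loc}$ and $w>0$ is continuous. Using $\nabla h=pD^2w\,X$ from \eqref{eq_chain},
\[
w\nabla P=bp\,D^2w\,X-P\nabla w.
\]
Since $bp=N(p-1)$, this is the first identity. Next, $E^{\mathsf T}=D^2w\,\mathcal A-\frac PN\mathrm{Id}$, so
\[
NE^{\mathsf T}\nabla w=N D^2w\,\mathcal A\nabla w-P\nabla w=N(p-1)D^2w\,X-P\nabla w,
\]
using $\mathcal A\nabla w=(p-1)X$. This equals $w\nabla P$. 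For \eqref{eq_EX}, compute $EX=\frac1{p-1}E\mathcal A\nabla w$. Because $D^2w$ and $\mathcal A$ are symmetric, $E\mathcal A=\mathcal AD^2w\mathcal A-\frac PN\mathcal A=\mathcal A E^{\mathsf T}$. Hence $EX=\frac1{p-1}\mathcal A E^{\mathsf T}\nabla w=\frac{w}{N(p-1)}\mathcal A\nabla P$.

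\emph{Part (iii).} This is the step needing a choice. I would apply the standard trace-free Kato estimate to $T$: for a symmetric trace-free matrix $T$ and any vector $v\ne0$,
\[
\frac{(Tv\cdot v)^2}{|v|^4}\le\frac{N-1}{N}|T|^2.
\]
To prove it, diagonalize $T$ and note that the quadratic form takes values in $[\lambda_{\min},\lambda_{\max}]$. For trace-free eigenvalues, the max-eigenvalue bound $\lambda^2\le\frac{N-1}{N}\sum\lambda_i^2$ follows from Cauchy--Schwarz applied to the other $N-1$ eigenvalues, whose sum is $-\lambda$.

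Choose $v=\mathcal A^{1/2}\nabla w$. Then
\[
Tv\cdot v=\mathcal A^{-1/2}E\mathcal A\nabla w\cdot\mathcal A^{1/2}\nabla w=E\mathcal A\nabla w\cdot\nabla w=(p-1)EX\cdot\nabla w,
\]
and $|v|^2=\mathcal A\nabla w\cdot\nabla w=(p-1)X\cdot\nabla w=(p-1)h$. By \eqref{eq_EX},
\[
EX\cdot\nabla w=\frac{w}{N(p-1)}\nabla P\cdot\mathcal A\nabla w=\frac wN X\cdot\nabla P.
\]
So $Tv\cdot v=\frac{(p-1)w}{N}X\cdot\nabla P$. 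Substituting into the Kato estimate gives
\[
|T|^2\ge\frac{N}{N-1}\cdot\frac{(p-1)^2w^2(X\cdot\nabla P)^2}{N^2(p-1)^2h^2}=\frac{w^2(X\cdot\nabla P)^2}{N(N-1)h^2},
\]
which is (iii) by (i).

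The only delicate point is this choice of $v$, which makes the constant purely dimensional, independent of the condition number of $\mathcal A$. Everything else is routine a.e. manipulation.
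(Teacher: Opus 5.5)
Your proposal is correct and follows essentially the same argument as the paper. The conjugation and similarity-invariance step in (i), the differentiation of $wP=bh+c$ together with $E\mathcal{A}=\mathcal{A}E^{\mathsf T}$ in (ii), and the eigenvalue/Cauchy--Schwarz Kato bound in (iii) all match. Your test vector $\mathcal{A}^{1/2}\nabla w$ is just $(p-1)$ times the paper's $y=\mathcal{A}^{-1/2}X$, and the Rayleigh quotient is scale-invariant, so it gives the same estimate.
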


\begin{proof}
(i) By \eqref{eq_chain}, $E=\mathcal{A}D^2w-\frac PN\mathrm{Id}$, and conjugation gives the formula for $T$. Similar matrices have the same trace, and $\operatorname{tr}E=\operatorname{tr}(DX)-P=0$. Moreover $\operatorname{tr}(E^2)=\operatorname{tr}(T^2)=|T|^2$ because $T$ is symmetric.

(ii) Differentiating $wP=bh+c$ with \eqref{eq_chain} gives $P\nabla w+w\nabla P=b\,p\,D^2w\,X$, and $bp=N(p-1)$. Next $E^{\mathsf T}=D^2w\,\mathcal{A}-\frac PN\mathrm{Id}$, so
\[
NE^{\mathsf T}\nabla w=N\,D^2w\,\mathcal{A}\nabla w-P\nabla w=N(p-1)D^2wX-P\nabla w=w\nabla P.
\]
Finally $\mathcal{A}E^{\mathsf T}=\mathcal{A}D^2w\mathcal{A}-\frac PN\mathcal{A}=E\mathcal{A}$, hence $EX=\frac{1}{p-1}E\mathcal{A}\nabla w=\frac{1}{p-1}\mathcal{A}E^{\mathsf T}\nabla w=\frac{w}{N(p-1)}\mathcal{A}\nabla P$.

(iii) Let $\tau_{\max}$ and $\tau_{\min}$ be the largest and smallest eigenvalues of $T$. The other eigenvalues sum to $-\tau_{\max}$, so by the Cauchy--Schwarz inequality their squares sum to at least $\tau_{\max}^2/(N-1)$. Hence $|T|^2\ge\frac{N}{N-1}\tau_{\max}^2$, and similarly for $\tau_{\min}$. For every $y\neq 0$ this gives
\[
|T|^2\ge\frac{N}{N-1}\Big(\frac{y\cdot Ty}{|y|^2}\Big)^2.
\]
Take $y=\mathcal{A}^{-1/2}X$. By \eqref{eq_EX}, $y\cdot Ty=X\cdot\mathcal{A}^{-1}EX=\frac{w}{N(p-1)}X\cdot\nabla P$. By \eqref{eq_chain}, $|y|^2=X\cdot\mathcal{A}^{-1}X=\frac{1}{p-1}X\cdot\nabla w=\frac{h}{p-1}$. Substituting gives (iii).
\end{proof}

\begin{remark}\label{rem_frobenius}
The Kato inequality (iii) contains no constant depending on $H$. This is the point of the conjugation. If one works instead with the Frobenius norm $|E|$ of the nonsymmetric matrix $E$, one has only $|E|^2=|\mathcal{A}^{1/2}T\mathcal{A}^{-1/2}|^2\le k(\mathcal{A})|T|^2=k(\mathcal{A})\operatorname{tr}(E^2)$, where $k(\mathcal{A})\le\Lambda_p/\lambda_p$ is the condition number. This loss cannot be avoided in general. For $N=2$, $\mathcal{A}=\mathrm{diag}(1,k)$ and $T=\big(\begin{smallmatrix}0&1\\1&0\end{smallmatrix}\big)$ one finds $\operatorname{tr}(E^2)=2$ and $|E|^2=k+k^{-1}$. Hence any argument that bounds $\operatorname{tr}(E^2)$ from below by a multiple of $|E|^2$ can only use a factor of order $1/k(\mathcal{A})$.
\end{remark}

Define the operator
\[
\mathcal{L}g=\operatorname{div}\big(w^{2-N}\mathcal{A}\nabla g\big),
\]
acting on $g\in W^{1,2}_{\rm loc}(\Omega_{\rm reg})$ in the sense of distributions. It is a linear operator in divergence form whose coefficients are continuous and uniformly elliptic on compact subsets of $\Omega_{\rm reg}$.

\begin{proposition}[Weak Bochner identity]\label{prop_bochner}
In $\mathcal{D}'(\Omega_{\rm reg})$,
\[
\mathcal{L}P=N(p-1)\,w^{1-N}|T|^2\ \ge 0.
\]
In particular $\mathcal{L}P$ is a nonnegative function in $L^1_{\rm loc}(\Omega_{\rm reg})$.
\end{proposition}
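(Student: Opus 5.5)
The plan is to avoid computing $\mathcal{L}P$ directly, since that would require differentiating $\mathcal{A}=DA(\nabla w)$. Instead I would first rewrite the flux $w^{2-N}\mathcal{A}\nabla P$ using \eqref{eq_EX} of Lemma \ref{lem_algebra}(ii). This gives, almost everywhere in $\Omega_{\rm reg}$,
\[
w^{2-N}\mathcal{A}\nabla P=N(p-1)\,w^{1-N}EX=N(p-1)\,w^{1-N}\Big(DX\,X-\frac PN X\Big).
\]
The right-hand side contains only $w$, $X$, $P$ and first derivatives of $X$. All of these are controlled by Lemma \ref{lem_reg}: $X,P\in W^{1,2}_{\rm loc}(\Omega_{\rm reg})$, and $w,X,P$ are continuous. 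So it suffices to compute the distributional divergence of the field $F=w^{1-N}(DX\,X-\frac PN X)$ on a fixed $D\Subset\Omega_{\rm reg}$.

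\emph{Step 1: divergence of $w^{1-N}DX\,X$.} For smooth $X$ one has $\partial_i(\partial_jX_i\,X_j)=X_j\partial_j(\operatorname{div}X)+\partial_jX_i\,\partial_iX_j$. Hence
\[
\operatorname{div}(DX\,X)=X\cdot\nabla P+\operatorname{tr}\big((DX)^2\big).
\]
To justify this, I would mollify $X$ on $D$, so that $X_\varepsilon\to X$ in $W^{1,2}_{\rm loc}$ and locally uniformly. The key point is that the only "third derivative" term is $\nabla\operatorname{div}X_\varepsilon=\nabla(P*\rho_\varepsilon)$. This converges in $L^2_{\rm loc}$ to $\nabla P$, because $P\in W^{1,2}_{\rm loc}$ by Lemma \ref{lem_reg}. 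The quadratic term $\operatorname{tr}((DX_\varepsilon)^2)$ converges in $L^1_{\rm loc}$. The weight $w^{1-N}$ is $C^1$ on $D$ with $\nabla(w^{1-N})=(1-N)w^{-N}\nabla w$, so passing to the limit against a test function gives, in $\mathcal{D}'(D)$,
\[
\operatorname{div}(w^{1-N}DX\,X)=w^{1-N}\big(X\cdot\nabla P+\operatorname{tr}((DX)^2)\big)+(1-N)w^{-N}\nabla w\cdot DX\,X.
\]
In the same way, using $\operatorname{div}X=P$ and $X\cdot\nabla w=h$,
\[
\operatorname{div}\Big(w^{1-N}\frac PN X\Big)=\frac{w^{1-N}}N\big(X\cdot\nabla P+P^2\big)+\frac{(1-N)}{N}w^{-N}Ph.
\]

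\emph{Step 2: algebraic cancellation.} I would substitute three identities. First, $\operatorname{tr}((DX)^2)=\operatorname{tr}(E^2)+P^2/N$, since $\operatorname{tr}E=0$. Second, by Lemma \ref{lem_algebra}(i), $\operatorname{tr}(E^2)=|T|^2$. Third,
\[
\nabla w\cdot DX\,X=\nabla w\cdot EX+\frac PN h=\frac wN X\cdot\nabla P+\frac{Ph}N,
\]
using $\nabla w\cdot EX=X\cdot E^{\mathsf T}\nabla w$ and $NE^{\mathsf T}\nabla w=w\nabla P$ from Lemma \ref{lem_algebra}(ii). Subtracting the two divergences, the $P^2/N$ terms cancel and the $Ph$ terms cancel. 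The $X\cdot\nabla P$ terms have total coefficient
\[
w^{1-N}\Big(1-\frac1N-\frac{N-1}N\Big)=0.
\]
This leaves $\operatorname{div}F=w^{1-N}|T|^2$, and multiplying by $N(p-1)$ gives the identity. Nonnegativity is immediate. Local integrability holds because $|T|^2=\operatorname{tr}(E^2)$ is bounded by a constant times $|DX|^2+P^2\in L^1_{\rm loc}$.

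The main obstacle is Step 1, namely making the identity $\operatorname{div}(DX\,X)=X\cdot\nabla P+\operatorname{tr}((DX)^2)$ rigorous with only $X\in W^{1,2}_{\rm loc}$. The point to check carefully is that, after mollification, every second derivative of $X_\varepsilon$ appears only through $\operatorname{div}X_\varepsilon$, so that $P\in W^{1,2}_{\rm loc}$ can be used in place of any second-order regularity of $X$. If mollifying $X$ (rather than $w$) creates trouble, because $\operatorname{div}X_\varepsilon=P*\rho_\varepsilon$ must be handled exactly, I would instead argue with difference quotients: test with $\varphi$, move the $\partial_j$ onto $X_j\,\partial_i\varphi\,w^{1-N}$, and use $\partial_jX_j=P$. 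This route never differentiates $\mathcal{A}$ and so stays within $H\in C^2$.
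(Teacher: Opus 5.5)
Your proposal is correct and follows essentially the same route as the paper. You rewrite the flux via \eqref{eq_EX} as $N(p-1)w^{1-N}EX$, obtain $\operatorname{div}(DX\,X)=\operatorname{tr}((DX)^2)+X\cdot\nabla P$ by mollifying $X$ (the identity $\operatorname{div}X^\varepsilon=P*\rho_\varepsilon$ causes no trouble, and the paper uses it the same way), and cancel the $X\cdot\nabla P$ terms. The only difference is bookkeeping: the paper computes $\operatorname{div}(EX)$ first and then attaches the weight $w^{1-N}$ using $\mathcal{A}\nabla w=(p-1)X$, while you carry the weight from the start and treat the cross term with the equivalent identity $NE^{\mathsf T}\nabla w=w\nabla P$.
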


\begin{proof}
Let $D\Subset\Omega_{\rm reg}$. By Lemma \ref{lem_reg}, $X\in W^{1,2}(D)\cap C(\overline D)$ and $P\in W^{1,2}(D)\cap C(\overline D)$.

We first show that, in $\mathcal{D}'(D)$,
\begin{equation}\label{eq_divEX}
\operatorname{div}(EX)=\operatorname{tr}(E^2)+\frac{N-1}{N}X\cdot\nabla P.
\end{equation}
Let $X^\varepsilon$ be a mollification of $X$ on $D'\Subset D$. For smooth fields, $\partial_i\big(X^\varepsilon_j\partial_jX^\varepsilon_i\big)=\partial_iX^\varepsilon_j\,\partial_jX^\varepsilon_i+X^\varepsilon\cdot\nabla\operatorname{div}X^\varepsilon$, with summation over repeated indices. As $\varepsilon\to 0$ we have $X^\varepsilon\to X$ uniformly, $DX^\varepsilon\to DX$ in $L^2$, and $\nabla\operatorname{div}X^\varepsilon=(\nabla P)^\varepsilon\to\nabla P$ in $L^2$. Hence every term converges in $L^1(D')$ and
\[
\partial_i\big(X_j\partial_jX_i\big)=\operatorname{tr}\big((DX)^2\big)+X\cdot\nabla P\qquad\text{in }\mathcal{D}'(D').
\]
The product rule for $PX\in W^{1,1}(D')$ gives $\operatorname{div}(PX)=\nabla P\cdot X+P^2$. Since $(EX)_i=X_j\partial_jX_i-\frac PNX_i$ and $\operatorname{tr}(E^2)=\operatorname{tr}((DX)^2)-\frac{P^2}{N}$ (because $\operatorname{tr}DX=P$), we obtain \eqref{eq_divEX}.

By \eqref{eq_EX}, the $L^2$ field $w\mathcal{A}\nabla P$ equals $N(p-1)EX$ almost everywhere, so
\[
\operatorname{div}\big(w\mathcal{A}\nabla P\big)=N(p-1)\operatorname{tr}(E^2)+(N-1)(p-1)X\cdot\nabla P\qquad\text{in }\mathcal{D}'(D).
\]
For $G\in L^1_{\rm loc}$ with $\operatorname{div}G\in L^1_{\rm loc}$ and $\sigma\in C^1$, one has $\operatorname{div}(\sigma G)=\sigma\operatorname{div}G+\nabla\sigma\cdot G$ in $\mathcal{D}'$, as one sees by testing with $\sigma\varphi$. We apply this with $\sigma=w^{1-N}$ and $G=w\mathcal{A}\nabla P$. By \eqref{eq_chain}, $\nabla(w^{1-N})\cdot w\mathcal{A}\nabla P=(1-N)w^{1-N}\mathcal{A}\nabla w\cdot\nabla P=(1-N)(p-1)w^{1-N}X\cdot\nabla P$. The two terms containing $X\cdot\nabla P$ cancel, and Lemma \ref{lem_algebra}(i) gives the claim.
\end{proof}

\begin{remark}[Why $C^2$ regularity of the anisotropy is sufficient]\label{rem_C2}
The proof above uses the anisotropy at exactly one differential level beyond $A$ itself. On the noncritical set, $H\in C^2(\mathbb{R}^N\setminus\{0\})$ makes $A=\nabla(H^p/p)$ a $C^1$ map and $DA$ a continuous positive-definite matrix field. Difference quotients then give $w\in W^{2,2}_{\rm loc}$ and $X=A(\nabla w)\in W^{1,2}_{\rm loc}$, which is sufficient for the distributional Bochner identity. Neither Proposition \ref{prop_bochner} nor the later linearizations differentiate $DA$. Thus no third derivative of $H$ is used.
\end{remark}

\section{The sharp bound for the pressure}\label{sec_Pbound}

\begin{proposition}\label{prop_Pbound}
Let $V\in W^{1,p}_{\rm loc}(\mathbb{R}^N)$ be a weak solution of $-\Delta_p^{\check H}V=V^{p^*-1}$ in $\mathbb{R}^N$ with $0<V\le 1$, and let $w$, $P$ be as in \eqref{eq_transform}. Then $P\le c$ in $\mathbb{R}^N$.
\end{proposition}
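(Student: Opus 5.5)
We prove $P\le c$ by a multiplier/integral argument based on the differential inequality of Proposition \ref{prop_bochner}, combined with the constraint $w\ge 1$ coming from $V\le 1$.

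\emph{Setup.} Since $V\le 1$ we have $w=V^{-1/a}\ge 1$. Moreover $P=(bh+c)/w\ge c/w>0$, and $P$ is continuous and locally bounded. The quantity $P-c$ vanishes identically for the bubble, so we argue by contradiction. Suppose $\sup_{\mathbb R^N}(P-c)>0$ and fix $0<\delta<\sup(P-c)$.

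\emph{Main step: the test function.} We test the Bochner identity (for the operator $\mathcal L$ with weight $w^{2-N}$) against a truncation of $P$. The multiplier is the pure-power case of Zhang's, of the form
\[
\varphi=\eta^{q}\,w^{-s}\,(P-c-\delta)_+^{m}
\]
with a cutoff $\eta$ supported in a ball $B_R$ and exponents $s,m$ to be tuned. Then:
\begin{itemize}
\item The good term is the interior term $N(p-1)w^{1-N}|T|^2\varphi$.
\item Through the Kato inequality (Lemma \ref{lem_algebra}(iii)) the good term controls $(X\cdot\nabla P)^2$.
\item We use \eqref{eq_EX} and the relations $\mathcal A\nabla w=(p-1)X$, $X\cdot\nabla w=h=(wP-c)/b$. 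On $\{P>c+\delta\}$ these give $h\ge (w(c+\delta)-c)/b\gtrsim w\delta$, so $h$ is bounded below there.
\item The cross terms $\mathcal A\nabla P\cdot\nabla w$ produced by differentiating $w^{-s}$ become multiples of $X\cdot\nabla P$. Choosing $s$ suitably, they combine with the Kato term into a nonnegative quadratic form.
\end{itemize}
This yields a Caccioppoli-type inequality
\[
\int \eta^q w^{1-N-s}(P-c-\delta)_+^{m-1}\,|\mathcal A^{1/2}\nabla P|^2\le C\int |\nabla\eta|^2\ldots,
\]
together with a zero-order positive term $\int \eta^q w^{-N-s}h\,P(P-c)\ldots$ coming from the relation $\operatorname{div}X=P$. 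The coefficient of this zero-order term has a sign exactly because $P>c$.

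\emph{The critical set.} The identity is valid only on $\Omega_{\rm reg}$. I would handle this by noting that at critical points $h=0$, so $P=c/w\le c$. Hence $\{P>c+\delta\}\Subset\Omega_{\rm reg}$ locally, and the truncated test function is supported in the regular set. This is the main reason for truncating at $c+\delta$ rather than at $c$.

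\emph{Closing, and the main obstacle.} The right-hand side involves $|\nabla\eta|^2$ times powers of $w$, $h$ and $P$. Since $w\ge1$ and $h\le wP/b$, these are bounded by powers of $P$ times negative powers of $w$. With the exponents tuned, a standard iteration ($m$ large and $R\to\infty$, comparing the growth rates $R^{N-2}$ against the zero-order term, which controls $\int_{B_R} w^{-N-s}\ldots$) should force $(P-c-\delta)_+\equiv0$, giving the contradiction.

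I expect the exponent bookkeeping to be the main obstacle. We need a choice of $s$ and $m$ that simultaneously:
\begin{itemize}
\item makes the quadratic form in $(X\cdot\nabla P,\,|\mathcal A^{1/2}\nabla P|)$ nonnegative using only the dimensional Kato constant $\frac1{N(N-1)}$, with no anisotropy constants;
\item makes the boundary terms decay without any a priori growth bound on $w$, using only $w\ge1$.
\end{itemize}
My first attempt would take $s$ so that $w^{-s}$ matches the weight making $P$ a subsolution of a weighted equation $\operatorname{div}(w^{\theta}\mathcal A\nabla P)\ge0$. I would then run a Moser-type comparison on annuli, using the local boundedness of $P$ together with the Harnack-type control of $w$ from below.
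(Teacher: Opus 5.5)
Your handling of the critical set is correct: on $\{P>c+\delta\}$ one has $h\ge\delta w/b>0$, as in the paper. The two steps that carry the proof, however, are not established, and the first one fails as designed.

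\textbf{The multiplier cannot reach the sharp constant.} With $F=w^{-s}$ and $G=(P-c-\delta)_+^m$, differentiating $w^{-s}$ produces $-(p-1)\int\eta^qF'(w)G(P)\,w^{2-N}X\cdot\nabla P$. This term is of first order in $\nabla P$ with a zero-order coefficient. So no choice of $s$ lets it join the Kato term in a nonnegative quadratic form; completing the square only leaves a \emph{negative} zero-order remainder.

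Integrating it by parts with $\operatorname{div}X=P$, as you indicate, is the remaining option. Use $X\cdot\nabla w=h=(wP-c)/b$, $\mathcal{G}'=G$ and $a+b=N-1$. This puts the zero-order density $(p-1)\eta^q\mathcal{G}(P)\,\frac sb\,w^{-N-s}\big[(a+s)wP-(N-1+s)c\big]$ on the side opposite the good terms. It is favorable on all of $\{P>c+\delta\}$ only if $-a<s<0$, and then only where $P>\frac{N-1+s}{a+s}\,\frac cw$. Since $N-1>a$, this threshold is strictly larger than $c$ near points where $w$ is close to $1$. So for no power $s$ does the zero-order term have a sign ``exactly because $P>c$''; a pure-power weight gives at best a non-sharp bound.

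This is why the paper does not test with $P$ times a power of $w$. It works with $M=P\phi(w)$, where $\phi(t)=\exp(-\int_1^t\theta(\tau)\,d\tau/\tau)$, $\theta=(N-1)/(1+Y)$, and $Y=m\,t^{-a}\big((\ell-c/t)/(\ell-c)\big)^b$ depends on the target $\ell>c$. The Kato term then enters as $(\zeta+\theta P)^2/(N-1)$, and a drift $\mathbf{d}$ cancels the terms linear in $\zeta$. The ODE for $Y$ is chosen precisely so that the zero-order part becomes $g(w)P(P-\ell)$ with $g=\theta(1-\frac{\theta}{N-1})\frac{k}{\ell-k}>0$, $k=c/w$.

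\textbf{The closing step is missing.} This step is the heart of the problem without finite energy. You assert that the cutoff terms vanish as $R\to\infty$ ``using only $w\ge1$'', but you have no bound on $P$ at infinity. The cutoff terms contain $|\nabla\eta|^2$ and $|\nabla\eta|$ times powers of $w$, $H(\nabla w)$ and $|\mathcal{A}|$, which is comparable to $H(\nabla w)^{p-2}$. These are not dominated by the zero-order term without growth control on $w$ and $\nabla w$, and $w\ge1$ gives no decay on regions where $w$ stays bounded. Local boundedness of $P$ and a lower bound on $w$ do not change this, and ``should force'' is not an argument.

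The paper avoids integral estimates altogether. From $V\le1$, the Harnack inequality and $C^{1,\gamma}$ estimates, it first proves the a priori bounds $H(\nabla w)\le C_*w$ and $P\le C_*(1+w^{p-1})$. Since $\phi(t)\lesssim t^{-(N-1)}$ and $p<N$, this gives $M\to0$ as $w\to\infty$. Hence a maximizing sequence for $M$ has bounded $w$, once $m$ is chosen large so that $M(x_0)>\ell$. Translating $V$ along that sequence and passing to a $C^1_{\rm loc}$ limit gives a solution whose $M_\infty$ attains an interior maximum $S>\ell$. The strong maximum principle, together with the strict positivity of the right-hand side of the differential inequality, rules this out. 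Your plan has no substitute for either the a priori bounds or this compactness step.
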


\begin{proof}
Note that $w\ge 1$ and $P>0$.

\emph{Step 1: a priori bounds.} We claim that there is a structural constant $C_*$ with
\begin{equation}\label{eq_gradbound}
H(\nabla w)\le C_*w,\qquad P\le C_*\big(1+w^{p-1}\big)\qquad\text{in }\mathbb{R}^N.
\end{equation}
Fix $x\in\mathbb{R}^N$ and let $S_x=\sup_{B_2(x)}V$. The function $\tilde V=V/S_x$ satisfies $-\Delta_p^{\check H}\tilde V=\mathcal{V}\tilde V^{p-1}$ with $\mathcal{V}=V^{p^*-p}\le 1$ and $0<\tilde V\le 1$ on $B_2(x)$. Lemma \ref{lem_harnack}(ii) applied on $B_8(x)$ gives $S_x\le C\inf_{B_2(x)}V\le CV(x)$, and Lemma \ref{lem_harnack}(iii) gives $|\nabla\tilde V(x)|\le C$. Hence $|\nabla V|\le CV$. Since $\nabla w=-a^{-1}(w/V)\nabla V$, Lemma \ref{lem_norm}(a) gives $H(\nabla w)\le M_Ha^{-1}w|\nabla V|/V\le Cw$. The bound for $P$ follows from \eqref{eq_transform} and $w\ge 1$.

\emph{Step 2: the multiplier.} Fix $\ell>c$ and a parameter $m>0$, to be chosen in Step 5. For $t\ge 1$ define
\[
k(t)=\frac ct,\qquad Y(t)=m\,t^{-a}\Big(\frac{\ell-k(t)}{\ell-c}\Big)^{b},\qquad \theta(t)=\frac{N-1}{1+Y(t)},\qquad s(t)=\theta(t)\Big(1-\frac{\theta(t)}{N-1}\Big),
\]
\[
\phi(t)=\exp\Big(-\int_1^t\frac{\theta(\tau)}{\tau}\,d\tau\Big),\qquad g(t)=\frac{a\,s(t)-t\theta'(t)}{b}.
\]
Since $\ell-k\ge\ell-c>0$, all these functions are smooth on $[1,\infty)$, $Y>0$, $0<\theta<N-1$, $s>0$, $0<\phi\le 1$ and $t\phi'(t)=-\theta(t)\phi(t)$. A direct differentiation shows that $Y$ solves
\[
Y'+\Big(\frac at+\frac{b\,k'}{\ell-k}\Big)Y=0.
\]
From $\theta=(N-1)/(1+Y)$ we get $\theta'=-s\,Y'/Y$, and $tk'=-k$ gives $t\theta'=s\big(a-\frac{bk}{\ell-k}\big)$. Therefore
\begin{equation}\label{eq_g}
g=\frac{s\,k}{\ell-k}>0,\qquad g\,(\ell-k)=s\,k.
\end{equation}

\emph{Step 3: a differential inequality.} Let
\[
M=P\,\phi(w),\qquad\mathcal{O}=\{x\in\mathbb{R}^N:M(x)>\ell\}.
\]
The set $\mathcal{O}$ is open because $M$ is continuous. On $\mathcal{O}$ we have $P>\ell$ because $\phi\le 1$, and therefore
\begin{equation}\label{eq_h_lower}
h=\frac{wP-c}{b}\ge\frac{P-c}{b}>\frac{\ell-c}{b}>0.
\end{equation}
Hence $\mathcal{O}\subset\Omega_{\rm reg}$, and Lemma \ref{lem_reg} and Proposition \ref{prop_bochner} apply on $\mathcal{O}$. Set
\[
\zeta=\frac{w}{\phi(w)\,h}X\cdot\nabla M,\qquad\mathbf{d}=2(p-1)\theta(w)w^{1-N}\Big(\frac{wP}{(N-1)h}-1\Big)X.
\]
The drift $\mathbf{d}$ is continuous on $\mathcal{O}$ by \eqref{eq_h_lower}. We claim that, almost everywhere in $\mathcal{O}$,
\begin{equation}\label{eq_multiplier}
\mathcal{L}M-\mathbf{d}\cdot\nabla M\ge(p-1)\phi(w)w^{1-N}\Big(\frac{\zeta^2}{N-1}+g(w)P(P-\ell)\Big),
\end{equation}
where $\mathcal{L}M$ is a locally integrable function.

We first compute $\mathcal{L}M$. We have $w^{2-N}\mathcal{A}\nabla M=\phi(w)\,w^{2-N}\mathcal{A}\nabla P+(p-1)P\,\phi'(w)\,w^{2-N}X$ by \eqref{eq_chain}. The first field has divergence $\phi\,\mathcal{L}P+(p-1)\phi'w^{2-N}X\cdot\nabla P$, by the product rule used in the proof of Proposition \ref{prop_bochner}. The second belongs to $W^{1,2}_{\rm loc}$ and its divergence is computed with \eqref{eq_chain} and $\operatorname{div}X=P$. Using $\phi'=-\theta\phi/w$ and
\[
\frac{d}{dt}\big(t^{2-N}\phi'(t)\big)=\phi(t)\,t^{-N}\big(-t\theta'+\theta^2+(N-1)\theta\big),
\]
we get
\[
\frac{\mathcal{L}M}{(p-1)\phi(w)w^{1-N}}=N|T|^2-2\theta X\cdot\nabla P+P\Big[\big(-w\theta'+\theta^2+(N-1)\theta\big)\frac hw-\theta P\Big],
\]
where $\theta,\theta'$ are evaluated at $w$. Since $\nabla M=\phi(w)\big(\nabla P-\theta P\nabla w/w\big)$ and $X\cdot\nabla w=h$, we have $X\cdot\nabla P=\frac hw(\zeta+\theta P)$. The Kato inequality of Lemma \ref{lem_algebra} becomes $N|T|^2\ge(\zeta+\theta P)^2/(N-1)$. Moreover
\[
\frac{\mathbf{d}\cdot\nabla M}{(p-1)\phi(w)w^{1-N}}=2\theta\Big(\frac{P}{N-1}-\frac hw\Big)\zeta.
\]
Writing $k=c/w$, so that $h/w=(P-k)/b$ by \eqref{eq_transform}, and using $a+b=N-1$ and $s=\theta(1-\theta/(N-1))$, a direct expansion gives the identity
\begin{multline*}
\frac{(\zeta+\theta P)^2}{N-1}-2\theta\frac hw(\zeta+\theta P)+P\Big[\big(-w\theta'+\theta^2+(N-1)\theta\big)\frac hw-\theta P\Big]-2\theta\Big(\frac P{N-1}-\frac hw\Big)\zeta\\
=\frac{\zeta^2}{N-1}+P\Big(\frac{as-w\theta'}{b}(P-k)-s\,k\Big).
\end{multline*}
(All the terms linear in $\zeta$ cancel.) By \eqref{eq_g}, the last bracket equals $g(P-k)-sk=g(P-\ell)$, and \eqref{eq_multiplier} follows. The right-hand side of \eqref{eq_multiplier} is continuous and strictly positive on $\mathcal{O}$, because $g>0$ and $P>\ell$.

\emph{Step 4: behavior as $w\to\infty$.} Since $Y(t)\le m\,t^{-a}(\ell/(\ell-c))^b$, we have $\theta(t)\ge N-1-C_{\ell,m}t^{-a}$, hence $\phi(t)\le C_{\ell,m}t^{-(N-1)}$ for $t\ge 1$. By \eqref{eq_gradbound} and $p<N$,
\begin{equation}\label{eq_Mdecay}
0<M\le C_{\ell,m}\big(1+w^{p-1}\big)w^{1-N}\longrightarrow 0\qquad\text{as }w\to\infty.
\end{equation}
In particular $M$ is bounded on $\mathbb{R}^N$.

\emph{Step 5: conclusion.} Suppose that $P(x_0)>\ell$ for some $x_0$, and put $t_0=w(x_0)$. For $1\le t\le t_0$ we have $Y(t)\ge mt_0^{-a}$, so $\theta(t)\le(N-1)/(1+mt_0^{-a})$ and $\phi(t_0)\ge t_0^{-(N-1)/(1+mt_0^{-a})}$. We choose $m$ so large that $M(x_0)>\ell$, and we keep this $m$ from now on. Let $S=\sup_{\mathbb{R}^N}M\in(\ell,\infty)$ and let $x_k$ satisfy $M(x_k)\to S$. By \eqref{eq_Mdecay}, $w(x_k)$ is bounded, that is $V(x_k)\ge\delta_0>0$.

The translates $V_k=V(\cdot+x_k)$ solve the same equation, $0<V_k\le 1$ and $V_k(0)\ge\delta_0$. By Lemma \ref{lem_harnack}(ii),(iii), a subsequence converges in $C^1_{\rm loc}(\mathbb{R}^N)$ to a weak solution $V_\infty$ with $0<V_\infty\le 1$. The associated $w_k,P_k,M_k$ are continuous functions of $(V_k,\nabla V_k)$ on $\{V_k>0\}$, so they converge locally uniformly to the corresponding $w_\infty,P_\infty,M_\infty$. Hence $M_\infty\le S$ in $\mathbb{R}^N$ and $M_\infty(0)=S>\ell$.

Apply Step 3 to $V_\infty$. There is a ball $B$ centered at $0$ with $\overline B\subset\mathcal{O}_\infty$. On $B$, $M_\infty\in W^{1,2}(B)$ is a weak subsolution of the linear operator $g\mapsto\mathcal{L}_\infty g-\mathbf{d}_\infty\cdot\nabla g$, whose principal coefficients $w_\infty^{2-N}\mathcal{A}_\infty$ are continuous and uniformly elliptic on $\overline B$ and whose drift is bounded. It attains its maximum over $B$ at the center. The strong maximum principle \cite[Theorem 8.19]{GT01} gives $M_\infty\equiv S$ on $B$. Then $\nabla M_\infty=0$ on $B$ and the left-hand side of \eqref{eq_multiplier} vanishes, which contradicts the strict positivity of its right-hand side. Therefore $P\le\ell$. Since $\ell>c$ was arbitrary, $P\le c$.
\end{proof}

\begin{remark}
The multiplier is the pure-power case of the target-dependent multiplier of Zhang \cite{Zha26}. For the pure critical power the auxiliary function $Y$ is explicit and the decay rate of $\phi$ is $N-1>p-1$, so the sharp target $c$ is reached directly. The identity leading to \eqref{eq_multiplier} involves only $w$, $P$, $h$, $\theta$ and $X\cdot\nabla P$, and no comparison between $H$ and the Euclidean norm enters.
\end{remark}

\section{Rigidity of bounded profiles}\label{sec_profile}

\begin{lemma}\label{lem_root}
Let $f\in C^1(\mathbb{R}^N)$ with $f\ge 0$, $f(x_0)=0$ and $b\,H(\nabla f)^p\le c\,f$ in $\mathbb{R}^N$. Then $f(x)\le K\,H_0(x-x_0)^{p'}$ for all $x$.
\end{lemma}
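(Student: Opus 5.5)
The idea is to take a fractional power of $f$ that turns the differential inequality into a uniform Lipschitz bound measured in the norm $H$, and then integrate along segments from $x_0$ using the duality $\xi\cdot y\le H(\xi)H_0(y)$ of Lemma \ref{lem_norm}(a).

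\emph{Step 1: Lipschitz bound for $g=f^{1/p'}$.} On the open set $\{f>0\}$ the function $g$ is $C^1$ with $\nabla g=\frac1{p'}f^{1/p'-1}\nabla f=\frac1{p'}f^{-1/p}\nabla f$. By positive homogeneity of $H$ and the hypothesis $H(\nabla f)\le (c f/b)^{1/p}$, we get
\[
H(\nabla g)\le \frac1{p'}f^{-1/p}\Big(\frac{c}{b}\Big)^{1/p}f^{1/p}=\frac1{p'}\Big(\frac cb\Big)^{1/p}=:L\qquad\text{on }\{f>0\}.
\]

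\emph{Step 2: integration along segments.} Fix $x$ with $f(x)>0$; the case $f(x)=0$ is trivial. Consider $\gamma(t)=x_0+t(x-x_0)$ for $t\in[0,1]$. Let $t_0=\max\{t\in[0,1]:f(\gamma(t))=0\}$; this is well defined because $f(\gamma(0))=0$, and $t_0<1$. On $(t_0,1]$ we have $f(\gamma(t))>0$, so $g\circ\gamma$ is $C^1$ there and continuous on $[t_0,1]$ with $g(\gamma(t_0))=0$. For $s\in(t_0,1)$, Lemma \ref{lem_norm}(a) gives
\[
g(x)-g(\gamma(s))=\int_s^1\nabla g(\gamma(t))\cdot(x-x_0)\,dt\le\int_s^1 H(\nabla g(\gamma(t)))\,H_0(x-x_0)\,dt\le L\,H_0(x-x_0).
\]
Letting $s\downarrow t_0$ yields $g(x)\le L\,H_0(x-x_0)$. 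The only delicate point is the possible failure of differentiability of $g$ where $f$ vanishes, and choosing the last zero $t_0$ on the segment avoids it.

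\emph{Step 3: identification of the constant.} Raising to the power $p'$ gives $f(x)\le L^{p'}H_0(x-x_0)^{p'}$. It remains to check that $L^{p'}=K$. Since $b=N/p'$,
\[
L^{p'}=p'^{-p'}\Big(\frac{c\,p'}{N}\Big)^{1/(p-1)}=p'^{-p'+1/(p-1)}\Big(\frac cN\Big)^{1/(p-1)}.
\]
Because $1/(p-1)=p'-1$, this equals $\frac1{p'}(c/N)^{1/(p-1)}=K$. Note that the direction of the duality inequality, $\nabla g\cdot(x-x_0)\le H(\nabla g)H_0(x-x_0)$, is what makes the bound come out in terms of $H_0(x-x_0)$ rather than $\hat H_0(x-x_0)$.
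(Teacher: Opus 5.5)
Your proof is correct and follows essentially the same route as the paper: take the power $f^{1/p'}$ to get a uniform $H$-Lipschitz bound, integrate along the segment from $x_0$ using $\xi\cdot y\le H(\xi)H_0(y)$, raise to the power $p'$, and check that $L^{p'}=K$. The only difference is how the zeros of $f$ are handled. You stop at the last zero $t_0$ on the segment, whereas the paper regularizes with $(f+\varepsilon)^{1/p'}$, which is $C^1$ everywhere, and lets $\varepsilon\to 0$; both devices work equally well.
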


\begin{proof}
For $\varepsilon>0$ the function $\Psi_\varepsilon=(f+\varepsilon)^{1/p'}$ is $C^1$, and $\nabla\Psi_\varepsilon=\frac{1}{p'}(f+\varepsilon)^{-1/p}\nabla f$. Hence
\[
H(\nabla\Psi_\varepsilon)^p=(p')^{-p}\frac{H(\nabla f)^p}{f+\varepsilon}\le\frac{c}{b\,(p')^p}=\frac{c}{N(p')^{p-1}},
\]
since $bp'=N$. By Lemma \ref{lem_norm}(a),
\[
\Psi_\varepsilon(x)-\Psi_\varepsilon(x_0)=\int_0^1\nabla\Psi_\varepsilon\big(x_0+t(x-x_0)\big)\cdot(x-x_0)\,dt\le\Big(\frac{c}{N(p')^{p-1}}\Big)^{1/p}H_0(x-x_0).
\]
Letting $\varepsilon\to 0$ and raising to the power $p'$ gives $f(x)\le\big(c/(N(p')^{p-1})\big)^{1/(p-1)}H_0(x-x_0)^{p'}=K\,H_0(x-x_0)^{p'}$.
\end{proof}

\begin{lemma}[Wulff tangent rigidity]\label{lem_tangent}
Let $\mathcal{T}\in C^1(\mathbb{R}^N)$ satisfy $\mathcal{T}\ge 0$, $\mathcal{T}(0)=0$, $b\,H(\nabla\mathcal{T})^p\le c\,\mathcal{T}$, and $\operatorname{div}A(\nabla\mathcal{T})=c$ weakly in $\mathbb{R}^N$. Then $\mathcal{T}=K H_0^{p'}$.
\end{lemma}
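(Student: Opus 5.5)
The plan is to compare $\mathcal{T}$ with the model function $\Phi=K H_0^{p'}$ through the flux of $A(\nabla\mathcal{T})$ across Wulff spheres $\partial\mathcal{W}_r$. We show that the divergence equation forces every inequality in the chain below to be an equality. No comparison principle is needed, which is fortunate: $\nabla\Phi(0)=0$, so Lemma \ref{lem_strongcomp} is unavailable at the contact point.

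First, Lemma \ref{lem_root} with $f=\mathcal{T}$ and $x_0=0$ gives $\mathcal{T}\le K H_0^{p'}=\Phi$ in $\mathbb{R}^N$. Next we record the model computation via Lemma \ref{lem_norm}(f), first case, with $f(t)=Kt^{p'}$. Since $(Kp')^{p-1}=c/N$, we get
\[
A(\nabla\Phi)=\tfrac{c}{N}\,x,\qquad \operatorname{div}A(\nabla\Phi)=c,\qquad bH(\nabla\Phi)^p=c\,\Phi .
\]
In particular $(cK/b)^{1/p'}=c/N$, because $H(\nabla\Phi)^{p-1}=(Kp')^{p-1}H_0=(c/N)H_0$ and also $H(\nabla\Phi)^{p-1}=(c\Phi/b)^{1/p'}=(cK/b)^{1/p'}H_0$. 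So $\Phi$ saturates both hypotheses on $\mathcal{T}$.

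Now fix $r>0$. Since $A(\nabla\mathcal{T})$ is continuous, Lemma \ref{lem_flux} on the ball gives $\int_{\partial\mathcal{W}_r}A(\nabla\mathcal{T})\cdot\nu\,dS=c|\mathcal{W}_r|$. On the other hand, Lemma \ref{lem_norm}(a),(c) gives
\[
A(\nabla\mathcal{T})\cdot\nu\le H(\nu)\,H_0\big(A(\nabla\mathcal{T})\big)=H(\nu)\,H(\nabla\mathcal{T})^{p-1}.
\]
We then apply the hypothesis $bH(\nabla\mathcal{T})^p\le c\mathcal{T}$ and the bound $\mathcal{T}\le\Phi=Kr^{p'}$ on $\partial\mathcal{W}_r$:
\[
A(\nabla\mathcal{T})\cdot\nu\le H(\nu)\Big(\frac{c\mathcal{T}}{b}\Big)^{1/p'}\le H(\nu)\Big(\frac{cK}{b}\Big)^{1/p'}r=\frac{c}{N}\,\frac{r}{|\nabla H_0|}.
\]
Here we used $H(\nu)=1/|\nabla H_0|$ from Lemma \ref{lem_norm}(e). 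Integrating over $\partial\mathcal{W}_r$ with $\int_{\partial\mathcal{W}_r}|\nabla H_0|^{-1}dS=N|\mathcal{W}_1|r^{N-1}$ bounds the flux by $c|\mathcal{W}_1|r^N=c|\mathcal{W}_r|$. This is exactly its value.

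Hence equality holds $dS$-a.e. on $\partial\mathcal{W}_r$ in the continuous chain above. Since $H(\nu)>0$ and $r>0$, this forces $\mathcal{T}=Kr^{p'}$ a.e. on $\partial\mathcal{W}_r$, hence everywhere on it by continuity. As $r>0$ is arbitrary and $\mathcal{T}(0)=0$, we conclude $\mathcal{T}=KH_0^{p'}$.

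The only delicate point is confirming the constant identity $(cK/b)^{1/p'}=c/N$, so that the flux inequality is sharp. The model computation above provides it, using $bp'=N$ and $(Kp')^{p-1}=c/N$. The equality case in the first inequality (the direction of $A(\nabla\mathcal{T})$) is not even needed.
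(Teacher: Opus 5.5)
Your proof is correct and follows the paper's argument: the bound $\mathcal T\le KH_0^{p'}$ from Lemma \ref{lem_root}, the flux identity from Lemma \ref{lem_flux}, and saturation of the pointwise chain $A(\nabla\mathcal T)\cdot\nu\le H(\nu)H_0(A(\nabla\mathcal T))\le\frac cN\frac{r}{|\nabla H_0|}$ on each $\partial\mathcal W_r$. The only difference is the final step: the paper uses the equality case of $\xi\cdot x\le H(\xi)H_0(x)$ to get $A(\nabla\mathcal T)=\frac cNx$ and integrates $\nabla\mathcal T=A^{-1}(\frac cNx)$ (a computation it reuses in Step 5 of Proposition \ref{prop_profile}), whereas you read off $\mathcal T=Kr^{p'}$ directly from equality in the bound $\mathcal T\le Kr^{p'}$, which is slightly shorter and equally valid.
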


\begin{proof}
By Lemma \ref{lem_root}, $\mathcal{T}\le KH_0^{p'}$. The constants satisfy
\begin{equation}\label{eq_Kidentity}
(Kp')^{p-1}=\frac cN,\qquad b\,(Kp')^p=c\,K.
\end{equation}
By Lemma \ref{lem_norm}(c),
\[
H_0\big(A(\nabla\mathcal{T})\big)=H(\nabla\mathcal{T})^{p-1}\le\Big(\frac cb\,\mathcal{T}\Big)^{1/p'}\le\Big(\frac{cK}{b}\Big)^{1/p'}H_0=(Kp')^{p-1}H_0=\frac cN\,H_0.
\]
Fix $R>0$. By Lemma \ref{lem_flux}, $\int_{\partial\mathcal{W}_R}A(\nabla\mathcal{T})\cdot\nu\,dS=c|\mathcal{W}_R|=c|\mathcal{W}_1|R^N$. On the other hand, by Lemma \ref{lem_norm}(a),(e) and the last display, on $\partial\mathcal{W}_R$
\[
A(\nabla\mathcal{T})\cdot\nu\le H_0\big(A(\nabla\mathcal{T})\big)H(\nu)\le\frac cN\,\frac{R}{|\nabla H_0|},
\]
and the integral of the right-hand side over $\partial\mathcal{W}_R$ equals $c|\mathcal{W}_1|R^N$. Both inequalities are between continuous functions, so both are equalities at every point of $\partial\mathcal{W}_R$. Since $R$ is arbitrary, for every $x\neq 0$ we have $H_0(A(\nabla\mathcal{T}(x)))=\frac cNH_0(x)>0$, and equality holds in $A(\nabla\mathcal{T})\cdot\nu\le H_0(A(\nabla\mathcal{T}))H(\nu)$. The equality case of Lemma \ref{lem_norm}(a) and Lemma \ref{lem_norm}(b) give
\[
A(\nabla\mathcal{T}(x))=H_0\big(A(\nabla\mathcal{T}(x))\big)\nabla H(\nu)=\frac cNH_0(x)\,\nabla H(\nabla H_0(x))=\frac cN\,x.
\]
By Lemma \ref{lem_norm}(c) and \eqref{eq_Kidentity}, $\nabla\mathcal{T}=A^{-1}(\frac cNx)=(Kp')H_0^{p'-1}\nabla H_0=\nabla(KH_0^{p'})$. Since $\mathcal{T}(0)=0$, the claim follows.
\end{proof}

\begin{proposition}\label{prop_profile}
Let $V\in W^{1,p}_{\rm loc}(\mathbb{R}^N)$ be a weak solution of $-\Delta_p^{\check H}V=V^{p^*-1}$ in $\mathbb{R}^N$ with $0<V\le 1$ and $V(0)=1$. Then $V=(1+KH_0^{p'})^{-a}$.
\end{proposition}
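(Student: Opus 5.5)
The plan is to show that the pressure $P$ attains its maximal value $c$ and is therefore constant, and then to integrate the resulting first-order equation.

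\emph{Step 1: one-sided bound.} Since $0<V\le 1$ and $V(0)=1$, the function $w=V^{-1/a}$ satisfies $w\ge 1=w(0)$, so $\nabla w(0)=0$ and $P(0)=c$. Proposition \ref{prop_Pbound} gives $P\le c$ everywhere. By \eqref{eq_transform} this is equivalent to $b\,H(\nabla w)^p\le c\,(w-1)$. Lemma \ref{lem_root}, applied to $f=w-1$ with $x_0=0$, then gives
\[
w\le 1+KH_0^{p'},\qquad\text{that is,}\qquad V\ge(1+KH_0^{p'})^{-a}.
\]

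\emph{Step 2: tangent at the origin.} Set $w_r(y)=(w(ry)-1)/r^{p'}$. Then $A(\nabla w_r)(y)=r^{-1}X(ry)$, so $\operatorname{div}A(\nabla w_r)=P(ry)\to c$ locally uniformly. The inequality $b\,H(\nabla w_r)^p\le c\,w_r$ is scale invariant, and Step 1 gives $0\le w_r\le KH_0^{p'}$. Lemma \ref{lem_harnack}(iii) yields $C^{1,\gamma}$ bounds for $w_r$, so a subsequence converges in $C^1_{\rm loc}$ to some $\mathcal{T}$. By Lemma \ref{lem_tangent}, $\mathcal{T}=KH_0^{p'}$, and the limit is unique, so the full family converges. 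Consequently $\nabla w\neq 0$ on a punctured neighborhood $\mathcal{W}_\rho\setminus\{0\}$, which is contained in $\Omega_{\rm reg}$.

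\emph{Step 3: $P\equiv c$ near $0$ (main obstacle).} On $\Omega_{\rm reg}$, Proposition \ref{prop_bochner} says $P$ is an $\mathcal{L}$-subsolution with $P\le c$ and $P(x)\to c$ as $x\to 0$. The difficulty is that $0$ is a critical point, and there the coefficients $w^{2-N}\mathcal{A}$ degenerate like $H(\nabla w)^{p-2}\sim H_0^{(p-2)/(p-1)}$, so the linear strong maximum principle does not apply directly.

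I would first try a removable-singularity argument. Step 2 shows that this weight is a power of the distance to $0$, of $A_2$ type, so a point has zero weighted capacity. The bounded subsolution $P$ should then extend across $0$ as a subsolution of a degenerate equation with power weight, and the weighted strong maximum principle (Fabes--Kenig--Serapioni) at the interior maximum $0$ would give $P\equiv c$ near $0$.

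As a fallback, I would apply Step 2 to $P$ itself. Rescaled around $0$, $P$ becomes the pressure of the rescaled profiles, and one could try a Hopf-type comparison on Wulff annuli against explicit barriers built from $H_0$.

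\emph{Step 4: integration and topological conclusion.} Let $\mathcal{C}$ be the connected component of $\Omega_{\rm reg}$ containing $\mathcal{W}_\rho\setminus\{0\}$. The strong maximum principle applied to the subsolution $P$ in $\mathcal{C}$ gives $P\equiv c$ on $\mathcal{C}$. There $b\,H(\nabla w)^p=c\,(w-1)$, which is equality in Lemma \ref{lem_root}.

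Tracing the equality cases of Lemma \ref{lem_norm}(a) along segments from the origin, as in the proof of Lemma \ref{lem_tangent}, forces
\[
\nabla\big((w-1)^{1/p'}\big)\equiv(\text{const})\,\nabla H_0,\qquad\text{hence}\qquad w=1+KH_0^{p'}\ \text{on }\mathcal{C}.
\]
Now take a boundary point $x_1\neq 0$ of $\mathcal{C}$. By continuity, $P(x_1)=c$ and $\nabla w(x_1)=0$, which forces $w(x_1)=1$. But continuity also gives $w(x_1)=1+KH_0(x_1)^{p'}>1$, a contradiction. Hence $\mathcal{C}=\mathbb{R}^N\setminus\{0\}$, so $w=1+KH_0^{p'}$ everywhere, and $V=(1+KH_0^{p'})^{-a}$.
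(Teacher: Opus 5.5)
Your Steps 1 and 2 are sound, but Step 3, which you correctly call the main obstacle, is not proved, and your primary route fails in part of the range. By Step 2 the coefficients $w^{2-N}\mathcal{A}$ are comparable near $0$ to $|x|^{\gamma}$ with $\gamma=(p-2)/(p-1)=2-p'$. A point has zero capacity for this weight only if $N-2+\gamma\ge 0$, i.e.\ $p'\le N$. The weight is not even $A_2$ once $p'\ge N+2$. For $p'>N$, which for $N=2$ is the whole range $1<p<2$, the model operator $\operatorname{div}(|x|^{2-p'}\nabla g)$ has the bounded, nonconstant solution $g=-|x|^{p'-N}$ in the punctured ball, and it attains its maximum at the origin. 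So no argument that uses only boundedness of $P$, $P\le c=P(0)$ and the subsolution property off the origin can give $P\equiv c$ near $0$.

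What is missing is quantitative. The paper extracts from the $C^1$ tangent convergence both $w-1=r^{p'}(K+o(1))$ and $h=r^{p'}(cK/b+o(1))$, hence $\Pi=c-P=o(r^{p'})$ on $\partial\mathcal{W}_r$. It then uses the barrier $\psi=(w-1)^\sigma$ with $\max\{0,1-b\}<\sigma<1$. Since $\mathcal{A}\nabla w=(p-1)X$ and $\operatorname{div}X=P$, $\mathcal{L}\psi$ can be computed without differentiating the merely continuous coefficients $DA(\nabla w)$. It is positive near $0$ precisely because $\sigma>1-b$, equivalently $\sigma p'>p'-N$, the exponent in the example above. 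A barrier built from $H_0$, as in your fallback, has no computable $\mathcal{L}$-image. If $\Pi\not\equiv 0$ in the punctured ball, the strong maximum principle and comparison on annuli give $\Pi\ge\lambda_0(w-1)^\sigma\gtrsim r^{\sigma p'}$. Since $\sigma<1$, this contradicts $\Pi=o(r^{p'})$.

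A smaller point concerns Step 4. The equality $bH(\nabla w)^p=c(w-1)$ is an eikonal equation and does not by itself force $w=1+KH_0^{p'}$ by tracing segments from the origin, which need not stay in $\mathcal{C}$. Use Proposition \ref{prop_bochner} instead: $P\equiv c$ on $\mathcal{C}$ gives $T=0$, hence $X=\frac cN(x-x_{\mathcal{C}})$ on $\mathcal{C}$, and continuity of $X$ at the origin gives $x_{\mathcal{C}}=0$. With that repair your component argument works and needs Step 3 only at the origin, whereas the paper's open--closed argument for $\{\Pi=0\}$ invokes it at every point where $w=1$.
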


\begin{proof}
Let $w$, $X$, $h$, $P$, $\Omega_{\rm reg}$ be defined by \eqref{eq_transform} with $\Omega=\mathbb{R}^N$. Then $w\ge 1$, $w(0)=1$, $\nabla w(0)=0$. Set $\Pi=c-P$, a continuous function.

\emph{Step 1.} By Proposition \ref{prop_Pbound}, $\Pi\ge 0$, which by \eqref{eq_transform} reads
\begin{equation}\label{eq_Pi}
\Pi=\frac{c(w-1)-b\,h}{w}\ge 0,\qquad b\,H(\nabla w)^p\le c\,(w-1)\quad\text{in }\mathbb{R}^N.
\end{equation}
By Proposition \ref{prop_bochner}, $\mathcal{L}\Pi=-\mathcal{L}P\le 0$ in $\mathcal{D}'(\Omega_{\rm reg})$. If $x_1$ is a critical point of $w$, then $P(x_1)=c/w(x_1)$.

\emph{Step 2: tangent at a point where $w=1$.} Let $x_0$ satisfy $w(x_0)=1$. By \eqref{eq_Pi} and Lemma \ref{lem_root}, $w(x)-1\le KH_0(x-x_0)^{p'}$. For $r>0$ let
\[
\mathcal{T}_r(y)=\frac{w(x_0+ry)-1}{r^{p'}}.
\]
Then $0\le\mathcal{T}_r\le KH_0^{p'}$, $\mathcal{T}_r(0)=0$, and $bH(\nabla\mathcal{T}_r)^p\le c\mathcal{T}_r$ because $(1-p')p=-p'$. Since $(p'-1)(p-1)=1$, we have $A(\nabla\mathcal{T}_r)(y)=r^{-1}X(x_0+ry)$, so $\operatorname{div}A(\nabla\mathcal{T}_r)=P(x_0+r\,\cdot)$ weakly, with $0<P\le c$. By Lemma \ref{lem_harnack}(iii), the family $\{\mathcal{T}_r\}$ is precompact in $C^1_{\rm loc}(\mathbb{R}^N)$. Since $P$ is continuous and $P(x_0)=c$, every limit as $r\to 0$ satisfies the hypotheses of Lemma \ref{lem_tangent}, so $\mathcal{T}_r\to KH_0^{p'}$ in $C^1_{\rm loc}(\mathbb{R}^N)$. Using $H(\nabla H_0)=1$ and \eqref{eq_Kidentity}, we obtain, uniformly for $H_0(y)=1$,
\begin{equation}\label{eq_tangent_expansion}
w(x_0+ry)-1=r^{p'}(K+o(1)),\qquad h(x_0+ry)=r^{p'}\Big(\frac{cK}{b}+o(1)\Big),\qquad \Pi(x_0+ry)=o(r^{p'})
\end{equation}
as $r\to 0$. Moreover $\nabla(KH_0^{p'})$ does not vanish on $\partial\mathcal{W}_1$, so there is $r_0>0$ with $\mathcal{W}_{r_0}(x_0)\setminus\{x_0\}\subset\Omega_{\rm reg}$.

\emph{Step 3: $\Pi\equiv 0$ near a point where $w=1$.} Let $x_0$ be as in Step 2, fix $\sigma$ with $\max\{0,1-b\}<\sigma<1$, and let $\psi=(w-1)^\sigma$ on $\mathcal{W}_{r_0}(x_0)\setminus\{x_0\}$, where $w>1$. Since $w^{2-N}\mathcal{A}\nabla\psi=(p-1)\sigma w^{2-N}(w-1)^{\sigma-1}X\in W^{1,2}_{\rm loc}$, the product rule and $\operatorname{div}X=P$ give
\[
\mathcal{L}\psi=(p-1)\sigma\,w^{2-N}(w-1)^{\sigma-2}\Big[(w-1)P+\Big(\sigma-1+(2-N)\frac{w-1}{w}\Big)h\Big].
\]
By \eqref{eq_tangent_expansion}, the bracket equals $r^{p'}\big(cK(1+\frac{\sigma-1}{b})+o(1)\big)$ on $\partial\mathcal{W}_r(x_0)$, and $1+(\sigma-1)/b>0$ by the choice of $\sigma$. Hence there is $r_1\in(0,r_0)$ such that $\mathcal{L}\psi$ is a positive continuous function on $\mathcal{W}_{r_1}(x_0)\setminus\{x_0\}$.

Suppose that $\Pi$ does not vanish identically on $\mathcal{W}_{r_1}(x_0)\setminus\{x_0\}$. This set is connected, $\mathcal{L}$ has continuous coefficients which are uniformly elliptic on its compact subsets, $\Pi\ge 0$ and $\mathcal{L}\Pi\le 0$. The strong maximum principle \cite[Theorem 8.19]{GT01} gives $\Pi>0$ there. Fix $r_2\in(0,r_1)$, let $m_2=\min_{\partial\mathcal{W}_{r_2}(x_0)}\Pi>0$, and choose $\lambda_0>0$ with $\lambda_0\max_{\partial\mathcal{W}_{r_2}(x_0)}\psi\le m_2/2$. For $0<\varepsilon<r_2/2$ put $M_\varepsilon=\max_{\partial\mathcal{W}_\varepsilon(x_0)}\psi$ and $\mathcal{H}_\varepsilon=\Pi-\lambda_0(\psi-M_\varepsilon)$ on the annulus $\mathcal{D}_\varepsilon=\mathcal{W}_{r_2}(x_0)\setminus\overline{\mathcal{W}_\varepsilon(x_0)}$. Then $\mathcal{H}_\varepsilon\ge 0$ on $\partial\mathcal{D}_\varepsilon$ and $\mathcal{L}\mathcal{H}_\varepsilon\le 0$ in $\mathcal{D}_\varepsilon$, so the weak maximum principle \cite[Theorem 8.1]{GT01} gives $\mathcal{H}_\varepsilon\ge 0$ in $\mathcal{D}_\varepsilon$. As $\varepsilon\to 0$, $M_\varepsilon\to 0$, hence
\[
\Pi\ge\lambda_0(w-1)^\sigma\qquad\text{in }\mathcal{W}_{r_2}(x_0)\setminus\{x_0\}.
\]
By \eqref{eq_tangent_expansion} the right-hand side is at least $\lambda_0(K/2)^\sigma r^{\sigma p'}$ on $\partial\mathcal{W}_r(x_0)$ for small $r$, while $\Pi=o(r^{p'})$ there. Since $\sigma<1$ this is impossible. Therefore $\Pi\equiv 0$ in $\mathcal{W}_{r_1}(x_0)$, using also $\Pi(x_0)=0$.

\emph{Step 4: $P\equiv c$.} The set $\mathcal{N}=\{\Pi=0\}$ is closed and contains $0$. Let $x_1\in\mathcal{N}$. If $\nabla w(x_1)\neq 0$, then $\Pi\ge 0$, $\mathcal{L}\Pi\le 0$ on a ball $B\subset\Omega_{\rm reg}$ around $x_1$, and $\Pi(x_1)=0$, so \cite[Theorem 8.19]{GT01} gives $\Pi\equiv 0$ on $B$. If $\nabla w(x_1)=0$, then $c=P(x_1)=c/w(x_1)$, so $w(x_1)=1$ and Step 3 applies at $x_1$. Hence $\mathcal{N}$ is open, and $\mathcal{N}=\mathbb{R}^N$.

\emph{Step 5: identification.} By Proposition \ref{prop_bochner} and Step 4, $N(p-1)w^{1-N}|T|^2=\mathcal{L}c=0$, so $T=0$ and $E=0$ almost everywhere in $\Omega_{\rm reg}$. Thus $DX=\frac cN\mathrm{Id}$ almost everywhere in $\Omega_{\rm reg}$.

The complement $\mathcal{C}=\mathbb{R}^N\setminus\Omega_{\rm reg}=\{X=0\}$ has empty interior: on an open set contained in $\mathcal{C}$ we would have $\operatorname{div}X=0$, while $\operatorname{div}X=P=c>0$ by Lemma \ref{lem_transformed}. In particular $\Omega_{\rm reg}\neq\emptyset$. Let $\mathcal{U}$ be a connected component of $\Omega_{\rm reg}$. Since $X\in W^{1,1}_{\rm loc}(\mathcal{U})$ and $D\big(X-\frac cNx\big)=0$ almost everywhere on $\mathcal{U}$, there is $x_{\mathcal{U}}\in\mathbb{R}^N$ with
\[
X(x)=\frac cN(x-x_{\mathcal{U}})\qquad\text{on }\mathcal{U}.
\]
Let $y\in\partial\mathcal{U}$. Then $y\notin\Omega_{\rm reg}$, because the component of $\Omega_{\rm reg}$ containing $y$ would be an open neighborhood of $y$ disjoint from $\mathcal{U}$. Hence $X(y)=0$, and by continuity $X(y)=\frac cN(y-x_{\mathcal{U}})$, so $y=x_{\mathcal{U}}$. Thus $\partial\mathcal{U}\subset\{x_{\mathcal{U}}\}$, and $x_{\mathcal{U}}\notin\mathcal{U}$ because $X(x_{\mathcal{U}})=0$. Therefore $\mathcal{U}$ is a nonempty subset of $\mathbb{R}^N\setminus\{x_{\mathcal{U}}\}$ which is open and relatively closed in it. Since $N\ge 2$ this set is connected, so $\mathcal{U}=\mathbb{R}^N\setminus\{x_{\mathcal{U}}\}$. By continuity $X(x)=\frac cN(x-x_{\mathcal{U}})$ on $\mathbb{R}^N$, and $X(0)=A(\nabla w(0))=0$ gives $x_{\mathcal{U}}=0$.

As in the proof of Lemma \ref{lem_tangent}, $\nabla w=A^{-1}(\frac cNx)=\nabla(KH_0^{p'})$, and $w(0)=1$ gives $w=1+KH_0^{p'}$. Hence $V=w^{-a}=(1+KH_0^{p'})^{-a}$.
\end{proof}

\begin{proof}[Proof of Theorem \ref{thm_profile}]
Let $H$ be the norm of Theorem \ref{thm_profile}. The reflected norm $\hat H$ satisfies (H), and its own reflection is $H$. Applying Proposition \ref{prop_profile} to the norm $\hat H$, whose dual norm is $\hat H_0$, gives $V=(1+K\hat H_0^{p'})^{-a}=U$.
\end{proof}

\section{Isolated singularities}\label{sec_pole}

From now on $H$ is again the norm of Theorem \ref{thm_main}. In this section $W$ is as in Theorem \ref{thm_pole}, and all Wulff balls are those of $\hat H_0$, with respect to which the fundamental solution $\mu=\hat H_0^{-\alpha}$ is radial. For $0<r<R$ let
\[
W_r(y)=r^\alpha W(ry),\qquad 0<\hat H_0(y)<R/r.
\]
Since $(\alpha+1)(p-1)=N-1$, one has $A(\nabla W_r)(y)=r^{N-1}A(\nabla W)(ry)$. Hence $W_r$ is a weak solution of $\Delta_p^HW_r=0$, $W_r\le C_0\mu$, and, by Lemma \ref{lem_norm}(e) for $\hat H_0$ and the change of variables $x=ry$ on Wulff spheres,
\[
\int_{\partial\hat{\mathcal{W}}_t}A(\nabla W_r)\cdot\nu\,dS=\int_{\partial\hat{\mathcal{W}}_{rt}}A(\nabla W)\cdot\nu\,dS=\mathcal{F}\qquad(0<t<R/r).
\]
The flux $\mathcal{F}$ is independent of the radius by Lemma \ref{lem_flux} applied to $G=A(\nabla W)$, $g=0$.

\begin{lemma}\label{lem_blowup_pole}
Every sequence $r_j\to 0$ has a subsequence along which $W_{r_j}\to Z$ in $C^1_{\rm loc}(\mathbb{R}^N\setminus\{0\})$, where $Z\in C^1(\mathbb{R}^N\setminus\{0\})$ is a weak solution of $\Delta_p^HZ=0$ with $0\le Z\le C_0\mu$ and $\int_{\partial\hat{\mathcal{W}}_t}A(\nabla Z)\cdot\nu\,dS=\mathcal{F}$ for all $t>0$.
\end{lemma}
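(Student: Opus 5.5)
The argument is a standard compactness argument on annuli, followed by passage to the limit in the flux. Fix a compact set $\mathcal{K}\subset\mathbb{R}^N\setminus\{0\}$, say $\mathcal{K}\subset\{\rho\le\hat H_0\le 1/\rho\}$. For $j$ large, $R/r_j>2/\rho$, so $W_{r_j}$ is defined on the annulus $\{\rho/2<\hat H_0<2/\rho\}$. There it is a nonnegative weak solution of $\Delta_p^HW_{r_j}=0$. It is uniformly bounded, since $0<W_{r_j}\le C_0\mu\le C_0(\rho/2)^{-\alpha}$. Cover $\mathcal{K}$ by finitely many Euclidean balls $B_r(x_i)$ with $B_{4r}(x_i)$ contained in this annulus, with $r$ depending only on $\rho$ and $H$ (Lemma \ref{lem_norm}(a)). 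Lemma \ref{lem_harnack}(iii), with $f=0$, then gives a uniform $C^{1,\gamma}$ bound for $W_{r_j}$ on $\mathcal{K}$. One point needs care: Lemma \ref{lem_harnack}(iii) is stated for $u\in W^{1,p}(B_{2r})\cap L^\infty$. Our $W_{r_j}$ is $C^1$ on the closed ball, so it lies in that class, and the estimate depends only on the $L^\infty$ bound. By Arzel\`a--Ascoli and a diagonal argument over an exhaustion $\rho=1/k$, we extract a subsequence converging in $C^1_{\rm loc}(\mathbb{R}^N\setminus\{0\})$ to some $Z\in C^1(\mathbb{R}^N\setminus\{0\})$. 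The bounds $0\le Z\le C_0\mu$ pass to the limit pointwise.

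Next we show that $Z$ is a weak solution. Take $\varphi\in C^\infty_c(\mathbb{R}^N\setminus\{0\})$. For $j$ large, $\operatorname{supp}\varphi$ lies in the domain of $W_{r_j}$, so $\int A(\nabla W_{r_j})\cdot\nabla\varphi\,dy=0$. The map $A$ is continuous on $\mathbb{R}^N$ (Lemma \ref{lem_norm}(c)), and $\nabla W_{r_j}\to\nabla Z$ uniformly on $\operatorname{supp}\varphi$. Hence $A(\nabla W_{r_j})\to A(\nabla Z)$ uniformly there, and the identity passes to the limit, giving $\Delta_p^HZ=0$ weakly in $\mathbb{R}^N\setminus\{0\}$.

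Finally we pass to the limit in the flux. Fix $t>0$. For $j$ large, the scaling computation preceding the lemma gives $\int_{\partial\hat{\mathcal{W}}_t}A(\nabla W_{r_j})\cdot\nu\,dS=\mathcal{F}$. The sphere $\partial\hat{\mathcal{W}}_t$ is a compact $C^1$ hypersurface (Lemma \ref{lem_norm}(e) for $\hat H_0$) contained in $\mathbb{R}^N\setminus\{0\}$. On it, $A(\nabla W_{r_j})\to A(\nabla Z)$ uniformly, and $\nu$ is continuous and bounded. So the flux of $Z$ through $\partial\hat{\mathcal{W}}_t$ equals $\mathcal{F}$ for every $t>0$. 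This is consistent with Lemma \ref{lem_flux}, which already gives independence from $t$.

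No step is a real obstacle. The only point deserving attention is that the $C^{1,\gamma}$ estimates are uniform in $j$. This holds because the scaled functions satisfy the same equation with the same structural constants: the equation $\Delta_p^H=0$ is invariant under $W\mapsto r^\alpha W(r\,\cdot)$ by the $(p-1)$-homogeneity of $A$. Moreover, the bound $W_{r_j}\le C_0\mu$ is scale invariant, since $r^\alpha\mu(ry)=\mu(y)$, and so gives a $j$-independent $L^\infty$ bound on each fixed annulus.
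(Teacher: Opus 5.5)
Your proposal is correct and follows essentially the same route as the paper's proof: the scale-invariant bound $W_{r_j}\le C_0\mu$ gives uniform $L^\infty$ bounds on fixed annuli, Lemma \ref{lem_harnack}(iii) gives uniform $C^{1,\gamma}$ estimates, a diagonal Arzel\`a--Ascoli extraction follows, and the weak equation and the flux pass to the limit by local uniform convergence of $A(\nabla W_{r_j})$. You also supply the covering and domain bookkeeping that the paper leaves implicit.
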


\begin{proof}
On every compact subset of $\mathbb{R}^N\setminus\{0\}$ the functions $W_{r_j}$ are eventually defined and bounded by $C_0\mu$. Lemma \ref{lem_harnack}(iii) gives uniform $C^{1,\gamma}$ bounds, and a diagonal argument gives a subsequence converging in $C^1_{\rm loc}(\mathbb{R}^N\setminus\{0\})$. The weak equation and the flux pass to the limit because $A(\nabla W_{r_j})\to A(\nabla Z)$ locally uniformly.
\end{proof}

\begin{proof}[Proof of Theorem \ref{thm_pole}]
\emph{Step 1: the quotient limit and (i).} Let $\kappa_+=\limsup_{x\to 0}W/\mu$ and $\kappa_-=\liminf_{x\to 0}W/\mu$, so $0\le\kappa_-\le\kappa_+\le C_0$.

If $\kappa_+=0$, then $W_r\to 0$ uniformly on compact subsets of $\mathbb{R}^N\setminus\{0\}$, every limit in Lemma \ref{lem_blowup_pole} is $Z\equiv 0$, and its flux is $0\neq\mathcal{F}$. Hence $\kappa_+>0$. Choose $x_j\to 0$ with $W(x_j)/\mu(x_j)\to\kappa_+$, and put $r_j=\hat H_0(x_j)$, $y_j=x_j/r_j\in\partial\hat{\mathcal{W}}_1$. Along a subsequence $y_j\to y_0$ and $W_{r_j}\to Z$ as in Lemma \ref{lem_blowup_pole}. Since $\mu(y_j)=1$ and $\mu(x_j)=r_j^{-\alpha}$,
\[
Z(y_0)=\lim_jW_{r_j}(y_j)=\lim_j\frac{W(x_j)}{\mu(x_j)}=\kappa_+\mu(y_0),
\]
and for every $y\neq 0$, $Z(y)=\lim_j\frac{W}{\mu}(r_jy)\,\mu(y)\le\kappa_+\mu(y)$. At a point where $Z=\kappa_+\mu$, the gradients of $Z$ and $\kappa_+\mu$ coincide and do not vanish. Lemma \ref{lem_strongcomp} on the connected set $\mathbb{R}^N\setminus\{0\}$ gives $Z\equiv\kappa_+\mu$. By Lemma \ref{lem_bubble}(c),
\begin{equation}\label{eq_flux_kappa}
\mathcal{F}=-N|\hat{\mathcal{W}}_1|(\alpha\kappa_+)^{p-1}<0.
\end{equation}
Now choose $x_j\to 0$ with $W(x_j)/\mu(x_j)\to\kappa_-$ and a limit $Z'$ as before. Then $Z'\ge\kappa_-\mu$ with equality at some $y_0'\in\partial\hat{\mathcal{W}}_1$. If $\kappa_-=0$, the nonnegative solution $Z'$ vanishes at $y_0'$, so $Z'\equiv 0$ by the strong minimum principle, and its flux is zero, a contradiction. Hence $\kappa_->0$, Lemma \ref{lem_strongcomp} applied to $Z'\ge\kappa_-\mu$ gives $Z'\equiv\kappa_-\mu$, and comparing fluxes with \eqref{eq_flux_kappa} gives $\kappa_-=\kappa_+$. Set $\kappa=\kappa_+$. Then $W/\mu\to\kappa$ as $x\to 0$, (i) holds, and every limit in Lemma \ref{lem_blowup_pole} equals $\kappa\mu$. Consequently
\begin{equation}\label{eq_C1_pole}
W_r\to\kappa\mu\quad\text{in }C^1_{\rm loc}(\mathbb{R}^N\setminus\{0\})\quad\text{as }r\to 0.
\end{equation}

\emph{Step 2: the remainder is bounded.} Put $\Phi=\kappa\mu$, $\bar z=W-\Phi$ and $\varepsilon(r)=\max_{\partial\hat{\mathcal{W}}_r}|W/\mu-\kappa|$, so $\varepsilon(r)\to 0$. Fix $R_0\in(0,R)$ and let $C_+=\max\{0,\max_{\partial\hat{\mathcal{W}}_{R_0}}\bar z\}$ and $C_-=\min\{0,\min_{\partial\hat{\mathcal{W}}_{R_0}}\bar z\}$. For $r$ small, $\kappa-\varepsilon(r)>0$, and the functions $(\kappa\pm\varepsilon(r))\mu+C_\pm$ are $C^1$ weak solutions of $\Delta_p^H=0$ in $\hat{\mathcal{W}}_{R_0}\setminus\overline{\hat{\mathcal{W}}_r}$ by Lemma \ref{lem_bubble}(c). They lie above and below $W$ on both boundary components. Lemma \ref{lem_comparison} gives $(\kappa-\varepsilon(r))\mu+C_-\le W\le(\kappa+\varepsilon(r))\mu+C_+$ in this annulus. Fixing $x$ and letting $r\to 0$,
\[
C_-\le\bar z\le C_+\qquad\text{in }\hat{\mathcal{W}}_{R_0}\setminus\{0\}.
\]

\emph{Step 3: the extrema on spheres have limits.} Let $M(r)=\max_{\partial\hat{\mathcal{W}}_r}\bar z$ and $m(r)=\min_{\partial\hat{\mathcal{W}}_r}\bar z$ for $0<r<R_0$. If $0<r<t<s<R_0$, comparing $W$ with the solution $\Phi+\max\{M(r),M(s)\}$ on $\hat{\mathcal{W}}_s\setminus\overline{\hat{\mathcal{W}}_r}$ by Lemma \ref{lem_comparison} gives $M(t)\le\max\{M(r),M(s)\}$. We claim that $M$ has a limit at $0$. Otherwise $\ell_0=\liminf_{r\to 0}M<L_0=\limsup_{r\to 0}M$, and with $\delta=(L_0-\ell_0)/3$ we may choose successively $s_0$ with $M(s_0)<\ell_0+\delta$, then $t\in(0,s_0)$ with $M(t)>L_0-\delta$, then $r\in(0,t)$ with $M(r)<\ell_0+\delta$. This contradicts $M(t)\le\max\{M(r),M(s_0)\}$. By Step 2 the limit $\beta_+=\lim_{r\to 0}M(r)$ is finite. In the same way $m(t)\ge\min\{m(r),m(s)\}$, and $\beta_-=\lim_{r\to 0}m(r)$ exists, with $\beta_-\le\beta_+$.

\emph{Step 4: $\beta_-=\beta_+$.} By \eqref{eq_C1_pole} and the homogeneity $\nabla\Phi(ry)=r^{-\alpha-1}\nabla\Phi(y)$,
\begin{equation}\label{eq_er}
r^{\alpha+1}\nabla\bar z(ry)\to 0\qquad\text{uniformly on compact subsets of }\mathbb{R}^N\setminus\{0\}.
\end{equation}
Since $|\nabla\Phi|$ is bounded above and below by positive multiples of $\hat H_0^{-\alpha-1}$, \eqref{eq_er} implies the following quantitative separation from the origin: for every fixed compact annulus $K\Subset\mathbb{R}^N\setminus\{0\}$ there are $r_K>0$ and $c_K>0$ such that
\[
\big|\nabla\Phi(y)+t\,r^{\alpha+1}\nabla\bar z(ry)\big|\ge c_K\qquad(y\in K,\ 0<r<r_K,\ 0\le t\le1).
\]
Equivalently, after returning to the original variables, there is $\rho_0>0$ such that for $0<\hat H_0(x)<\rho_0$ the segment joining $\nabla\Phi(x)$ and $\nabla W(x)$ stays away from the origin. There
\[
A(\nabla W)-A(\nabla\Phi)=\mathcal{B}(x)\nabla\bar z,\qquad\mathcal{B}(x)=\int_0^1DA\big(\nabla\Phi(x)+t\nabla\bar z(x)\big)dt.
\]
For $r\to 0$ put $\bar z_r(y)=\bar z(ry)$ and $\mathcal{B}_r(y)=r^{(\alpha+1)(p-2)}\mathcal{B}(ry)$. Since $DA$ is homogeneous of degree $p-2$,
\[
\mathcal{B}_r(y)=\int_0^1DA\big(\nabla\Phi(y)+t\,r^{\alpha+1}\nabla\bar z(ry)\big)dt,
\]
and $\operatorname{div}(\mathcal{B}_r\nabla\bar z_r)=0$ weakly in $\{0<\hat H_0<\rho_0/r\}$, because $\mathcal{B}_r(y)\nabla\bar z_r(y)=r^{(\alpha+1)(p-2)+1}\big(A(\nabla W)-A(\nabla\Phi)\big)(ry)$. By \eqref{eq_er}, the quantitative separation above, and the continuity of $DA$ away from the origin, $\mathcal{B}_r\to\mathcal{B}_0=DA(\nabla\Phi)$ uniformly on compact subsets of $\mathbb{R}^N\setminus\{0\}$. Moreover, on every fixed compact annulus the arguments of $DA$ remain in a compact subset of $\mathbb{R}^N\setminus\{0\}$; hence \eqref{eq_ellipticity} gives ellipticity and boundedness constants for $\mathcal{B}_r$ that are uniform for all sufficiently small $r$. This observation is valid for both $p\ge2$ and the singular range $1<p<2$.

Let $r_j\to 0$. The functions $\bar z_{r_j}$ are bounded by Step 2. The De Giorgi--Nash--Moser estimate \cite[Theorem 8.24]{GT01} gives uniform H\"older bounds on compact annuli, and the Caccioppoli inequality gives uniform $L^2$ bounds for $\nabla\bar z_{r_j}$ there. Along a subsequence, $\bar z_{r_j}\to\tilde z$ locally uniformly in $\mathbb{R}^N\setminus\{0\}$ and $\nabla\bar z_{r_j}\rightharpoonup\nabla\tilde z$ weakly in $L^2_{\rm loc}$. Passing to the limit in the weak formulation gives $\operatorname{div}(\mathcal{B}_0\nabla\tilde z)=0$ in $\mathbb{R}^N\setminus\{0\}$. For every $t>0$, uniform convergence on $\partial\hat{\mathcal{W}}_t$ gives
\[
\max_{\partial\hat{\mathcal{W}}_t}\tilde z=\lim_jM(r_jt)=\beta_+,\qquad\min_{\partial\hat{\mathcal{W}}_t}\tilde z=\beta_-.
\]
Thus $\tilde z\le\beta_+$ in $\mathbb{R}^N\setminus\{0\}$ with equality at some point of $\partial\hat{\mathcal{W}}_1$. The coefficients $\mathcal{B}_0$ are continuous and uniformly elliptic on compact subsets of the connected set $\mathbb{R}^N\setminus\{0\}$, so \cite[Theorem 8.19]{GT01} gives $\tilde z\equiv\beta_+$, hence $\beta_-=\beta_+=:\beta$. Since $m(r)\le\bar z\le M(r)$ on $\partial\hat{\mathcal{W}}_r$, we obtain $\bar z(x)\to\beta$ as $x\to 0$, which is (ii).

\emph{Step 5: (iii).} Let $z=\bar z-\beta$ and $z_r(y)=z(ry)=\bar z_r(y)-\beta$. By (ii), $\sup_{1/2\le \hat H_0\le 4}|z_r|\to 0$. The function $z_r$ solves $\operatorname{div}(\mathcal{B}_r\nabla z_r)=0$ on $\{1/4<\hat H_0<8\}$, where the $\mathcal{B}_r$ are uniformly elliptic and bounded with constants independent of small $r$. Testing with $\eta^2z_r$, where $\eta$ is a cut-off equal to $1$ on $\{1\le \hat H_0\le 2\}$ and supported in $\{1/2<\hat H_0<4\}$, gives the Caccioppoli inequality
\[
\int_{\{1\le \hat H_0\le 2\}}|\nabla z_r|^2dy\le C\int_{\{1/2\le \hat H_0\le 4\}}|z_r|^2dy\to 0.
\]
The last part of (iii) is \eqref{eq_er}, since $\nabla z=\nabla\bar z$.

\emph{Step 6: (iv).} Let $\mathbf{P}=\mathbf{P}[W,0]$ be the field \eqref{eq_pohozaev_field} with $\mathcal{G}=0$, and $Q(r)=\int_{\partial\hat{\mathcal{W}}_r}\mathbf{P}\cdot\nu\,dS$. By Lemma \ref{lem_pohozaev} with $f=0$, $Q$ is constant on $(0,R)$. By the coarea formula and the change of variables $x=ry$,
\[
Q=\frac1r\int_r^{2r}Q(s)\,ds=\frac1r\int_{\{r<\hat H_0<2r\}}\mathbf{P}\cdot\nabla \hat H_0\,dx=r^{N-1}\int_{\{1<\hat H_0<2\}}\mathbf{P}(ry)\cdot\nabla \hat H_0(y)\,dy.
\]
Let $e_r(y)=r^{\alpha+1}\nabla z(ry)=r^\alpha\nabla z_r(y)$. Then $\nabla W(ry)=r^{-\alpha-1}\big(\nabla\Phi(y)+e_r(y)\big)$ and $W(ry)=r^{-\alpha}\Phi(y)+\beta+z_r(y)$. Using $r^{N-1}r^{-(\alpha+1)(p-1)}=1$ and $N-(\alpha+1)p=-\alpha$, a direct substitution gives
\[
r^{N-1}\mathbf{P}(ry)=r^{-\alpha}\mathbf{P}[\Phi,0](y)+a\,\beta\,A(\nabla\Phi(y))+\mathbf{R}_r(y),
\]
where
\begin{multline*}
\mathbf{R}_r=r^{-\alpha}\Big[\big(a\Phi+y\cdot\nabla\Phi\big)\big(A(\nabla\Phi+e_r)-A(\nabla\Phi)\big)+(y\cdot e_r)A(\nabla\Phi+e_r)-\frac yp\big(H(\nabla\Phi+e_r)^p-H(\nabla\Phi)^p\big)\Big]\\
+a\beta\big(A(\nabla\Phi+e_r)-A(\nabla\Phi)\big)+a\,z_r\,A(\nabla\Phi+e_r).
\end{multline*}
On $\{1\le \hat H_0\le 2\}$, $\nabla\Phi$ ranges in a compact subset of $\mathbb{R}^N\setminus\{0\}$ and $e_r\to 0$ uniformly, so for small $r$
\[
|A(\nabla\Phi+e_r)-A(\nabla\Phi)|+|H(\nabla\Phi+e_r)^p-H(\nabla\Phi)^p|\le C|e_r|.
\] Since $r^{-\alpha}|e_r|=|\nabla z_r|$, we get $|\mathbf{R}_r|\le C\big(|\nabla z_r|+|e_r|+|z_r|\big)$, and by (iii), $\int_{\{1<\hat H_0<2\}}|\mathbf{R}_r|\to 0$.

The field $\mathbf{P}[\Phi,0]$ vanishes identically. Indeed, by Lemma \ref{lem_bubble}(c),
\[
y\cdot\nabla\Phi=-\alpha\Phi,\qquad A(\nabla\Phi)=-(\alpha\kappa)^{p-1}\hat H_0^{-N}y,\qquad H(\nabla\Phi)^p=(\alpha\kappa)^p\hat H_0^{-(\alpha+1)p},
\]
and $(\alpha+1)p=\alpha+N$, so
\[
\mathbf{P}[\Phi,0](y)=(\alpha\kappa)^{p-1}\kappa\,\hat H_0^{-\alpha-N}\Big(\alpha-a-\frac\alpha p\Big)\,y=0,
\]
because $\alpha(p-1)/p=a$. Finally, by Lemma \ref{lem_bubble}(c), \eqref{eq_flux_kappa} and the coarea formula, $\int_{\{1<\hat H_0<2\}}a\beta A(\nabla\Phi)\cdot\nabla \hat H_0\,dy=a\beta\mathcal{F}$. Hence $Q=a\beta\mathcal{F}+o(1)$ as $r\to 0$, and since $Q$ is constant, $Q=a\beta\mathcal{F}$.
\end{proof}

\section{First contact and the Schoen-type estimate}\label{sec_schoen}

The argument of this section follows the point-selection and first-contact strategy of \cite{Zha26}, with the Wulff geometry of $H_0$ and $\hat H_0$ in place of Euclidean balls, Theorem \ref{thm_pole} in place of the isolated-singularity input, and Lemma \ref{lem_morrey} in place of the Wolff potential estimate. Throughout this section $q=p^*-1$. We use the identities
\begin{equation}\label{eq_exponents}
ap'=\alpha,\qquad \alpha(p-1)=N-p,\qquad \alpha q=N+p',\qquad q-p+1=\frac pa .
\end{equation}

\begin{proposition}[Point selection]\label{prop_selection}
Let $R_j>0$ and let $u_j$ be positive weak solutions of $-\Delta_p^Hu_j=u_j^q$ in $\mathcal{W}_{3R_j}$. Let $M_j=\max_{\overline{\mathcal{W}_{R_j}}}u_j$ and $m_j=\min_{\overline{\mathcal{W}_{2R_j}}}u_j$, and assume
\begin{equation}\label{eq_contra}
R_j^{N-p}M_jm_j^{p-1}\to\infty.
\end{equation}
Then there are $x_j\in\mathcal{W}_{2R_j}$ such that, with
\[
t_j=u_j(x_j),\quad D_j=2R_j-H_0(x_j),\quad\sigma_j=\frac{D_j}8,\quad\delta_j=t_j^{-1/a},\quad\Gamma_j=\frac{\sigma_j}{\delta_j},\quad\Xi_j=t_j\,\sigma_j^{N-p}m_j^{p-1},
\]
the following hold after passing to a subsequence.
\begin{enumerate}[label=\textup{(\roman*)}]
\item $\Xi_j\to\infty$ and $\Gamma_j^{N-p}=\Xi_j(t_j/m_j)^{p-1}\ge\Xi_j$.
\item $v_j(y)=t_j^{-1}u_j(x_j+\delta_jy)$ is a weak solution of $-\Delta_p^Hv_j=v_j^q$ in $\mathcal{W}_{4\Gamma_j}$ with $v_j(0)=1$ and $0<v_j\le 2^{N-p}$.
\item $v_j\to U$ in $C^1_{\rm loc}(\mathbb{R}^N)$.
\end{enumerate}
\end{proposition}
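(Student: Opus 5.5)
We select $x_j$ by the standard doubling (Pol\'a\v{c}ik--Quittner--Souplet) device adapted to the Wulff distance to the boundary of $\mathcal{W}_{2R_j}$. Concretely, on $\overline{\mathcal{W}_{2R_j}}$ we consider the continuous function
\[
F_j(x)=u_j(x)\,\big(2R_j-H_0(x)\big)^{N-p},
\]
which vanishes on $\partial\mathcal{W}_{2R_j}$ and is positive inside. We take $x_j$ to be a maximum point of $F_j$. On $\overline{\mathcal{W}_{R_j}}$ we have $2R_j-H_0\ge R_j$, so $F_j(x_j)\ge M_jR_j^{N-p}$. Since $\sigma_j=D_j/8$, this gives
\[
\Xi_j=t_j\sigma_j^{N-p}m_j^{p-1}=8^{p-N}F_j(x_j)m_j^{p-1}\ge 8^{p-N}R_j^{N-p}M_jm_j^{p-1}\to\infty
\]
by \eqref{eq_contra}. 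The identity $\Gamma_j^{N-p}=\sigma_j^{N-p}t_j^{(N-p)/a}=\sigma_j^{N-p}t_j^{p}=\Xi_j(t_j/m_j)^{p-1}$ uses $(N-p)/a=p$. Because $x_j\in\mathcal{W}_{2R_j}$, we have $t_j\ge m_j$, and this yields (i).

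For (ii), the scaling invariance in Lemma \ref{lem_bubble}(a) shows that $v_j$ solves the equation wherever it is defined, and $v_j(0)=1$. If $H_0(\delta_jy)<4\sigma_j=D_j/2$, then by the triangle inequality for $H_0$ the point $x=x_j+\delta_jy$ satisfies $2R_j-H_0(x)\ge D_j-H_0(\delta_jy)>D_j/2$. In particular $x\in\mathcal{W}_{2R_j}\subset\mathcal{W}_{3R_j}$, so $v_j$ is defined on $\mathcal{W}_{4\Gamma_j}$. Maximality of $F_j$ then gives
\[
u_j(x)\le t_j\,D_j^{N-p}\big(D_j/2\big)^{p-N}=2^{N-p}t_j,
\]
that is, $v_j\le 2^{N-p}$.

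For (iii), (i) gives $\Gamma_j\to\infty$, so the domains exhaust $\mathbb{R}^N$. Lemma \ref{lem_harnack}(ii),(iii) applied to the bounded $v_j$, with potential $v_j^{q-p+1}\le C$, gives local $C^{1,\gamma}$ bounds and positivity. A subsequence converges in $C^1_{\rm loc}$ to a solution $V$ of $-\Delta_p^HV=V^q$ on $\mathbb{R}^N$ with $0\le V\le 2^{N-p}$ and $V(0)=1$. By Lemma \ref{lem_locreg}, $V>0$.

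The main obstacle is identifying $V$ as $U$. Theorem \ref{thm_profile} requires $V\le 1=V(0)$, but we only have $V\le 2^{N-p}$. We plan to settle this using the lower bound $m_j$. By Lemma \ref{lem_morrey} and the weak Harnack inequality, the blow-down information $\Xi_j\to\infty$ should force the profile to be of finite energy or bounded type. The intended route is to renormalize first. Since $V$ is bounded, set $S=\sup V\ge 1$ and, if necessary, choose points $y_k$ with $V(y_k)\to S$. Translating and passing to a limit produces a bounded solution $\tilde V$ attaining its maximum $S$ at $0$. Rescaling by $S$ then gives a profile $\le 1$ equal to $1$ at $0$, and Theorem \ref{thm_profile} identifies it as a bubble.

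From this we want to deduce that $V$ itself is a bubble $U_{z,\lambda}$. Here we would use that Theorem \ref{thm_profile}'s rigidity argument (Proposition \ref{prop_Pbound}) applies to any bounded solution after scaling so that $\sup V=1$. The bound $P\le c$ then holds, and the argument of Proposition \ref{prop_profile} runs from a point where $w$ is minimal. If $\sup$ is not attained, we must instead rule out the limiting configuration by monotonicity of $P$.

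We then need $z=0$ and $\lambda=1$. If the selection were refined so that $x_j$ is also a near-maximum of $u_j$ on the relevant ball, the conditions $v_j(0)=1$ and $v_j\le 1+o(1)$ nearby would make $0$ a maximum of $V$ with value $1$, forcing $V=U$. So I would first try to strengthen the selection by a second doubling step, replacing $x_j$ with a local maximum point of $u_j$ in $\mathcal{W}_{\sigma_j}(x_j)$. This changes $t_j$ by at most the factor $2^{N-p}$ and preserves (i) and (ii). After this, (iii) follows from Theorem \ref{thm_profile}.
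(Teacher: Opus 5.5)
\textbf{Verdict: there is a genuine gap in (iii); (i) and (ii) are fine.} Your parts (i) and (ii) are correct and follow the paper's argument: you maximize $d_j^{N-p}u_j$, with $d_j=2R_j-H_0$, over $\overline{\mathcal W_{2R_j}}$.

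The gap in (iii) comes from not extracting enough from that same maximality. You used it only on $\mathcal W_{D_j/2}(x_j)$, which gives $v_j\le 2^{N-p}$. On a \emph{fixed} compact set, however, the bound is asymptotically sharp. If $H_0(y)\le\rho$, the triangle inequality and $\delta_j/D_j=1/(8\Gamma_j)$ give $d_j(x_j+\delta_jy)\ge D_j-\delta_j\rho=D_j\big(1-\rho/(8\Gamma_j)\big)$. Maximality of $d_j^{N-p}u_j$ at $x_j$ then yields
\[
v_j(y)\le\Big(1-\frac{\rho}{8\Gamma_j}\Big)^{-(N-p)}\qquad (H_0(y)\le\rho).
\]
Since $\Gamma_j\to\infty$ by (i), every $C^1_{\rm loc}$ limit satisfies $V\le 1=V(0)$, and Theorem~\ref{thm_profile} gives $V=U$ immediately. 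This one observation is the missing idea. No renormalization or second selection is needed.

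The detours you propose instead do not close the argument.

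\emph{Translates.} Taking limits of $V(\cdot+y_k)$ with $V(y_k)\to\sup V$ identifies the \emph{translated} limit as a bubble, but says nothing about $V$ itself. To conclude that $V$ is a bubble you would need a Liouville theorem for bounded solutions whose supremum need not be attained. Propositions~\ref{prop_Pbound} and \ref{prop_profile} do not supply this. You do get $P\le c$, but the open--closed argument on $\{\Pi=0\}$ in Steps 2--4 of Proposition~\ref{prop_profile} starts from a point with $w=1$, where $\Pi=0$. Without such a point that set may be empty, and ``monotonicity of $P$'' is not an argument. In this paper, bounded solutions are classified only through Theorem~\ref{thm_schoen}, which is proved using this very proposition, so that route is circular.

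\emph{Second doubling.} If $x_j'$ maximizes $u_j$ on $\overline{\mathcal W_{\sigma_j}(x_j)}$, you get $v_j'\le 1$ only on the rescaled image of that Wulff ball. The point $x_j'$ may lie on its boundary, or at bounded rescaled distance from it. In the limit you then only get $V\le 1$ on a half-space, which Theorem~\ref{thm_profile} cannot use. As a minor point, (ii) would then hold only with a constant worse than $2^{N-p}$.

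\emph{Morrey and weak Harnack.} The appeal to Lemma~\ref{lem_morrey} and the weak Harnack inequality to force ``finite energy or bounded type'' is not made precise, and it would not give $V\le 1$ in any case.
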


\begin{proof}
Let $d_j(x)=2R_j-H_0(x)$ and let $x_j$ maximize $d_j^{N-p}u_j$ over $\overline{\mathcal{W}_{2R_j}}$. The maximum is positive and $d_j$ vanishes on $\partial\mathcal{W}_{2R_j}$, so $x_j\in\mathcal{W}_{2R_j}$. If $y_j$ is a maximum point of $u_j$ on $\overline{\mathcal{W}_{R_j}}$, then $d_j(y_j)\ge R_j$, hence $t_jD_j^{N-p}\ge M_jR_j^{N-p}$, and \eqref{eq_contra} gives $\Xi_j=8^{p-N}t_jD_j^{N-p}m_j^{p-1}\to\infty$. Since $\delta_j^{-(N-p)}=t_j^p$, we have $\Gamma_j^{N-p}=\sigma_j^{N-p}t_j^p=\Xi_j(t_j/m_j)^{p-1}$, and $t_j\ge m_j$. This is (i).

If $H_0(x-x_j)<D_j/2$, the triangle inequality for $H_0$ gives $d_j(x)>D_j/2$, and maximality gives $u_j(x)\le(D_j/d_j(x))^{N-p}t_j<2^{N-p}t_j$. The equation is invariant under $u\mapsto t^{-1}u(x_j+t^{-1/a}\,\cdot)$, because $t^{1-p}t^{-p/a}t^q=1$ by \eqref{eq_exponents}. Since $\mathcal{W}_{D_j/2}(x_j)$ corresponds to $\mathcal{W}_{4\Gamma_j}$, this gives (ii).

For fixed $\rho>0$ and $j$ large, $H_0(y)\le\rho$ implies $d_j(x_j+\delta_jy)\ge D_j-\delta_j\rho=D_j(1-\rho/(8\Gamma_j))$, hence
\begin{equation}\label{eq_sharp_bound}
v_j(y)\le\Big(1-\frac{\rho}{8\Gamma_j}\Big)^{-(N-p)}\qquad(H_0(y)\le\rho).
\end{equation}
By Lemma \ref{lem_harnack}(ii),(iii), a subsequence converges in $C^1_{\rm loc}(\mathbb{R}^N)$ to a weak solution $V$ in $\mathbb{R}^N$ with $V(0)=1$ and $V>0$. Since $\Gamma_j\to\infty$, \eqref{eq_sharp_bound} gives $V\le 1$. Theorem \ref{thm_profile} gives $V=U$.
\end{proof}

\begin{lemma}\label{lem_distance_selection}
Let $r_*>0$, $e\in\mathbb{R}^N$, and let $\Theta>0$ be continuous on $\overline{B_{r_*}(e)}$. With $d(x)=r_*-|x-e|$ there is $z\in B_{r_*}(e)$ such that $d(z)\Theta(z)\ge\frac{r_*}2\sup_{B_{r_*/2}(e)}\Theta$ and $\Theta\le 2\Theta(z)$ on $B_{d(z)/2}(z)$.
\end{lemma}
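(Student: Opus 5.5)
The plan is the standard doubling/point-selection argument: maximize the weighted function $F(x)=d(x)\Theta(x)$ over the closed ball $\overline{B_{r_*}(e)}$.

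Since $\Theta$ is continuous and positive on this compact set and $d$ is continuous, nonnegative, and vanishes exactly on $\partial B_{r_*}(e)$, the function $F$ attains its maximum at some point $z$. The maximum is positive, because $F(e)=r_*\Theta(e)>0$. Hence $z\notin\partial B_{r_*}(e)$, that is, $z\in B_{r_*}(e)$ and $d(z)>0$.

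For the first inequality, take any $x\in B_{r_*/2}(e)$. Then $d(x)=r_*-|x-e|>r_*/2$, so maximality gives
\[
d(z)\Theta(z)\ge d(x)\Theta(x)\ge \frac{r_*}{2}\Theta(x).
\]
Taking the supremum over such $x$ yields $d(z)\Theta(z)\ge\frac{r_*}2\sup_{B_{r_*/2}(e)}\Theta$.

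For the second claim, let $x\in B_{d(z)/2}(z)$. By the triangle inequality,
\[
|x-e|\le|z-e|+|x-z|<|z-e|+d(z)/2 .
\]
Hence $d(x)>d(z)-d(z)/2=d(z)/2>0$, and in particular $x\in B_{r_*}(e)$. Maximality then gives
\[
\Theta(x)\le\frac{d(z)\Theta(z)}{d(x)}<2\Theta(z).
\]

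There is no real obstacle here. The only points needing care are ensuring that the maximizer is interior, which is why positivity of $\Theta$ is used, and checking that the small ball $B_{d(z)/2}(z)$ stays inside the domain where $\Theta$ is defined. The latter follows from the same triangle-inequality estimate, exactly as in the proof of Proposition \ref{prop_selection} with Euclidean distance in place of $H_0$.
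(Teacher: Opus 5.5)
Your proof is correct and follows the paper's argument: you maximize $d\Theta$ over $\overline{B_{r_*}(e)}$, use $d\ge r_*/2$ on $B_{r_*/2}(e)$ to get the first inequality, and use $d(x)\ge d(z)/2$ on $B_{d(z)/2}(z)$ together with maximality to get the doubling bound. You also spell out why the maximizer is interior, namely $d(e)\Theta(e)>0$, a step the paper leaves implicit.
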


\begin{proof}
Let $z$ maximize $d\,\Theta$ on $\overline{B_{r_*}(e)}$. Since $d\ge r_*/2$ on $B_{r_*/2}(e)$, the first claim holds, and $z$ is interior. If $|x-z|\le d(z)/2$, then $d(x)\ge d(z)/2$, and $d(x)\Theta(x)\le d(z)\Theta(z)$ gives $\Theta(x)\le 2\Theta(z)$.
\end{proof}

\begin{lemma}[No concentration at a contact scale]\label{lem_no_concentration}
Let $e_j\in\partial\hat{\mathcal{W}}_1$, let $\varepsilon_j>0$, and let $W_j$ be positive weak solutions of
\[
-\Delta_p^H W_j=\varepsilon_jW_j^q
\]
in a neighborhood of $B_{8r_1}(e_j)$, where $r_1>0$ is fixed and $B_{8r_1}(e_j)\subset\hat{\mathcal{W}}_2$. Assume that, for constants $C_0,C_{\rm M}>0$ independent of $j$,
\[
\varepsilon_j^aW_j\le C_0\ \text{ on }B_{8r_1}(e_j),
\qquad
\varpi_j(B_r(e_j))\le C_{\rm M}r^{N-p}\quad(0<r\le r_1),
\qquad d\varpi_j=\varepsilon_jW_j^q\,dx.
\]
Then there exist $r_*\in(0,r_1/2]$ and $C_*>0$, independent of $j$, such that
\[
\varepsilon_j^{1/p}W_j^{1/a}\le C_*
\qquad\text{on }B_{r_*/2}(e_j).
\]
\end{lemma}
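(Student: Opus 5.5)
The plan is a blow-up argument by contradiction. It uses the scale-invariant quantity $\Theta_j=\varepsilon_j^{1/p}W_j^{1/a}$, whose reciprocal is the natural concentration length. Set $u_j=\varepsilon_j^{a/p}W_j$. Since $q-p+1=p/a$ by \eqref{eq_exponents}, $u_j$ solves the unweighted equation $-\Delta_p^Hu_j=u_j^q$, and $\Theta_j=u_j^{1/a}$.

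\emph{Reduction to $\varepsilon_j\to0$.} The hypothesis $\varepsilon_j^aW_j\le C_0$ gives
\[
\Theta_j^a=\varepsilon_j^{a/p-a}\,\varepsilon_j^aW_j\le C_0\,\varepsilon_j^{-a/p'}.
\]
So $\Theta_j$ can only blow up along a subsequence with $\varepsilon_j\to0$. I will take $r_*=r_1/2$ and suppose, for contradiction, that $\sup_{B_{r_*/2}(e_j)}\Theta_j\to\infty$ along a subsequence. Along that subsequence $\varepsilon_j\to0$.

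\emph{Blow-up.} I apply Lemma \ref{lem_distance_selection} to $\Theta_j$ on $\overline{B_{r_*}(e_j)}$. This gives points $z_j$ with $d(z_j)\Theta_j(z_j)\to\infty$ and $\Theta_j\le2\Theta_j(z_j)$ on $B_{d(z_j)/2}(z_j)$. Put $t_j=u_j(z_j)$, $\delta_j=t_j^{-1/a}=\Theta_j(z_j)^{-1}\to0$, and
\[
v_j(y)=t_j^{-1}u_j(z_j+\delta_jy).
\]
Exactly as in Proposition \ref{prop_selection}(ii), $v_j$ solves $-\Delta_p^Hv_j=v_j^q$ with $v_j(0)=1$ and $0<v_j\le2^a$ on Euclidean balls of radius $d(z_j)\Theta_j(z_j)/2\to\infty$. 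Lemma \ref{lem_harnack}(ii),(iii) then give $C^1_{\rm loc}$ convergence of a subsequence to an entire solution $V$ with $V(0)=1$ and $V>0$. I only need that $\int_{B_1}V^q\,dy=:m_0>0$, which holds because $V$ is continuous and $V(0)=1$. I do not need to identify $V$.

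\emph{Mass count versus Morrey.} Here I use $W_j=\varepsilon_j^{-a/p}u_j$, the identity $aq=b+1$ (Lemma \ref{lem_bubble}), and $N/a=p^*$. Changing variables gives
\[
\varpi_j(B_{\delta_j}(z_j))=\varepsilon_j^{1-(b+1)/p}\,t_j^{q-N/a}\int_{B_1}v_j^q\,dy=\varepsilon_j^{-a(p-1)/p}\,t_j^{-1}\int_{B_1}v_j^q\,dy,
\]
where I used $b+1-p=a(p-1)$ and $q-p^*=-1$. The hypothesis $\varepsilon_j^aW_j\le C_0$ bounds $t_j\le C_0\varepsilon_j^{a/p-a}$. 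Hence
\[
\varpi_j(B_{\delta_j}(z_j))\ge C_0^{-1}\,\varepsilon_j^{a-a/p-a(p-1)/p}\Big(\tfrac{m_0}{2}\Big)=C_0^{-1}\,\varepsilon_j^{a(2-p)/p}\,\tfrac{m_0}{2}.
\]
On the other hand, $B_{\delta_j}(z_j)\subset B_{2\delta_j}(e_j)$ for large $j$ cannot be assumed, since $z_j$ may sit anywhere in $B_{r_*}(e_j)$. Instead I use $B_{\delta_j}(z_j)\subset B_{r_1}(e_j)$, which holds because $r_*+\delta_j\le r_1$. The Morrey hypothesis then gives $\varpi_j(B_{\delta_j}(z_j))\le C_{\rm M}r_1^{N-p}$. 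The better route is to take the ball in the Morrey hypothesis of radius $\rho$ comparable to $\delta_j$, so that the right-hand side is $C(\rho\delta_j)^{N-p}=C\rho^{N-p}t_j^{-p}$. The resulting inequality is
\[
\int_{B_\rho}v_j^q\,dy\le C\rho^{N-p}\,\varepsilon_j^{a(p-1)/p}\,t_j^{1-p}.
\]
The Morrey hypothesis is stated only for balls centred at $e_j$. If needed, I would recover the bound at arbitrary centres $z_j$ from Lemma \ref{lem_morrey} applied to $W_j$, together with a Harnack comparison of $W_j(z_j)$ with its size on a fixed scale.

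\emph{Main obstacle.} The main obstacle is bookkeeping the powers of $\varepsilon_j$ and $t_j$ so that the right-hand side above provably tends to $0$ while the left-hand side tends to $\int_{B_\rho}V^q>0$. The mechanism I expect is the following. A concentrating profile of $-\Delta_p^HW=\varepsilon W^q$ carries $\varpi$-mass of order $\varepsilon^{-a(p-1)/p}$, which diverges as $\varepsilon_j\to0$. This is incompatible with the uniform bound $\varpi_j(B_r(e_j))\le C_{\rm M}r^{N-p}$. If the direct estimate does not close, I would first try choosing $z_j$ via $\Theta_j$ restricted so that $\varepsilon_j^aW_j(z_j)$ is bounded below. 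Such a choice is possible by the weak Harnack inequality, Lemma \ref{lem_harnack}(i), applied to the supersolution $W_j$ on $B_{8r_1}(e_j)$. With it, $t_j\simeq\varepsilon_j^{a/p-a}$, the exponents become explicit, and the contradiction follows.
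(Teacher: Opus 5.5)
There is a genuine gap, exactly at the step you call the main obstacle, and it has two parts.

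\emph{The exponent is miscomputed.} You have $a-\frac ap-\frac{a(p-1)}p=0$, not $\frac{a(2-p)}p$. Indeed $\varepsilon_j^{-a(p-1)/p}t_j^{-1}=(\varepsilon_j^aW_j(z_j))^{-1}$, so your change of variables (which is correct) gives
\[
\varpi_j\big(B_{\delta_j}(z_j)\big)=\frac{1}{\varepsilon_j^aW_j(z_j)}\int_{B_1}v_j^q\,dy\ \ge\ \frac1{C_0}\int_{B_1}v_j^q\,dy .
\]
This is a fixed positive quantum, not a mass of order $\varepsilon_j^{-a(p-1)/p}$; that heuristic forgets that $t_j=\Theta_j(z_j)^a\to\infty$. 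Selecting $z_j$ with $\varepsilon_j^aW_j(z_j)$ bounded below changes nothing here. Lemma~\ref{lem_morrey} at the centre $z_j$ does not help either. Its bound carries the factor $W_j(z_j)^{p-1}$, and at the intrinsic scale it collapses to the scale-invariant inequality $\int_{B_\rho}v_j^q\le C\rho^{N-p}$, which every bubble satisfies. No Harnack comparison can control $W_j(z_j)=\varepsilon_j^{-a/p}t_j\to\infty$.

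\emph{You cannot fix $r_*=r_1/2$; this is the decisive problem.} A concentration point carries only a quantum of mass of size about $1/C_0$. The Morrey hypothesis excludes it only inside balls $B_{2r_*}(e_j)$ with $C_{\rm M}(2r_*)^{N-p}$ below that quantum, so $r_*$ must depend on $C_0$ and $C_{\rm M}$. At $r_*=r_1/2$ the conclusion is in fact false. Take $W_j=\varepsilon_j^{-a/p}U_{z,\lambda_j}$ with $|z-e_j|=r_1/8$, $\lambda_j=C_0^{-1/a}\varepsilon_j^{1/p'}$ and $\varepsilon_j\to0$. Then:
\begin{itemize}
\item $\varepsilon_j^aW_j\le C_0$;
\item the total $\varpi_j$-mass is $M_U/C_0$, and the density is $O(\lambda_j^{p'}|x-z|^{-N-p'})$ away from $z$, so the Morrey bound at $e_j$ holds once $C_{\rm M}$ is large;
\item yet $\Theta_j(z)=\lambda_j^{-1}\to\infty$ with $z\in B_{r_1/4}(e_j)$.
\end{itemize}

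\emph{The fix.} The missing idea is to make the quantum structural and then shrink $r_*$. Suppose $W_j\le2^aW_j(z)$ on $B_{2\varrho}(z)$, with $\varrho=\Theta_j(z)^{-1}$. The $C^{1,\gamma}$ bound for the rescaled solution gives $W_j\ge W_j(z)/2$ on $B_{\varrho_0\varrho}(z)$ with $\varrho_0$ structural. Hence $\varpi_j(B_{\varrho_0\varrho}(z))\ge|B_{\varrho_0}|/(2^qC_0)=:m_*$. This bound must not depend on your limit $V$, because $r_*$ is fixed before any subsequence is taken. Now choose $r_*$ with $C_{\rm M}(2r_*)^{N-p}<m_*$. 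The intrinsic ball around the point $z_j\in B_{r_*}(e_j)$ given by Lemma~\ref{lem_distance_selection} lies in $B_{2r_*}(e_j)$. So only the Morrey bound centred at $e_j$ is used, and the contradiction is immediate. This is the paper's argument; your rescaling and compactness steps are sound but unnecessary.
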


\begin{proof}
Set $\Theta_j=\varepsilon_j^{1/p}W_j^{1/a}$. We first record a scale-invariant lower bound for the mass of an intrinsic ball. Let $z$ be a point, put $\varrho=\Theta_j(z)^{-1}$, and assume that $B_{2\varrho}(z)$ lies in the domain and that $W_j\le 2^aW_j(z)$ on $B_{2\varrho}(z)$. Then
\[
\widetilde V(y)=\frac{W_j(z+\varrho y)}{W_j(z)}
\]
satisfies $\widetilde V(0)=1$, $0<\widetilde V\le2^a$ on $B_2$, and $-\Delta_p^H\widetilde V=\widetilde V^q$ there, because $q-p+1=p/a$. Lemma \ref{lem_harnack}(iii) gives $|\nabla\widetilde V|\le C_2$ on $B_1$, with $C_2$ structural. Hence $\widetilde V\ge1/2$ on $B_{\varrho_0}$, where $\varrho_0=\min\{1,(2C_2)^{-1}\}$. Using $1-N/p=-a$ and $q-N/a=-1$, we obtain
\begin{equation}\label{eq_quantum}
\varpi_j\big(B_{\varrho_0\varrho}(z)\big)
=\varepsilon_jW_j(z)^q\varrho^N\int_{B_{\varrho_0}}\widetilde V^q\,dy
\ge \frac{|B_{\varrho_0}|}{2^q}\frac{1}{\varepsilon_j^aW_j(z)}
\ge \frac{|B_{\varrho_0}|}{2^qC_0}=:m_*.
\end{equation}
Choose $r_*\in(0,r_1/2]$ so small that $C_{\rm M}(2r_*)^{N-p}<m_*$. Suppose, contrary to the conclusion, that along a subsequence
$\sup_{B_{r_*/2}(e_j)}\Theta_j\to\infty$. Lemma \ref{lem_distance_selection}, applied with $d_j'(x)=r_*-|x-e_j|$, gives $z_j\in B_{r_*}(e_j)$ such that
\[
d_j'(z_j)\Theta_j(z_j)\to\infty,
\qquad
\Theta_j\le2\Theta_j(z_j)
\quad\text{on }B_{d_j'(z_j)/2}(z_j).
\]
With $\varrho_j=\Theta_j(z_j)^{-1}$, for large $j$ we have $2\varrho_j\le d_j'(z_j)/2$ and therefore $W_j\le2^aW_j(z_j)$ on $B_{2\varrho_j}(z_j)$. This ball lies in $B_{2r_*}(e_j)\subset B_{8r_1}(e_j)$. By \eqref{eq_quantum},
\[
\varpi_j(B_{2r_*}(e_j))\ge m_*,
\]
whereas the assumed Morrey bound gives $\varpi_j(B_{2r_*}(e_j))\le C_{\rm M}(2r_*)^{N-p}<m_*$. This contradiction proves the lemma.
\end{proof}

\begin{proposition}[First contact]\label{prop_firstcontact}
Let $\Gamma_j\to\infty$ and let $v_j$ be positive weak solutions of $-\Delta_p^Hv_j=v_j^q$ in $\hat{\mathcal{W}}_{4\Gamma_j}$ with $v_j\le C_0$ and $v_j\to U$ in $C^1_{\rm loc}(\mathbb{R}^N)$. Then for every $\vartheta>1$ and every sequence $S_j\ge 1$ with $S_j/\Gamma_j\to 0$,
\[
v_j<\vartheta\,U\quad\text{in }\hat{\mathcal{W}}_{S_j}\qquad\text{for all large }j.
\]
\end{proposition}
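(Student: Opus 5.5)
We argue by contradiction, with a blow-down at the first contact radius. Suppose that for some $\vartheta>1$ and some $S_j$ with $1\le S_j$, $S_j/\Gamma_j\to0$, the set $\{v_j\ge\vartheta U\}\cap\hat{\mathcal W}_{S_j}$ is nonempty along a subsequence. Set $\rho_j=\inf\{\rho:\ v_j\ge\vartheta U\text{ somewhere on }\overline{\hat{\mathcal W}_\rho}\}$. Since $v_j\to U$ locally uniformly and $\vartheta>1$, we have $\rho_j\to\infty$. Also $\rho_j\le S_j$, so $\rho_j/\Gamma_j\to0$. By construction $v_j<\vartheta U$ in $\hat{\mathcal W}_{\rho_j}$, and there is $x_j\in\partial\hat{\mathcal W}_{\rho_j}$ with $v_j(x_j)=\vartheta U(x_j)$.

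\emph{Step 1: rescaling.} Put $W_j(y)=\rho_j^{\alpha}v_j(\rho_jy)$ and $e_j=x_j/\rho_j\in\partial\hat{\mathcal W}_1$. By \eqref{eq_exponents}, $-\Delta_p^HW_j=\varepsilon_jW_j^q$ with $\varepsilon_j=\rho_j^{-p'}\to0$, in $\hat{\mathcal W}_{4\Gamma_j/\rho_j}$, a domain that exhausts $\mathbb R^N$. Lemma \ref{lem_bubble}(b) gives $\rho_j^\alpha U(\rho_j\cdot)\to\kappa_U\mu$ locally uniformly on $\mathbb R^N\setminus\{0\}$, and also $W_j\le\vartheta\kappa_U\mu$ in $\hat{\mathcal W}_1\setminus\{0\}$.

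\emph{Step 2: control near $e_j$ from outside.} Since $\varepsilon_j^aW_j=v_j(\rho_j\cdot)\le C_0$, the first hypothesis of Lemma \ref{lem_no_concentration} holds. For the Morrey hypothesis, apply Lemma \ref{lem_morrey} to $W_j$ at centres $x\in B_{r_1}(e_j)$. The values $W_j(x)$ there must be bounded uniformly, and I would get this from the weak Harnack inequality (Lemma \ref{lem_harnack}(i)) together with the bound inside $\hat{\mathcal W}_1$, which covers part of every such ball. Lemma \ref{lem_no_concentration} then gives $\varepsilon_jW_j^q\le C$ near $e_j$. The Harnack inequality and $C^{1,\gamma}$ estimates then give $C^1$ convergence of $W_j$, along a subsequence, on a neighbourhood of $e_0=\lim e_j$ as well as on compact subsets of $\hat{\mathcal W}_1\setminus\{0\}$. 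The limit $Z$ is $\Delta_p^H$-harmonic there, satisfies $Z\le\vartheta\kappa_U\mu$ in $\hat{\mathcal W}_1\setminus\{0\}$, and has $Z(e_0)=\vartheta\kappa_U\mu(e_0)$.

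\emph{Step 3: flux and Pohozaev.} Write $R$ for a fixed large radius, with $R<\delta\rho_j$ eventually. By Lemma \ref{lem_flux}, the flux of $A(\nabla W_j)$ through $\partial\hat{\mathcal W}_\delta$ equals $-\int_{\hat{\mathcal W}_{\delta\rho_j}}v_j^q$. This integral tends to $-M_U$, because the part in $\hat{\mathcal W}_R$ converges to the mass of $U$ there, and the part in the annulus is at most $\vartheta^q\int_{\{\hat H_0>R\}}U^q$, which is small for $R$ large. Hence $Z$ has flux $-M_U=-N|\hat{\mathcal W}_1|(\alpha\kappa_U)^{p-1}$ by Lemma \ref{lem_bubble}(b). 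Theorem \ref{thm_pole} applies with $C_0=\vartheta\kappa_U$ and gives $\kappa=\kappa_U$ together with the expansion $Z=\kappa_U\mu+\beta+o(1)$. For the critical power, $N\mathcal G(u)-a\,u\,f(u)=(N/p^*-a)u^{p^*}=0$. So by Lemma \ref{lem_pohozaev} the Pohozaev flux of $v_j$ vanishes on every Wulff sphere, and after scaling so does that of $W_j$ up to a term of order $\varepsilon_j$. Passing to the limit and using Theorem \ref{thm_pole}(iv) gives $a\beta\mathcal F=0$, hence $\beta=0$. Thus $Z-\kappa_U\mu\to0$ at the origin.

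\emph{Step 4: contradiction at the contact point (main obstacle).} The difference $Z-\vartheta\kappa_U\mu$ is $\le0$ in $\hat{\mathcal W}_1\setminus\{0\}$ and vanishes at $e_0$. Near $e_0$ both gradients are close to $\nabla(\kappa_U\mu)\ne0$, so as in Lemma \ref{lem_strongcomp} this difference solves a uniformly elliptic linear equation there. If the contact were interior, Lemma \ref{lem_strongcomp} would force $Z\equiv\vartheta\kappa_U\mu$, whose flux $-N|\hat{\mathcal W}_1|(\vartheta\alpha\kappa_U)^{p-1}\ne-M_U$ is a contradiction.

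The contact is only on the boundary sphere, and this is where I expect the real difficulty. I would first try to combine the Hopf lemma for the linearized operator at $e_0$ with the $C^1$ convergence across $\partial\hat{\mathcal W}_1$ obtained in Step 2. The hope is to exclude a strictly signed normal derivative by using the minimality of $\rho_j$ for the unscaled functions. If that fails, I would compare $Z$ on $\hat{\mathcal W}_1\setminus\{0\}$ with the barrier $\kappa_U\mu+(\vartheta-1)\kappa_U$. This barrier dominates $Z$ near $0$ (where $\beta=0$) and on the part of $\partial\hat{\mathcal W}_1$ where $Z\le\vartheta\kappa_U\mu$. Equality at $e_0$, combined with Lemma \ref{lem_strongcomp} and the Hopf lemma for the linearization, should again force an identity incompatible with the flux computed in Step 3.
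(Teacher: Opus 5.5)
\textbf{There is a genuine gap in Step 4.} Steps 1--3 follow the paper's proof: the blow-down at the first-contact radius, the flux $-M_U$, Theorem \ref{thm_pole} with $\kappa=\kappa_U$, Pohozaev giving $\beta=0$, and Lemma \ref{lem_no_concentration} for regularity across $\partial\hat{\mathcal W}_1$ near $e_0$. The problem is not technical: the information you hold at that point cannot produce a contradiction.

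Take the Euclidean case $p=2$, $N\ge 3$, so $\mu=|x|^{2-N}$. Fix $1<\vartheta<2$, $|e_0|=1$, and set
\[
Z(x)=\kappa\,|x|^{2-N}+(\vartheta-1)\kappa\,x\cdot e_0 .
\]
This $Z$ is consistent with everything you have derived:
\begin{itemize}
\item it is positive and harmonic in $B_1\setminus\{0\}$, and smooth across $\partial B_1$;
\item $Z<\vartheta\kappa\mu$ in $B_1\setminus\{0\}$, with equality at $e_0$;
\item it has the same flux as $\kappa\mu$, since the linear term has zero flux;
\item $Z-\kappa\mu\to 0$ at the origin, so $\beta=0$;
\item it lies below your barrier $\kappa\mu+(\vartheta-1)\kappa$, touching it at $e_0$.
\end{itemize}
Minimality of $\rho_j$ gives the comparison only \emph{inside} $\hat{\mathcal W}_1$. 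A Hopf lemma at $e_0$ would therefore only give a strictly signed outward normal derivative, i.e.\ $Z$ crosses $\vartheta\kappa_U\mu$ (or your barrier) at $e_0$, which nothing forbids. So neither of your two routes can close. In addition, a Hopf lemma for the linearized divergence-form operator is not available in general here, since $DA$ is only continuous.

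\textbf{The missing ingredient is a lower bound} $Z\ge\kappa_U\mu$ in $\hat{\mathcal W}_1\setminus\{0\}$; the example violates it where $x\cdot e_0<0$. This is the real use of $S_j/\Gamma_j\to0$: $W_j$ is a positive supersolution on $\hat{\mathcal W}_{4\Gamma_j/\rho_j}$, and $L_j=2\Gamma_j/\rho_j\ge 2\Gamma_j/S_j\to\infty$.
\begin{enumerate}
\item Choose $r_\delta$ so that $W_j>(1-\delta)\kappa_U\mu$ on $\partial\hat{\mathcal W}_{r_\delta}$ for large $j$ (possible since $Z/\mu\to\kappa_U$).
\item Apply Lemma \ref{lem_comparison} on $\hat{\mathcal W}_{L_j}\setminus\overline{\hat{\mathcal W}_{r_\delta}}$ against $(1-\delta)\kappa_U(\mu-L_j^{-\alpha})$. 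Letting $j\to\infty$ and then $\delta\to0$ gives $Z\ge\kappa_U\mu$.
\item By Lemma \ref{lem_strongcomp}, either $Z\equiv\kappa_U\mu$ or $Z>\kappa_U\mu$. In the second case, set $g_0=\min_{\partial\hat{\mathcal W}_{1/2}}(Z-\kappa_U\mu)$ and compare with $(1-\delta)\kappa_U\mu+g_0/2$ on $\hat{\mathcal W}_{1/2}\setminus\overline{\hat{\mathcal W}_\varepsilon}$. This gives $Z\ge\kappa_U\mu+g_0/2$ near $0$, contradicting $\beta=0$.
\item Hence $Z\equiv\kappa_U\mu$ in the punctured ball. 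The continuity at $e_0$ you already obtained in Step 2 then gives $\vartheta\kappa_U=Z(e_0)=\kappa_U\mu(e_0)=\kappa_U$, a contradiction, with no boundary-point lemma needed.
\end{enumerate}

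\textbf{A smaller point in Step 2.} Lemma \ref{lem_no_concentration} only needs the Morrey bound on balls centred at $e_j$. So Lemma \ref{lem_morrey} at $x_0=e_j$, together with $W_j(e_j)\to\vartheta\kappa_U$, suffices. The weak Harnack inequality bounds averages by the infimum and cannot give the pointwise upper bounds at other centres that you invoke.
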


\begin{proof}
Suppose the contrary. Passing to a subsequence, for every $j$ there is a point of $\overline{\hat{\mathcal{W}}_{S_j}}$ where $v_j\ge\vartheta U$.

\emph{Step 1: first contact and blow-down.} Let $\rho_j=\inf\{r\in(0,S_j]:\max_{\overline{\hat{\mathcal{W}}_r}}v_j/U\ge\vartheta\}$. Since $v_j/U\to 1$ locally uniformly, $\rho_j\to\infty$. By continuity and minimality, $v_j<\vartheta U$ in $\hat{\mathcal{W}}_{\rho_j}$ and $v_j(\rho_je_j)=\vartheta U(\rho_je_j)$ for some $e_j\in\partial\hat{\mathcal{W}}_1$. Let
\[
W_j(x)=\rho_j^\alpha v_j(\rho_jx),\qquad\varepsilon_j=\rho_j^{-p'},\qquad x\in\hat{\mathcal{W}}_{4\Gamma_j/\rho_j}.
\]
By \eqref{eq_exponents}, $-\Delta_p^HW_j=\varepsilon_jW_j^q$ and $\varepsilon_j^aW_j(x)=v_j(\rho_jx)\le C_0$. The domain bookkeeping will be used repeatedly: since $\rho_j\le S_j$ and $\Gamma_j/S_j\to\infty$, one has
\[
\frac{\Gamma_j}{\rho_j}\ge\frac{\Gamma_j}{S_j}\longrightarrow\infty,
\]
so every fixed compact subset of $\mathbb R^N$ is contained in $\hat{\mathcal{W}}_{4\Gamma_j/\rho_j}$ for all large $j$. By Lemma \ref{lem_bubble}(b),
\begin{equation}\label{eq_contact_value}
W_j(e_j)=\vartheta\rho_j^\alpha U(\rho_je_j)\to\vartheta\kappa_U.
\end{equation}
\emph{Step 2: punctured compactness and the pole.} On $\hat{\mathcal{W}}_1\setminus\{0\}$, $W_j<\vartheta\rho_j^\alpha U(\rho_jx)\le\vartheta\kappa_U\mu(x)$ by Lemma \ref{lem_bubble}(b). On compact subsets $\mathcal{K}$ of $\hat{\mathcal{W}}_1\setminus\{0\}$ the potential $\varepsilon_jW_j^{q-p+1}$ is bounded by $\varepsilon_j(\vartheta\kappa_U\max_{\mathcal{K}}\mu)^{p/a}\to 0$. Lemma \ref{lem_harnack}(iii) gives a subsequence with $W_j\to W$ in $C^1_{\rm loc}(\hat{\mathcal{W}}_1\setminus\{0\})$, where $\Delta_p^HW=0$ weakly and $0\le W\le\vartheta\kappa_U\mu$. For $0<r<1$, Lemma \ref{lem_flux} and the change of variables $y=\rho_jx$ give
\[
\int_{\partial\hat{\mathcal{W}}_r}A(\nabla W_j)\cdot\nu\,dS=-\int_{\hat{\mathcal{W}}_r}\varepsilon_jW_j^q\,dx=-\int_{\hat{\mathcal{W}}_{\rho_jr}}v_j^q\,dy\longrightarrow-\int_{\mathbb{R}^N}U^q=-M_U,
\]
by dominated convergence, since $v_j^q\le\vartheta^qU^q$ on $\hat{\mathcal{W}}_{\rho_j}$ and $v_j\to U$ pointwise. By $C^1$ convergence the flux of $W$ through $\partial\hat{\mathcal{W}}_r$ equals $-M_U\neq 0$. Hence $W\not\equiv 0$, and $W>0$ by the strong minimum principle. Theorem \ref{thm_pole} applies with $R=1$, $C_0=\vartheta\kappa_U$ and $\mathcal{F}=-M_U$. By Theorem \ref{thm_pole}(i) and Lemma \ref{lem_bubble}(b), the constant $\kappa$ equals $\kappa_U$, and
\begin{equation}\label{eq_pole_fc}
W-\kappa_U\mu\to\beta\qquad\text{as }x\to 0.
\end{equation}

\emph{Step 3: $\beta=0$.} Let $f_j(t)=\varepsilon_j(t^+)^q$ and $\mathcal{G}_j(t)=\varepsilon_j(t^+)^{p^*}/p^*$. Since $N/p^*=a$, we have $N\mathcal{G}_j(t)-a\,t\,f_j(t)=0$. Lemma \ref{lem_pohozaev} on the Wulff ball $\hat{\mathcal{W}}_r$, $0<r<1$, gives $\int_{\partial\hat{\mathcal{W}}_r}\mathbf{P}[W_j,\mathcal{G}_j]\cdot\nu\,dS=0$. On $\partial\hat{\mathcal{W}}_r$, $W_j\to W$ in $C^1$ and $\mathcal{G}_j(W_j)\to 0$ uniformly, so $\int_{\partial\hat{\mathcal{W}}_r}\mathbf{P}[W,0]\cdot\nu\,dS=0$. Theorem \ref{thm_pole}(iv) gives $a\beta\mathcal{F}=0$, hence $\beta=0$.

\emph{Step 4: $W=\kappa_U\mu$.} Let $\Phi=\kappa_U\mu$ and fix $\delta\in(0,1/2)$. By \eqref{eq_pole_fc} with $\beta=0$, $W/\mu\to\kappa_U$, so there is $r_\delta\in(0,1/2)$, with $r_\delta\to 0$ as $\delta\to 0$, such that $W>(1-\delta/2)\Phi$ on $\partial\hat{\mathcal{W}}_{r_\delta}$. By uniform convergence, $W_j>(1-\delta)\Phi$ on $\partial\hat{\mathcal{W}}_{r_\delta}$ for large $j$. Let $L_j=2\Gamma_j/\rho_j\to\infty$ and
\[
h_{j,\delta}=(1-\delta)\kappa_U\big(\mu-L_j^{-\alpha}\big)\qquad\text{on }\hat{\mathcal{W}}_{L_j}\setminus\overline{\hat{\mathcal{W}}_{r_\delta}}.
\]
This is a $C^1$ weak solution of $\Delta_p^Hh_{j,\delta}=0$ which vanishes on $\partial\hat{\mathcal{W}}_{L_j}$ and lies below $W_j$ on $\partial\hat{\mathcal{W}}_{r_\delta}$. Since $-\Delta_p^HW_j\ge 0$, Lemma \ref{lem_comparison} gives $W_j\ge h_{j,\delta}$. Letting $j\to\infty$ and then $\delta\to 0$ yields $W\ge\Phi$ in $\hat{\mathcal{W}}_1\setminus\{0\}$.

By Lemma \ref{lem_strongcomp} (with $\nabla\Phi\neq 0$), either $W\equiv\Phi$ or $W>\Phi$ in $\hat{\mathcal{W}}_1\setminus\{0\}$. Suppose $W>\Phi$ and let $g_0=\min_{\partial\hat{\mathcal{W}}_{1/2}}(W-\Phi)>0$. For $\delta\in(0,1)$ and small $\varepsilon>0$, the function $(1-\delta)\Phi+g_0/2$ is a solution below $W$ on $\partial\hat{\mathcal{W}}_{1/2}$, and also on $\partial\hat{\mathcal{W}}_\varepsilon$ because there $W-(1-\delta)\Phi-g_0/2\ge\delta\kappa_U\varepsilon^{-\alpha}-g_0/2>0$. Lemma \ref{lem_comparison} on $\hat{\mathcal{W}}_{1/2}\setminus\overline{\hat{\mathcal{W}}_\varepsilon}$, followed by $\varepsilon\to 0$ and $\delta\to 0$, gives $W\ge\Phi+g_0/2$ in $\hat{\mathcal{W}}_{1/2}\setminus\{0\}$. This contradicts \eqref{eq_pole_fc} with $\beta=0$. Hence
\begin{equation}\label{eq_exact_pole}
W=\kappa_U\mu\qquad\text{in }\hat{\mathcal{W}}_1\setminus\{0\}.
\end{equation}

\emph{Step 5: no concentration near the contact point.} Let $d\varpi_j=\varepsilon_jW_j^q\,dx$. Choose $r_1>0$ such that $B_{8r_1}(e)\subset\hat{\mathcal{W}}_2$ for every $e\in\partial\hat{\mathcal{W}}_1$. For large $j$, $\hat{\mathcal{W}}_2\subset\hat{\mathcal{W}}_{4\Gamma_j/\rho_j}$. For $0<r\le r_1$, Lemma \ref{lem_morrey}, applied to $W_j$ on $B_{4r}(e_j)$, and \eqref{eq_contact_value} give
\begin{equation}\label{eq_morrey}
\varpi_j(B_r(e_j))\le C r^{N-p}W_j(e_j)^{p-1}\le C_{\rm M}r^{N-p},
\end{equation}
with $C_{\rm M}$ independent of $j$. Together with $\varepsilon_j^aW_j\le C_0$, Lemma \ref{lem_no_concentration} yields numbers $r_*>0$ and $C_*>0$, independent of $j$, such that
\begin{equation}\label{eq_theta_bound}
\Theta_j:=\varepsilon_j^{1/p}W_j^{1/a}\le C_*
\qquad\text{on }B_{r_*/2}(e_j).
\end{equation}

\emph{Step 6: conclusion.} On $B_{r_*/2}(e_j)$, $W_j$ solves $-\Delta_p^HW_j=\Theta_j^pW_j^{p-1}$, and \eqref{eq_theta_bound} gives a uniform bound for the coefficient $\Theta_j^p$. By Lemma \ref{lem_harnack}(ii) and \eqref{eq_contact_value}, $W_j$ is bounded on $B_{r_*/8}(e_j)$, and Lemma \ref{lem_harnack}(iii) gives uniform $C^{1,\gamma}$ bounds on $B_{r_*/16}(e_j)$. Passing to a subsequence, $e_j\to e\in\partial\hat{\mathcal{W}}_1$ and $W_j(e_j+\cdot)\to\tilde W$ uniformly on $B_{r_*/32}$, with $\tilde W$ continuous. For $y\in B_{r_*/32}$ with $\hat H_0(e+y)<1$, the $C^1$ convergence $W_j\to W$ near $e+y$ and $e_j\to e$ give $\tilde W(y)=W(e+y)=\kappa_U\mu(e+y)$ by \eqref{eq_exact_pole}. Such $y$ accumulate at $0$ (take $y=-te$ with $t>0$ small), so $\tilde W(0)=\kappa_U\mu(e)=\kappa_U$. On the other hand $\tilde W(0)=\lim_jW_j(e_j)=\vartheta\kappa_U$ by \eqref{eq_contact_value}. Since $\vartheta>1$, this is a contradiction.
\end{proof}

\begin{proof}[Proof of Theorem \ref{thm_schoen}]
If $u\equiv0$, the estimate is immediate. Otherwise Lemma \ref{lem_locreg} makes $u$ positive in the connected Wulff ball, so it suffices to prove the estimate for positive solutions. By translation invariance we may take $x_0=0$. If the estimate fails, there are $R_j>0$ and positive weak solutions $u_j$ in $\mathcal{W}_{3R_j}$ satisfying \eqref{eq_contra}. They are continuous by Lemma \ref{lem_locreg}, so the supremum and infimum in Theorem \ref{thm_schoen} are the maximum and minimum over the closed Wulff balls. Let $x_j,t_j,\sigma_j,\delta_j,\Gamma_j,\Xi_j,v_j$ be given by Proposition \ref{prop_selection}. Put
\[
\eta_j=\Xi_j^{-1/(2(N-p))},\qquad\hat\Gamma_j=\eta_j\Gamma_j.
\]
Then $\eta_j\to 0$ and, by Proposition \ref{prop_selection}(i), $\hat\Gamma_j^{N-p}=\Xi_j^{-1/2}\Gamma_j^{N-p}\ge\Xi_j^{1/2}\to\infty$. Since $H_0\le c_H\hat H_0$, the ball $\hat{\mathcal{W}}_{4\Gamma_j/c_H}$ is contained in $\mathcal{W}_{4\Gamma_j}$, where $v_j$ is defined. Proposition \ref{prop_firstcontact}, applied with $\Gamma_j/c_H$ in place of $\Gamma_j$, $\vartheta=2$ and $S_j=2\hat\Gamma_j=o(\Gamma_j)$, gives $v_j\le 2U\le 2\kappa_U\mu$ in $\hat{\mathcal{W}}_{2\hat\Gamma_j}$ for large $j$, hence $v_j\le 2\kappa_U\hat\Gamma_j^{-\alpha}$ on $\partial\hat{\mathcal{W}}_{\hat\Gamma_j}$.

For $\hat H_0(y)=\hat\Gamma_j$ we have $\delta_jH_0(y)\le c_H\delta_j\hat\Gamma_j=c_H\eta_j\sigma_j$, so $H_0(x_j+\delta_jy)\le 2R_j-8\sigma_j+c_H\eta_j\sigma_j<2R_j$ for large $j$. By the definition of $m_j$,
\[
m_j\le t_j\,v_j(y)\le 2\kappa_Ut_j\hat\Gamma_j^{-\alpha}.
\]
Raising to the power $p-1$ and using $\alpha(p-1)=N-p$ and $\Gamma_j^{N-p}=\sigma_j^{N-p}t_j^p$,
\[
m_j^{p-1}\le C\,t_j^{p-1}\eta_j^{-(N-p)}\sigma_j^{-(N-p)}t_j^{-p}.
\]
Multiplying by $t_j\sigma_j^{N-p}$ gives $\Xi_j\le C\eta_j^{-(N-p)}=C\,\Xi_j^{1/2}$, which contradicts $\Xi_j\to\infty$.
\end{proof}

\section{Global integrability and the proof of Theorem \ref{thm_main}}\label{sec_global}

The integral bootstrap of this section adapts the argument of Ciraolo and Gatti \cite{CG25} for the Euclidean $p$-Laplacian. Anisotropy enters only through the structural constants and through the reflected norm in \eqref{eq_v}. In this section $u$ is a positive local weak solution of \eqref{eq_main}. By Lemma \ref{lem_locreg}, $u\in C^{1,\gamma}_{\rm loc}(\mathbb{R}^N)$. For every $R>0$, Lemma \ref{lem_comparison} applied to $u$ and the constant $\min_{\partial\mathcal{W}_R}u$ gives $\min_{\overline{\mathcal{W}_R}}u=\min_{\partial\mathcal{W}_R}u$.

\begin{lemma}\label{lem_decay}
$u$ is bounded, and there is $C\ge 1$ such that $C^{-1}R^{-\alpha}\le\min_{\overline{\mathcal{W}_R}}u\le CR^{-\alpha}$ for all $R\ge 1$.
\end{lemma}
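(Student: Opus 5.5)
The plan is to combine the Schoen-type estimate of Theorem \ref{thm_schoen} with elementary comparison arguments against the fundamental solution $\mu$. Write $m(R)=\min_{\overline{\mathcal{W}_R}}u=\min_{\partial\mathcal{W}_R}u$, which is nonincreasing in $R$.

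\emph{Lower bound.} Since $-\Delta_p^Hu\ge 0$, compare $u$ on the annulus $\hat{\mathcal{W}}_{L}\setminus\overline{\hat{\mathcal{W}}_{r_0}}$ with $h=\kappa(\mu-L^{-\alpha})$. Here $r_0$ is fixed and $\kappa>0$ is chosen so that $h\le u$ on $\partial\hat{\mathcal{W}}_{r_0}$, which is possible because $\min_{\partial\hat{\mathcal{W}}_{r_0}}u>0$. Lemma \ref{lem_comparison} gives $u\ge h$. Letting $L\to\infty$ yields $u\ge\kappa\mu$ outside $\hat{\mathcal{W}}_{r_0}$. Since $H_0\le c_H\hat H_0$ and $\hat H_0\le c_HH_0$, this gives $u\ge C^{-1}R^{-\alpha}$ on $\partial\mathcal{W}_R$ for $R\ge1$. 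Hence $m(R)\ge C^{-1}R^{-\alpha}$.

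\emph{Upper bound for the minimum.} Apply Theorem \ref{thm_schoen} on $\mathcal{W}_{3R}$ with $x_0=0$:
\[
\Big(\sup_{\mathcal{W}_R}u\Big)\,m(2R)^{p-1}\le CR^{p-N}.
\]
For $R\ge1$ we have $\sup_{\mathcal{W}_R}u\ge u(0)>0$, so $m(2R)^{p-1}\le Cu(0)^{-1}R^{p-N}$. Using $\alpha(p-1)=N-p$, this gives $m(2R)\le CR^{-\alpha}$, and monotonicity transfers the bound to $m(R)$.

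\emph{Boundedness.} Again by Theorem \ref{thm_schoen} and the lower bound,
\[
\sup_{\mathcal{W}_R}u\le CR^{p-N}m(2R)^{1-p}\le CR^{p-N}(2R)^{\alpha(p-1)}\le C
\]
for all $R\ge1$, with $C$ independent of $R$. Letting $R\to\infty$ shows that $u$ is bounded on $\mathbb{R}^N$.

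The main point is the lower bound through comparison on exhausting annuli. It needs the subtracted constant $L^{-\alpha}$ so that $h=0\le u$ on the outer boundary, and it uses the equivalence of $H_0$ and $\hat H_0$, because $\mu$ is radial only for $\hat H_0$. Everything else follows directly from Theorem \ref{thm_schoen}.
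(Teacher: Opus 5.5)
Your proposal is correct and follows the paper's proof essentially step for step. You get the lower bound by comparing with $\kappa(\mu-L^{-\alpha})$ on exhausting $\hat H_0$-annuli, and then apply Theorem \ref{thm_schoen} at $x_0=0$, combined with $u(0)>0$ for the upper bound on the minimum and with the lower bound for boundedness. The only detail to pin down is the choice $r_0\le 1/c_H$, so that $\partial\mathcal{W}_R$ with $R\ge 1$ lies outside $\hat{\mathcal{W}}_{r_0}$; the paper instead takes $r_0=1$ and bounds $u$ on $\hat{\mathcal{W}}_1$ from below by its positive minimum there.
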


\begin{proof}
Let $m_1=\min_{\partial\hat{\mathcal{W}}_1}u>0$. For $S>1$ the function $\psi_S=m_1(\mu-S^{-\alpha})/(1-S^{-\alpha})$ is a solution of $\Delta_p^H\psi_S=0$ in $\hat{\mathcal{W}}_S\setminus\overline{\hat{\mathcal{W}}_1}$ (Lemma \ref{lem_bubble}(c)) with $\psi_S=m_1$ on $\partial\hat{\mathcal{W}}_1$ and $\psi_S=0$ on $\partial\hat{\mathcal{W}}_S$. Lemma \ref{lem_comparison} gives $u\ge\psi_S$, and letting $S\to\infty$, $u\ge m_1\mu$ outside $\hat{\mathcal{W}}_1$. Since $\hat H_0\le c_HH_0$, we get $u\ge m_1(c_HR)^{-\alpha}$ on $\overline{\mathcal{W}_R}\setminus\hat{\mathcal{W}}_1$, while $u\ge\min_{\overline{\hat{\mathcal{W}}_1}}u>0$ on $\hat{\mathcal{W}}_1$. Hence $\min_{\overline{\mathcal{W}_R}}u\ge m_2R^{-\alpha}$ for $R\ge 1$, with $m_2>0$ independent of $R$.

By Theorem \ref{thm_schoen} with $x_0=0$ and $R\ge 1$,
\[
\max_{\overline{\mathcal{W}_R}}u\le CR^{p-N}\big(m_2(2R)^{-\alpha}\big)^{1-p}\le C',
\]
because $\alpha(p-1)=N-p$. So $u$ is bounded. Also by Theorem \ref{thm_schoen}, $(\min_{\overline{\mathcal{W}_{2R}}}u)^{p-1}\le CR^{p-N}/u(0)$, which gives the upper bound for $R\ge 2$ after replacing $2R$ by $R$. Enlarging $C$, it also holds for $1\le R\le 2$.
\end{proof}

Let $v=u^{-1/a}$. Since $u$ solves the critical equation for $H$, which is the reflection of $\hat H$, Lemma \ref{lem_transformed} applied to the norm $\hat H$ with $V=u$ and $\Omega=\mathbb{R}^N$ gives
\begin{equation}\label{eq_v}
\operatorname{div}\hat A(\nabla v)=\frac{b\,\hat H(\nabla v)^p+c}{v}\qquad\text{in }\mathcal{D}'(\mathbb{R}^N),
\end{equation}
and $v\in C^1(\mathbb{R}^N)$ is positive. Since $u=v^{-a}$, we have $u^{\gamma}=v^{-a\gamma}$, and $v^{-(b+1)}=u^{(b+1)/a}=u^{p^*-1}$.

\begin{proposition}\label{prop_small_exp}
For every $s<b$ there is $C_s$ with $\int_{\mathcal{W}_{2R}}v^{-s}\,dx\le C_sR^{N-p's}$ for all $R\ge 1$.
\end{proposition}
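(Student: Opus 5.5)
The plan is to rewrite the integral in terms of $u$ and apply the weak Harnack inequality of Lemma \ref{lem_harnack}(i), with the upper decay bound of Lemma \ref{lem_decay} providing the infimum. Since $v=u^{-1/a}$, we have $v^{-s}=u^{\gamma}$ with $\gamma=s/a$. The condition $s<b$ is exactly $\gamma<b/a=N(p-1)/(N-p)$, which is the admissible range in Lemma \ref{lem_harnack}(i). For $s\le 0$ the exponent $\gamma$ is nonpositive. In that case I would bound $u^\gamma\le(\min_{\overline{\mathcal{W}_{2R}}}u)^{\gamma}\le C R^{-\alpha\gamma}$ directly, using the lower bound of Lemma \ref{lem_decay}, so the real content is the range $0<s<b$.

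For $0<s<b$, the function $u$ is a positive weak supersolution, $-\Delta_p^Hu=u^{p^*-1}\ge 0$, in all of $\mathbb{R}^N$. By Lemma \ref{lem_norm}(a) there is a structural $\Lambda_*\ge 1$ with $\mathcal{W}_{2R}\subset B_{\Lambda_*R}$ and $B_{\Lambda_*R}\subset\mathcal{W}_{\Lambda_*^2R}$. I apply Lemma \ref{lem_harnack}(i) with $r=\Lambda_*R$, centered at $0$; this is possible since $u$ is a global supersolution. This gives
\[
\int_{\mathcal{W}_{2R}}u^\gamma\,dx\le\int_{B_{2\Lambda_*R}}u^\gamma\,dx\le C R^N\Big(\inf_{B_{\Lambda_*R}}u\Big)^\gamma\le CR^N\Big(\min_{\overline{\mathcal{W}_{\Lambda_*^2R}}}u\Big)^\gamma,
\]
with $C$ depending only on $N,p,H,\gamma$.

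By the upper bound in Lemma \ref{lem_decay}, applied at radius $\Lambda_*^2R\ge 1$, we get $\min_{\overline{\mathcal{W}_{\Lambda_*^2R}}}u\le C R^{-\alpha}$. Hence
\[
\int_{\mathcal{W}_{2R}}v^{-s}\,dx\le C_sR^{N-\alpha s/a}.
\]
Since $\alpha/a=p'$ by \eqref{eq_exponents}, this is exactly $C_sR^{N-p's}$.

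The only genuine obstacle is the upper bound $\min_{\overline{\mathcal{W}_R}}u\le CR^{-\alpha}$. It is nontrivial because it comes from the Schoen-type estimate of Theorem \ref{thm_schoen}, but it is already recorded in Lemma \ref{lem_decay}. Apart from that, the proof only requires matching Euclidean balls with Wulff balls through structural constants. This matching is also why one should not try a direct Caccioppoli argument on \eqref{eq_v}: testing with $\eta^\theta v^{1-s}$ and using only the pointwise bound $v\le CR^{p'}$ loses a factor $R^{p'}$. The weak Harnack inequality, by contrast, transfers the sharp decay of the infimum to the integral.
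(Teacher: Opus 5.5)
Your strategy is the paper's. You write $v^{-s}=u^{s/a}$, handle $s\le 0$ with the lower bound of Lemma \ref{lem_decay}, and for $0<s<b$ combine the weak Harnack inequality for the supersolution $u$ with the upper bound $\min_{\overline{\mathcal{W}_R}}u\le CR^{-\alpha}$. Working directly at scale $R$ instead of rescaling to $u_R(x)=R^\alpha u(Rx)$ is harmless, because the constant in Lemma \ref{lem_harnack}(i) does not depend on $r$.

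One step, however, is wrong as written: the inequality $\inf_{B_{\Lambda_*R}}u\le\min_{\overline{\mathcal{W}_{\Lambda_*^2R}}}u$. Since $B_{\Lambda_*R}\subset\mathcal{W}_{\Lambda_*^2R}$, the infimum over the smaller set is the larger one. So this inclusion gives $\inf_{B_{\Lambda_*R}}u\ge\min_{\overline{\mathcal{W}_{\Lambda_*^2R}}}u$, the opposite of what you need. For instance, when $H$ is Euclidean and $u$ is a bubble centered at the origin, $u$ is strictly radially decreasing and your inequality fails strictly.

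To bound the Euclidean infimum from above, you need a Wulff ball inside the Euclidean ball, not the other way round. You already stated the right inclusion, $\mathcal{W}_{2R}\subset B_{\Lambda_*R}$. It gives $\inf_{B_{\Lambda_*R}}u\le\inf_{\mathcal{W}_{2R}}u=\min_{\overline{\mathcal{W}_{2R}}}u\le C(2R)^{-\alpha}$, using continuity and Lemma \ref{lem_decay} (valid since $2R\ge 1$). This is exactly the paper's step, where $\mathcal{W}_2\subset B_{\rho_0}$ yields $\inf_{B_{\rho_0}}u_R\le R^\alpha\min_{\overline{\mathcal{W}_{2R}}}u$. With this replacement the inclusion $B_{\Lambda_*R}\subset\mathcal{W}_{\Lambda_*^2R}$ is not needed, and the rest of your argument goes through unchanged: the case $s\le 0$, the range of $\gamma$, and $\alpha/a=p'$.
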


\begin{proof}
If $s\le 0$, Lemma \ref{lem_decay} gives $u\ge C^{-1}(2R)^{-\alpha}$ on $\mathcal{W}_{2R}$, so $v^{-s}=u^{s/a}\le CR^{-\alpha s/a}=CR^{-p's}$ there, and the claim follows since $|\mathcal{W}_{2R}|=CR^N$.

Let $0<s<b$ and $\gamma=s/a$, so $0<\gamma<b/a=N(p-1)/(N-p)$. The function $u_R(x)=R^\alpha u(Rx)$ is a nonnegative weak supersolution in $\mathbb{R}^N$. Let $\rho_0=2M_H$, so that $\mathcal{W}_2\subset B_{\rho_0}$ by Lemma \ref{lem_norm}(a). Lemma \ref{lem_harnack}(i) and Lemma \ref{lem_decay} give
\[
\int_{\mathcal{W}_2}u_R^\gamma\le\int_{B_{2\rho_0}}u_R^\gamma\le C\Big(\inf_{B_{\rho_0}}u_R\Big)^\gamma\le C\Big(R^\alpha\min_{\overline{\mathcal{W}_{2R}}}u\Big)^\gamma\le C.
\]
Changing variables, $\int_{\mathcal{W}_{2R}}u^\gamma=R^{N-\alpha\gamma}\int_{\mathcal{W}_2}u_R^\gamma\le CR^{N-p's}$, because $\alpha/a=p'$.
\end{proof}

\begin{proposition}\label{prop_Lpstar}
$u\in L^{p^*-1}(\mathbb{R}^N)$.
\end{proposition}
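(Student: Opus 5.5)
The plan is to run one more step of the integral bootstrap on the transformed equation \eqref{eq_v}: test it with $v^{-s}\eta^p$ for an exponent $s$ slightly below $b$, and feed the right-hand side into Proposition \ref{prop_small_exp}. Since $u^{p^*-1}=v^{-(b+1)}$ and $u$ is bounded by Lemma \ref{lem_decay}, it is enough to show $\int_{\mathbb{R}^N}v^{-(s+1)}\,dx<\infty$ for some $s+1<b+1$. Indeed, then
\[
u^{p^*-1}=u^{(b+1)/a}\le \|u\|_\infty^{(b-s)/a}\,u^{(s+1)/a}=\|u\|_\infty^{(b-s)/a}\,v^{-(s+1)}.
\]

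\emph{Step 1: the Caccioppoli-type inequality.} Fix $s$ with $\max\{b-1,0\}<s<b$. Take a cutoff $\eta\in C^1_c(\mathcal{W}_{2R})$ with $0\le\eta\le1$, $\eta=1$ on $\mathcal{W}_R$ and $|\nabla\eta|\le C/R$. The function $v^{-s}\eta^p$ is $C^1$ with compact support, because $v\in C^1$ is positive, so it is admissible in \eqref{eq_v}. Using $\hat A(\nabla v)\cdot\nabla v=\hat H(\nabla v)^p$, the tested equation reads
\[
(b-s)\int\hat H(\nabla v)^pv^{-s-1}\eta^p+c\int v^{-s-1}\eta^p=-p\int\eta^{p-1}v^{-s}\,\hat A(\nabla v)\cdot\nabla\eta .
\]
Since $s<b$, both terms on the left are nonnegative. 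For the right-hand side I use $|\hat A(\nabla v)|\le C\hat H(\nabla v)^{p-1}$ from Lemma \ref{lem_norm}(a),(d), and then Young's inequality with exponents $p'$ and $p$, splitting $v^{-s}=v^{-(s+1)/p'}\cdot v^{-s+(s+1)/p'}$. Half of the gradient term is absorbed into the left side, and this leaves
\[
c\int_{\mathcal{W}_R}v^{-(s+1)}\le C\int|\nabla\eta|^p\,v^{p-1-s}\le CR^{-p}\int_{\mathcal{W}_{2R}}v^{-(s+1-p)} .
\]

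\emph{Step 2: apply Proposition \ref{prop_small_exp}.} The exponent $s'=s+1-p$ satisfies $s'<b$, since $s<b$ and $p>1$. It may well be negative, which that proposition allows. So for $R\ge1$,
\[
\int_{\mathcal{W}_R}v^{-(s+1)}\le CR^{N-p-p'(s+1-p)}=CR^{N-p'(s+1)}=CR^{p'(b-1-s)},
\]
using $p(p'-1)=p'$ and $N=p'b$.

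\emph{Step 3: conclusion.} Because $s>b-1$, the exponent $p'(b-1-s)$ is negative. Letting $R\to\infty$ and using monotone convergence gives $\int_{\mathbb{R}^N}v^{-(s+1)}<\infty$, and the boundedness of $u$ then yields $u\in L^{p^*-1}(\mathbb{R}^N)$.

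The main obstacle is the exponent bookkeeping: the absorption in Step 1 requires the strict inequality $s<b$, so $s=b$ itself cannot be used. This is why the argument aims at the exponent $s+1\in(b,b+1)$, where the decay exponent $p'(b-1-s)$ is already negative, and then gets the exponent $b+1$ for free from the boundedness of $u$ in Lemma \ref{lem_decay}.
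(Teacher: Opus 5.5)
There is a genuine gap, and it is in Step 2. Your Step 1 is correct; it is the paper's inequality \eqref{eq_step1} with the exponent shifted by one. The power of $R$ in Step 2, however, is wrong. With $s'=s+1-p$, Proposition \ref{prop_small_exp} gives
\[
R^{-p}\int_{\mathcal{W}_{2R}}v^{-(s+1-p)}\,dx\le CR^{N-p-p'(s+1-p)}=CR^{N-p's}=CR^{p'(b-s)},
\]
because $pp'-p=p'$. You dropped this $+p'$. For every admissible $s<b$ the exponent $p'(b-s)$ is positive, so the bound grows with $R$ and Step 3 fails. For instance, with $N=3$, $p=2$, $s=1$ the right-hand side is $R^{-2}|\mathcal{W}_{2R}|\sim R$, not $R^{-1}$.

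Better bookkeeping will not fix this, because the Caccioppoli inequality with $\eta^p$ is intrinsically lossy here. On $\mathcal{W}_{2R}\setminus\mathcal{W}_R$ one has $v\approx R^{p'}$. There the error integrand $R^{-p}v^{p-1-s}$ equals $R^{pp'-p}=R^{p'}$ times the integrand $v^{-1-s}$ you want to control. Already for $u=U$, where $v=1+K\hat H_0^{p'}$, the quantity $R^{-p}\int_{\mathcal{W}_{2R}}v^{p-1-s}$ is of exact order $R^{p'(b-s)}\to\infty$. So no choice of $s<b$ yields a bounded quantity, and for $s\ge b$ the absorption is unavailable.

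The missing idea is the paper's Step 2: work exactly at the borderline exponent, where absorption is impossible but also unnecessary. Test \eqref{eq_v} with $v^{-b}\eta_R$, using the first power of the cutoff. The $\hat H(\nabla v)^p$ terms then cancel identically, and you are left with
\[
c\int_{\mathcal{W}_R}v^{-b-1}\le\frac CR\int_{\mathcal{W}_{2R}}v^{-b}\hat H(\nabla v)^{p-1},
\]
which carries only a single factor $R^{-1}$ from the cutoff. H\"older's inequality with exponents $p'$ and $p$ bounds the right-hand side by
\[
\frac CR\Big(\int_{\mathcal{W}_{2R}}v^{-(b+1-\varepsilon)}\hat H(\nabla v)^p\Big)^{1/p'}\Big(\int_{\mathcal{W}_{2R}}v^{-s_2}\Big)^{1/p},\qquad s_2=b-(1-\varepsilon)(p-1)<b.
\]
The first factor is $O(R^{\varepsilon})$ by your Step 1 at $s=b-\varepsilon$, with the corrected exponent $p'(b-s)$ and keeping the gradient term on the left. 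The second factor is $O(R^{1-\varepsilon})$ by Proposition \ref{prop_small_exp}. The powers of $R$ cancel exactly, which gives $\int_{\mathcal{W}_R}u^{p^*-1}\le C$ uniformly in $R$. This works directly at the exponent $b+1$, so you do not need the detour through a smaller exponent and $\|u\|_\infty$.
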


\begin{proof}
Fix $\psi\in C^\infty([0,\infty))$ with $\psi=1$ on $[0,1]$, $\psi=0$ on $[2,\infty)$ and $-2\le\psi'\le 0$, and let $\eta_R(x)=\psi(H_0(x)/R)$. Then $\eta_R\in C^1_c(\mathbb{R}^N)$ and, by Lemma \ref{lem_norm}(a), $H(\pm\nabla\eta_R)+\hat H(\pm\nabla\eta_R)\le C/R$ with $C$ structural.

\emph{Step 1.} Let $s<b+1$ and $R\ge 1$. Test \eqref{eq_v} with $v^{1-s}\eta_R^p$. Using $\hat A(\nabla v)\cdot\nabla v=\hat H(\nabla v)^p$ we get
\[
(b+1-s)\int v^{-s}\hat H(\nabla v)^p\eta_R^p+c\int v^{-s}\eta_R^p=-p\int v^{1-s}\eta_R^{p-1}\hat A(\nabla v)\cdot\nabla\eta_R.
\]
By Lemma \ref{lem_norm}(a),(c) for $\hat H$, the right-hand side is at most $p\int v^{1-s}\eta_R^{p-1}\hat H(\nabla v)^{p-1}\hat H(-\nabla\eta_R)$. By Young's inequality it is bounded by $\frac{b+1-s}2\int v^{-s}\hat H(\nabla v)^p\eta_R^p+C\int v^{p-s}\hat H(-\nabla\eta_R)^p$. Since $s-p<b$, Proposition \ref{prop_small_exp} gives
\begin{equation}\label{eq_step1}
\int_{\mathcal{W}_R}v^{-s}\hat H(\nabla v)^p+\int_{\mathcal{W}_R}v^{-s}\le CR^{-p}\int_{\mathcal{W}_{2R}}v^{p-s}\le CR^{N-p'(s-1)},
\end{equation}
where we used $p'p-p=p'$.

\emph{Step 2.} Test \eqref{eq_v} with $v^{-b}\eta_R$. The terms containing $\hat H(\nabla v)^p$ cancel, and we obtain
\[
c\int_{\mathcal{W}_R}v^{-b-1}\le-\int v^{-b}\hat A(\nabla v)\cdot\nabla\eta_R\le\frac CR\int_{\mathcal{W}_{2R}}v^{-b}\hat H(\nabla v)^{p-1}.
\]
Fix $\varepsilon\in(0,1)$. By H\"older's inequality with exponents $p/(p-1)$ and $p$,
\[
\int_{\mathcal{W}_{2R}}v^{-b}\hat H(\nabla v)^{p-1}\le\Big(\int_{\mathcal{W}_{2R}}v^{-(b+1-\varepsilon)}\hat H(\nabla v)^p\Big)^{\frac{p-1}p}\Big(\int_{\mathcal{W}_{2R}}v^{-s_2}\Big)^{\frac1p},
\]
with $s_2=b-(1-\varepsilon)(p-1)<b$. By \eqref{eq_step1} with $s=b+1-\varepsilon$ and radius $2R$, the first factor is at most $(CR^{N-p'(b-\varepsilon)})^{(p-1)/p}=CR^{\varepsilon}$, since $p'b=N$. By Proposition \ref{prop_small_exp}, the second factor is at most $(CR^{N-p's_2})^{1/p}=CR^{1-\varepsilon}$. Hence $\int_{\mathcal{W}_R}v^{-b-1}\le C$ with $C$ independent of $R\ge 1$. Letting $R\to\infty$ and using $v^{-b-1}=u^{p^*-1}$ gives the claim.
\end{proof}

\begin{proposition}\label{prop_energy}
$u\in D^{1,p}(\mathbb{R}^N)=\{u\in L^{p^*}(\mathbb{R}^N):\nabla u\in L^p(\mathbb{R}^N)\}$.
\end{proposition}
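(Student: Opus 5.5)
We obtain both integrability statements from the estimate \eqref{eq_step1} in the proof of Proposition \ref{prop_Lpstar}, written in terms of $u$, together with the decay in Lemma \ref{lem_decay}. The goal is to show that $\int_{\mathcal{W}_R}u^{p^*}$ and $\int_{\mathcal{W}_R}H(\nabla u)^p$ are bounded independently of $R\ge 1$.

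\emph{The gradient.} Since $\nabla v=-a^{-1}u^{-1/a-1}\nabla u$, we have
\[
v^{-s}\hat H(\nabla v)^p=a^{-p}u^{s/a-p/a-p}H(\nabla u)^p,
\]
because $\hat H(-\xi)=H(\xi)$. To get the weight $u^0$ we need $s/a=p(1+1/a)$, that is, $s=p(a+1)=N$. This value is not below $b+1$, so \eqref{eq_step1} cannot be used directly. Instead we test the original equation \eqref{eq_main} with $u\eta_R^p$. This gives
\[
\int H(\nabla u)^p\eta_R^p\le\int u^{p^*}\eta_R^p+p\int u\,\eta_R^{p-1}H(\nabla u)^{p-1}H(-\nabla\eta_R).
\]
Young's inequality absorbs the last term up to $C R^{-p}\int_{\mathcal{W}_{2R}}u^p$. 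That leaves two terms to control.

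\emph{The term $\int u^{p^*}$.} By Lemma \ref{lem_decay}, $u$ is bounded, so $u^{p^*}\le\|u\|_\infty u^{p^*-1}$. This term is therefore bounded by Proposition \ref{prop_Lpstar}. This also gives $u\in L^{p^*}(\mathbb{R}^N)$.

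\emph{The term $R^{-p}\int_{\mathcal{W}_{2R}}u^p$.} This is the main obstacle. It equals $R^{-p}\int v^{-ap}$, and $ap=N-p$. When $N-p<b$, that is $p>\ldots$, Proposition \ref{prop_small_exp} bounds it by $CR^{-p}R^{N-p'(N-p)}=CR^{N-p-p'(N-p)}=CR^{-(N-p)/(p-1)}$, which tends to $0$. In general $ap$ may exceed $b$, and then we use the improved integrability instead. By Proposition \ref{prop_Lpstar}, $u^{p^*-1}\in L^1$, and $u$ is bounded with $u\le CR^{-\alpha}$ only on average. So first we upgrade the pointwise decay. Since $u^{p^*-1}\in L^1$, Lemma \ref{lem_morrey} is not needed. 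We use Theorem \ref{thm_schoen}: $\max_{\mathcal{W}_R}u\le CR^{p-N}(\min_{\mathcal{W}_{2R}}u)^{1-p}$. Feeding the lower bound $\min u\ge C^{-1}R^{-\alpha}$ into this only gives boundedness. So we argue instead with the Harnack inequality on annuli. Since $u$ is a positive supersolution with potential $u^{p^*-p}$, and $\int_{\mathcal{W}_{4R}\setminus\mathcal{W}_{R/2}}u^{p^*}\to0$, a rescaled version of Lemma \ref{lem_locreg} (the Brezis--Kato smallness) gives a Harnack inequality with uniform constant on dyadic annuli for large $R$. Hence $\max_{\partial\mathcal{W}_R}u\le C\min_{\partial\mathcal{W}_R}u\le CR^{-\alpha}$ by Lemma \ref{lem_decay}.

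\emph{Conclusion.} With $u\le CH_0^{-\alpha}$ at infinity we get
\[
R^{-p}\int_{\mathcal{W}_{2R}}u^p\le CR^{-p}\Big(1+\int_1^{2R}t^{N-1-\alpha p}\,dt\Big).
\]
Here $N-\alpha p=N-p(N-p)/(p-1)$, and the whole expression is $\le C(R^{-p}+R^{N-p-\alpha p})$ (up to a logarithm). This is bounded because $\alpha p-(N-p)=\alpha>0$. Letting $R\to\infty$ gives $\nabla u\in L^p$, and together with $u\in L^{p^*}$ this proves $u\in D^{1,p}(\mathbb{R}^N)$.
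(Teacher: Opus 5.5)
Your skeleton matches the paper's proof. You test \eqref{eq_main} with $u\eta_R^p$, absorb with Young's inequality, and bound $\int u^{p^*}$ by $\|u\|_\infty\int u^{p^*-1}$ using Lemma \ref{lem_decay} and Proposition \ref{prop_Lpstar}. You diverge only on the cutoff term $R^{-p}\int_{\mathcal{W}_{2R}}u^p$, which you call the main obstacle.

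It is not an obstacle. You have just shown $u\in L^{p^*}(\mathbb{R}^N)$, and H\"older's inequality with $1-p/p^*=p/N$ gives
\[
R^{-p}\int_{\mathcal{W}_{2R}}u^p\le R^{-p}\Big(\int_{\mathbb{R}^N}u^{p^*}\Big)^{p/p^*}|\mathcal{W}_{2R}|^{p/N}=C\Big(\int_{\mathbb{R}^N}u^{p^*}\Big)^{p/p^*},
\]
uniformly in $R$. This one line is exactly how the paper concludes.

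Your detour does work. You treat the case $p>\sqrt N$ by Proposition \ref{prop_small_exp}. In general you upgrade Lemma \ref{lem_decay} to the pointwise bound $u\le CH_0^{-\alpha}$ via a Harnack inequality on Wulff annuli:
\begin{itemize}
\item the $L^{N/p}$ norm of the potential $u^{p^*-p}$ is scale invariant and small on far annuli;
\item the quantitative Brezis--Kato estimate \eqref{eq_bk}, followed by Serrin's local boundedness, makes the rescaled potential uniformly bounded;
\item Lemma \ref{lem_harnack}(ii) and a chaining argument then give $\max_{\partial\mathcal{W}_R}u\le C\min_{\partial\mathcal{W}_R}u$.
\end{itemize}
Your exponent count $N-p-\alpha p=-\alpha$ is also correct.

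This route buys the sharp pointwise decay, which is more than the proposition needs. But the annular Harnack step is only sketched. It also rests on $u\in L^{p^*}$, since that is where the smallness of $\int u^{p^*}$ on annuli comes from. That same fact finishes the proof immediately by H\"older.

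A few statements also need correcting:
\begin{itemize}
\item Lemma \ref{lem_decay} bounds $\min_{\overline{\mathcal{W}_R}}u$; it is not a bound \emph{on average}.
\item The two-sided Harnack inequality needs $u$ to be a solution, not merely a supersolution.
\item The condition you left blank is $p>\sqrt N$.
\end{itemize}
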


\begin{proof}
By Lemma \ref{lem_decay} and Proposition \ref{prop_Lpstar}, $u^{p^*}\le\|u\|_\infty u^{p^*-1}\in L^1(\mathbb{R}^N)$. Test \eqref{eq_main} with $u\eta_R^p$, where $\eta_R$ is as in the proof of Proposition \ref{prop_Lpstar}; this is admissible by the standard local density extension of the weak formulation. As in Step 1 of that proof,
\[
\int\eta_R^pH(\nabla u)^p\le 2\int u^{p^*}+C\int u^pH(-\nabla\eta_R)^p\le 2\int u^{p^*}+\frac{C}{R^p}\Big(\int_{\mathbb{R}^N}u^{p^*}\Big)^{p/p^*}|\mathcal{W}_{2R}|^{p/N},
\]
by H\"older's inequality, since $1-p/p^*=p/N$. The right-hand side is bounded independently of $R$, and letting $R\to\infty$ gives $\nabla u\in L^p(\mathbb{R}^N)$.
\end{proof}

\begin{proof}[Proof of Theorem \ref{thm_main}]
Let $u$ be a local weak solution of \eqref{eq_main}. By Lemma \ref{lem_locreg}, either $u\equiv 0$ or $u>0$. In the second case $u\in D^{1,p}(\mathbb{R}^N)$ by Proposition \ref{prop_energy}. Since $u^{p^*-1}\in L^{(p^*)'}(\mathbb{R}^N)$, the weak formulation extends by density to test functions in $D^{1,p}(\mathbb{R}^N)$, and $u$ is a solution of problem (1.8) of \cite{CFR20} with $\Sigma=\mathbb{R}^N$. We verify explicitly that the anisotropy falls under the hypotheses of \cite[Theorem 1.1]{CFR20}. Indeed,
\[
D^2\!\left(\frac{H^2}{2}\right)=H D^2H+\nabla H\otimes\nabla H,
\]
so \eqref{eq_unif_convex} is precisely the uniform positive-definiteness required there. It also yields the global $C^{1,1}$ regularity of $H^2$ required in \cite{CFR20}: indeed $F=H^2/2$ is $2$-homogeneous, hence $F(\xi)=O(|\xi|^2)$ and $\nabla F(\xi)=O(|\xi|)$, so $F$ extends as a $C^1$ function at the origin with $\nabla F(0)=0$; moreover $\|D^2F\|\le\Lambda$ on $\mathbb R^N\setminus\{0\}$ by \eqref{eq_unif_convex}. Integrating $D^2F$ along line segments (splitting at the origin if a segment passes through it) gives $|\nabla F(\xi)-\nabla F(\eta)|\le\Lambda|\xi-\eta|$. Thus $H^2\in C^{1,1}(\mathbb R^N)$, while the required $C^2$ regularity away from the origin and positive one-homogeneity of $H$ are already part of Hypothesis~(H). Consequently \cite[Theorem 1.1]{CFR20} gives $u(x)=U^H_{\lambda,x_0}(x)$ in the notation of \cite[(1.6)]{CFR20}. The norm $\hat H_0$ appearing there is our $\hat H_0$, so $U^H_{\lambda,x_0}$ has the form $\varsigma(\tau+\hat H_0(x-x_0)^{p'})^{-a}$ with $\tau>0$. The functions $U_{z,\lambda}$ of \eqref{eq_bubbles} have the same form, with $\tau=\lambda^{p'}/K$ taking every positive value. For fixed $\tau$ and $z$, exactly one $\varsigma>0$ gives a solution, because the left-hand side of \eqref{eq_main} is homogeneous of degree $p-1$ in $\varsigma$ and the right-hand side of degree $p^*-1\neq p-1$. Hence $u=U_{z,\lambda}$ for some $z$ and $\lambda$.
\end{proof}

\section*{Declarations}

\smallskip
\noindent\textbf{Conflict of interest.} The author declares that he has no conflict of interest.

\smallskip
\noindent\textbf{Data availability.} Data sharing is not applicable to this article, as no datasets were generated or analyzed during the current study.

\end{document}